\documentclass[12pt,reqno]{amsart}
\usepackage[normalem]{ulem}

\usepackage{multirow}
\usepackage{hhline}
\usepackage{float}
\usepackage{multirow}
\usepackage {multicol}
\usepackage{mathtools}
\usepackage{times}
\usepackage[T1]{fontenc}
\usepackage{mathrsfs}
\usepackage{latexsym}
\usepackage[dvips]{graphics}
\usepackage[titletoc, title]{appendix}
\usepackage{amsmath,amsfonts,amsthm,amssymb,amscd}
\usepackage[dvipsnames]{xcolor}
\usepackage{hyperref}
\usepackage{amsmath}
\usepackage[utf8]{inputenc}

\usepackage{color}
\usepackage{breakurl}

\usepackage{comment}
\newcommand{\bburl}[1]{\textcolor{blue}{\url{#1}}}

\usepackage{caption}

\newtheorem{thm}{Theorem}[section]

\newtheorem{cor}[thm]{Corollary}
\newtheorem{claim}[thm]{Claim}
\newtheorem{lem}[thm]{Lemma}
\newtheorem{prop}[thm]{Proposition}
\newtheorem{exa}[thm]{Example}

\newtheorem{defi}[thm]{Definition}
\newtheorem{rek}[thm]{Remark}

\usepackage[utf8]{inputenc}

\DeclareFixedFont{\ttb}{T1}{txtt}{bx}{n}{12} 
\DeclareFixedFont{\ttm}{T1}{txtt}{m}{n}{12}  

\usepackage{color}
\definecolor{deepblue}{rgb}{0,0,0.5}
\definecolor{deepred}{rgb}{0.6,0,0}
\definecolor{deepgreen}{rgb}{0,0.5,0}

\usepackage{listings}

\definecolor{ao}{rgb}{0.0, 0.5, 0.0}

\numberwithin{equation}{section}

\begin{document}

\title{Generalizing a Pair of Diophantine Equations}

\author[H. V. Chu]{H\`ung Vi\d{\^e}t Chu}
\email{\textcolor{blue}{\href{mailto:hchu@wlu.edu}{hchu@wlu.edu}}}
\address{Department of Mathematics, Washington and Lee University, Lexington, VA 24450, USA}

\author[D. Kim]{Dongho Kim}
\email{\textcolor{blue}{\href{mailto:dkim@as.edu.tw}{dkim@as.edu.tw}}}
\address{Institute of Mathematics, Academia Sinica, Taipei, Taiwan}  

\author[T. Koch]{Theodore Koch}
\email{\textcolor{blue}{\href{mailto:theokoch@umich.edu}{theokoch@umich.edu}}}
\address{Department of Mathematics, University of Michigan, Ann Arbor, MI 48109, USA}

\author[S. J. Miller]{Steven J. Miller}
\email{\textcolor{blue}{\href{mailto:sjm1@williams.edu}{sjm1@williams.edu}},
\textcolor{blue}{\href{mailto:Steven.Miller.MC.96@aya.yale.edu}{Steven.Miller.MC.96@aya.yale.edu}}}
\address{Department of Mathematics, Williams College, Williamstown, MA 01267, USA}

\author[A. V. Nguyen]{Anh Vi\d{\^{e}}t Nguy\~{\^e}n}
\email{\textcolor{blue}{\href{mailto:navgoingcollege2023@tamu.edu}{navgoingcollege2023@tamu.edu}}}
\address{Department of Mathematics, Texas A\&M University, College Station, TX 77840, USA}

\thanks{This work was partially supported by the National Science Foundation DMS2341670. We thank the
participants at Polymath Jr. 2026 for helpful discussions.}

\subjclass[2020]{11D04 (primary); 11B39 (secondary)}

\keywords{Diophantine equation; Fibonacci numbers; solvability.}

\maketitle

\begin{abstract}
For coprime integers $a$ and $b$, it is known that exactly one of the two Diophantine equations
$$
ax+by\ =\ \frac{(a-1)(b-1)}{2}
\qquad\text{and}\qquad
1+ax+by\ =\ \frac{(a-1)(b-1)}{2}
$$
admits a nonnegative integer solution, and that this solution is unique. We first generalize this result by replacing the right-hand side with an arbitrary integer $m$ and its complement $ab-a-b-m$. This framework enables us to study the existence and uniqueness of nonnegative integer solutions to
$$
ax+by\ =\ \frac{(a-1)(b-1)}{k}
\qquad\text{and}\qquad
1+ax+by\ =\ \frac{(a-1)(b-1)}{k},
$$
where $k$ is a fixed positive integer. We then obtain explicit results when $a$ and $b$ are consecutive Fibonacci numbers. Finally, we examine the original pair of equations in several particular settings, including when $b\equiv \pm1\mod a$, when $b$ is replaced by a higher power, and when the parameters are squared.
\end{abstract}

\tableofcontents

\section{Introduction}

For prime numbers $p$ and $q$, the two Diophantine equations 
\begin{equation}px + qy \ =\ \frac{(p-1)(q-1)}{2}\mbox{ and }1 + px + qy \ =\ \frac{(p-1)(q-1)}{2}\end{equation}
appeared in Beiter's study of cyclotomic polynomials \cite{Be}, which used the property that exactly one of the two equations has a nonnegative integral solution, and furthermore, the solution is unique. Chu \cite{C1} observed that the property holds for every pair of relatively prime $a$ and $b$.

\begin{thm}\cite[Theorem 1.1]{C1}\label{orithm}
Let $a, b\in \mathbb{N}$ be relatively prime. Exactly one of the two equations
\begin{align}
\label{ee1}ax + by &\ =\
\frac{(a - 1)(b - 1)}{2},\\
1 + ax + by &\  =\
\label{ee2}\frac{(a - 1)(b - 1)}{2},
\end{align}
has a nonnegative integral solution, and the solution is unique.
\end{thm}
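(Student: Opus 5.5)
We would prove Theorem \ref{orithm} by deducing it from the classical Sylvester--Frobenius symmetry for the numerical semigroup generated by $a$ and $b$. Set $N=ab-a-b$, the Frobenius number, and call an integer $n$ \emph{representable} if $n=ax+by$ for some nonnegative integers $x,y$. The key fact is that for every integer $m$ with $0\le m\le N$, exactly one of $m$ and $N-m$ is representable. Now observe that $\frac{(a-1)(b-1)}{2}=\frac{N+1}{2}$. Hence equation \eqref{ee1} asks whether $m=\frac{N+1}{2}$ is representable, and equation \eqref{ee2} asks whether $m-1=\frac{N-1}{2}=N-m$ is representable. So the symmetry gives ``exactly one'' immediately.

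One preliminary remark concerns integrality. If both $a,b$ are even they are not coprime. So either one of $a-1,b-1$ is even and $\frac{(a-1)(b-1)}{2}$ is an integer, or both $a,b$ are even, which is impossible. The degenerate case $a=1$ or $b=1$ gives right-hand side $0$. Then \eqref{ee1} has the unique solution $(0,0)$ and \eqref{ee2} has none, so we may assume $a,b\ge 2$ (which also gives $m\ge 0$).

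The main step is the symmetry lemma, which we would prove as follows. Since $\gcd(a,b)=1$, every integer $n$ has a unique representation $n=ax+by$ with $0\le x\le b-1$ (choose $x\equiv n a^{-1}\pmod b$). Moreover, $n$ is representable if and only if the corresponding $y$ is $\ge 0$: any other representation has $x'=x+tb$ with $t\ge 0$ and $y'=y-ta\le y$. Write $m=ax+by$ in this normalized form. Then
$$N-m=a(b-1-x)+b(-1-y),$$
and $0\le b-1-x\le b-1$, so this is the normalized form of $N-m$. Exactly one of $y\ge 0$ and $-1-y\ge 0$ holds, which proves the lemma, and indeed it holds for all integers $m$.

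Uniqueness also follows from the normalized form. Any nonnegative solution of $ax+by=n$ with $n\le N+1$ must have $x\le b-1$. Indeed, if $x\ge b$ then $n\ge ab>N+1$ (using $a,b\ge2$). Both right-hand sides, $\frac{N+1}{2}$ and $\frac{N-1}{2}$, are less than $ab$. So any nonnegative solution is the normalized one, which is unique. The only mildly delicate points are this bound $x\le b-1$ and the bookkeeping of the edge cases; the core argument is the involution $(x,y)\mapsto(b-1-x,-1-y)$.
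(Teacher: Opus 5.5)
Your proof is correct and is essentially the paper's argument: the paper obtains Theorem~\ref{orithm} by setting $m=(a-1)(b-1)/2$ in Theorem~\ref{m_exactly1}. That theorem's proof is your normalized-representation computation, phrased there as $r_1+r_2=b-1$ and $s_1+s_2=-1$, followed by the same bound $x\le b-1$ for uniqueness. Your version is slightly more complete, since you explicitly justify that $n$ is representable exactly when its normalized $y$ is nonnegative, and you handle integrality and the cases $a=1$ or $b=1$, which the paper treats only in passing.
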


If either $a$ or $b$ is $1$, we have the right-hand side equal $0$, so we know immediately that \eqref{ee1} has the trivial solution $(x,y) = (0,0)$. We thus consider $a, b\ge 2$ with $\gcd(a,b) = 1$. For convenience, we use the word \textit{solution} to mean an integral solution, and we say an equation is \textit{solvable} if it has a nonnegative solution. 

We define several functions to be used frequently. First, the function $\Gamma: \{(a,b)\in\mathbb{N}^2:\gcd(a,b) = 1\}\rightarrow \{0,1\}$ is defined as
\begin{equation}\Gamma(a,b) \ =\ \begin{cases}0,&\mbox{ if }\eqref{ee1}\mbox{ is solvable},\\ 1,&\mbox{ if }\eqref{ee2}\mbox{ is solvable}.\end{cases}\end{equation}
Second, for each $m\in\mathbb{Z}$, let $\rho(m)\in\{0,\dots,b-1\}$ be the unique integer such that $a\rho(m)\equiv m\mod b$, and set
\begin{equation}\sigma(m)\ :=\ \frac{m-a\rho(m)}{b}.\end{equation}
Third, Arachchi et al. \cite{ACLLMM} used the function \(\Theta(a,b)\), which gives
the unique multiplicative inverse of $a$ modulo $b$ with $1 \le \Theta(a, b) \le b-1$; note that $\Theta(a,b) = \rho(1)$.
If we let 
\begin{equation}\Theta\ :=\ \Theta(a,b)\ =\ \rho(1),\end{equation}
then since $\sigma(1) = (1-a\Theta)/b$, we have
\begin{equation}0\ \le\ -\sigma(1)\ =\ \frac{a\Theta - 1}{b}\ \le\ \frac{a(b-1)-1}{b}\ =\ a -\frac{a+1}{b}\ <\ a,\end{equation}
so 
\begin{equation}\label{e55}0\ \le\ -\sigma(1)\ \le\ a-1.\end{equation}

For brevity, set
\begin{equation}\Gamma\ :=\ \Gamma(a,b)\quad \mbox{ and }\quad \Theta'\ :=\ \Theta(b,a).\end{equation}
We have a characterization of which equation is solvable for a given pair $(a,b)$.
\begin{thm}\cite[Theorem 1]{ACLLMM}\label{master_key}
Let $a, b \ge 2$ with $\gcd(a,b) = 1$. The following are true.
\begin{enumerate}
\item If $a$ is odd, then $\Gamma = 0$ if and only if $\Theta'$ is odd.
\item If $a$ is even, then $\Gamma = 0$ if and only if $\Theta$ is odd.
\end{enumerate}
\end{thm}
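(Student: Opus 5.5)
The plan is to reduce solvability of \eqref{ee1} to the sign of $\sigma(m)$ for $m=(a-1)(b-1)/2$, compute $\rho(m)$ explicitly in terms of $\Theta$, and then use Theorem~\ref{orithm} and the symmetry in $a,b$. In fact I would prove the slightly stronger statement that \emph{whenever $b$ is odd}, $\Gamma=0$ if and only if $\Theta$ is odd. Since \eqref{ee1} and \eqref{ee2} are symmetric in $(a,b)$, we have $\Gamma(a,b)=\Gamma(b,a)$. Swapping the roles of $a$ and $b$ then gives: whenever $a$ is odd, $\Gamma=0$ if and only if $\Theta'$ is odd, which is item (1). Item (2) follows because $a$ even forces $b$ odd by coprimality.

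\emph{Reduction.} The integer solutions of $ax+by=m$ are exactly $x=\rho(m)+bs$, $y=\sigma(m)-as$ with $s\in\mathbb{Z}$. A nonnegative solution must have $x\ge 0$, which forces $s\ge 0$. Increasing $s$ only decreases $y$, so the one to test is $s=0$, i.e.\ $x=\rho(m)\in\{0,\dots,b-1\}$. Hence \eqref{ee1} is solvable if and only if $\sigma(m)\ge 0$.

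\emph{Computing $\rho(m)$ when $b$ is odd.} Since $2\cdot\frac{b-1}{2}=b-1\equiv -1 \pmod b$, we get $2m\equiv 1-a\pmod b$. Because $x=\Theta m$ satisfies $ax\equiv m$, we have $\rho(m)\equiv \Theta m\pmod b$. Multiplying by $2$ gives $2\rho(m)\equiv \Theta(1-a)\equiv \Theta-1\pmod b$. Write $t:=-\sigma(1)=(a\Theta-1)/b$, which satisfies $0\le t\le a-1$ by \eqref{e55}. The identity to use is
\begin{equation*}
2\bigl(m-a\rho\bigr) \;=\; ab-b-(a\Theta-1) + a(\Theta-1-2\rho) \;=\; b(a-1-t)+a(\Theta-1-2\rho).
\end{equation*}

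\emph{Parity cases.} If $\Theta$ is odd, then $\rho(m)=(\Theta-1)/2$, which lies in $[0,b-1]$. The identity gives $\sigma(m)=(a-1-t)/2\ge 0$, so \eqref{ee1} is solvable and $\Gamma=0$. If $\Theta$ is even, then $\rho(m)=(\Theta-1+b)/2$, which lies in range since $\Theta\le b-1$. The identity gives $2b\,\sigma(m)=b(a-1-t)-ab=-b(1+t)$, so $\sigma(m)=-(1+t)/2<0$. Thus \eqref{ee1} is not solvable, and by Theorem~\ref{orithm} this means $\Gamma=1$.

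The only delicate point is making sure the chosen representative of $\rho(m)$ lies in $[0,b-1]$ in each parity case, and applying the symmetric swap correctly: item (1) uses the lemma with $(b,a)$, which needs $a$ odd and is valid whether $b$ is odd or even. Everything else is routine algebra.
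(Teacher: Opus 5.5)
Your proof is correct and is essentially the paper's argument specialized to $k=2$: your congruence $2\rho(M_2)\equiv\Theta-1 \mod b$ is the $k=2$ case of Lemma~\ref{lbound}, your identity is \eqref{e52} with $k=2$, and your split on the parity of $\Theta$ is exactly the dichotomy $N_2=0$ versus $N_2=1$ behind Theorem~\ref{mform}. The only difference is packaging: the paper goes through the divisibility criterion of Corollary~\ref{solexist} and then checks that $2b$ (or $2a$, after the same swap of $a$ and $b$ that you use) automatically divides the second quantity, whereas you restrict to odd $b$ from the start and read off $\rho(M_2)$ and the sign of $\sigma(M_2)$ directly.
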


Our first result generalizes Theorem \ref{orithm} to a different pair of numbers on the right-hand side. 
\begin{thm}\label{m_exactly1}
Let $a,b\ge 2$ with $\gcd(a,b)=1$, and let $F:=ab-a-b$. For every integer $m$, exactly one of the equations
\begin{equation}\label{ee3}
ax+by\ =\ m
\quad \mbox{ and }\quad
ax+by\ =\ F-m
\end{equation}
is solvable. If additionally, we have $-1\le m\le F+1$, then the nonnegative solution is unique.
\end{thm}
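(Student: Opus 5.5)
The plan is to use the functions $\rho$ and $\sigma$ introduced above. Every integer solution of $ax+by=m$ has the form $(x,y)=(\rho(m)+bt,\ \sigma(m)-at)$ with $t\in\mathbb{Z}$. Since $0\le\rho(m)\le b-1$, the solution with smallest nonnegative $x$ is $(\rho(m),\sigma(m))$. Hence $ax+by=m$ is solvable if and only if $\sigma(m)\ge 0$: if $\sigma(m)\ge0$ we have a nonnegative solution, and any solution with $t\ge1$ has $y\le\sigma(m)-a$, while $t\le -1$ gives $x<0$. So the theorem reduces to showing that exactly one of $\sigma(m)\ge 0$ and $\sigma(F-m)\ge 0$ holds.

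To compare the two, I compute $\rho(F-m)$. We have $F-m\equiv -a-m \pmod b$, so $\rho(F-m)\equiv -1-\rho(m)\pmod b$. Since $0\le\rho(m)\le b-1$, the value $b-1-\rho(m)$ lies in $\{0,\dots,b-1\}$, so $\rho(F-m)=b-1-\rho(m)$. Then
\begin{equation*}
\sigma(F-m)\ =\ \frac{ab-a-b-m-a(b-1-\rho(m))}{b}\ =\ \frac{a\rho(m)-m}{b}-1\ =\ -\sigma(m)-1.
\end{equation*}
Therefore $\sigma(m)+\sigma(F-m)=-1$, and exactly one of these two integers is nonnegative. This proves that exactly one of the equations in \eqref{ee3} is solvable.

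For uniqueness under $-1\le m\le F+1$, note that $F-m$ also lies in $[-1,F+1]$, so by symmetry it suffices to treat whichever equation is solvable, say $ax+by=n$ with $n\le F+1$. Two distinct nonnegative solutions differ by $(bt,-at)$ with $t\ne0$, so one of them has $x\ge b$, and then $by\ge 0$ gives $n\ge ab$. But $n\le F+1=ab-a-b+1<ab$ because $a,b\ge2$, a contradiction. (The lower bound $m\ge-1$ is only needed so that $F-m\le F+1$ when the solvable equation is the second one.)

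I expect no real obstacle. The only delicate step is identifying $\rho(F-m)=b-1-\rho(m)$ exactly, including the edge case $\rho(m)=b-1$, where $\rho(F-m)=0$; the range argument above covers this case.
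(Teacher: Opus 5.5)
Your proof is correct and follows essentially the same route as the paper. The paper derives $r_1+r_2=b-1$ (your $\rho(m)+\rho(F-m)=b-1$) by adding the two representations and reducing modulo $b$, concludes $s_1+s_2=-1$, and gets uniqueness from the same fact you use: two distinct nonnegative solutions would force some $x\ge b$, which is incompatible with $m\le F+1$. Your explicit check that $ax+by=m$ is solvable if and only if $\sigma(m)\ge 0$ for every integer $m$ is a step the paper only uses implicitly, so your write-up is slightly more complete there.
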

Letting $m = (a-1)(b-1)/2$ in Theorem \ref{m_exactly1} gives Theorem \ref{orithm}. Theorem \ref{m_exactly1} 
reveals an unexpected fact that the existence of a nonnegative solution does not require any restriction on $m$. It is uniqueness that asks for the bound $-1\le m\le F+1$. 
A potentially undesirable feature of \eqref{ee3} is that the right-hand sides, namely $m$ and $F-m$, are not necessarily equal, which motivates us to instead investigate the pair
\begin{equation}ax+by=\frac{(a-1)(b-1)}{k}
\quad \mbox{ and }\quad
1 + ax+by=\frac{(a-1)(b-1)}{k},\end{equation}
where $k$ is a positive integer. Interestingly, Theorem \ref{m_exactly1} facilitates the proof of our second result. 
Let 
\begin{equation}M_k \ :=\ \frac{(a-1)(b-1)}{k}.\end{equation}

\begin{thm}\label{c_atmost1}
Let $a,b\ge 2$ with $\gcd(a,b) = 1$, and let $k\in\mathbb{N}$. Suppose that $k$ divides $(a-1)(b-1)$. Then at most one of the two equations
\begin{equation}
    ax+by \ =\ M_k\quad \mbox{ and }\quad
    1 + ax+by \ =\ M_k
\end{equation}
is solvable. In that case, the nonnegative solution is unique. 
\end{thm}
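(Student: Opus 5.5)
Both claims follow from Theorem \ref{m_exactly1} by choosing $m$ appropriately. The second equation $1+ax+by=M_k$ is the same as $ax+by=M_k-1$.

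\emph{At most one equation is solvable.} We have
\[
F-(M_k-1)\ =\ ab-a-b+1-M_k\ =\ (a-1)(b-1)-M_k\ =\ (k-1)M_k .
\]
Set $m:=M_k$, and suppose both $ax+by=M_k$ and $ax+by=M_k-1$ have nonnegative solutions. Since $(k-1)M_k\ge 0$ and the semigroup generated by $a,b$ is closed under addition, we can try to add nonnegative combinations. The cleaner route uses a representation of $(k-1)M_k$ itself, but that need not exist.

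Instead, I would argue directly. Assume $ax_1+by_1=M_k$ and $ax_2+by_2=M_k-1$ with all variables nonnegative. Multiply the first by $k-1$ and add the second:
\[
a\bigl((k-1)x_1+x_2\bigr)+b\bigl((k-1)y_1+y_2\bigr)\ =\ kM_k-1\ =\ (a-1)(b-1)-1\ =\ F .
\]
This is a nonnegative representation of $F$. That contradicts Theorem \ref{m_exactly1} with $m=0$: the equation $ax+by=0$ is solvable, so $ax+by=F$ is not. When $k=1$ the factor $k-1$ vanishes and the identity still holds, since $M_1-1=F$. So at most one of the two equations is solvable.

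\emph{Uniqueness.} Whichever equation is solvable has right-hand side $M_k$ or $M_k-1$. Both values lie in $[-1,F+1]$, because
\[
0\ \le\ M_k\ \le\ (a-1)(b-1)\ =\ F+1 .
\]
The uniqueness clause of Theorem \ref{m_exactly1}, applied with $m$ equal to that right-hand side, then gives a unique nonnegative solution.

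The only genuine step is spotting the combination $(k-1)\cdot(\text{first})+(\text{second})$, which lands exactly on the Frobenius number $F$. After that, everything is a direct appeal to Theorem \ref{m_exactly1}. The one subtle point is the range check for uniqueness at the endpoints $M_k-1=-1$ (which has no solution anyway) and $M_1=F+1$.
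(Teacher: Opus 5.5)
Your proof is correct and is essentially the paper's argument. The paper applies Theorem~\ref{m_exactly1} with $m=M_k-1$, whose complement is $F-m=(k-1)M_k$, so scaling a solution of $ax+by=M_k$ by $k-1$ rules out $1+ax+by=M_k$; you instead add that scaled solution to a solution of $ax+by=M_k-1$ to represent $F$ and invoke the case $m=0$, which is the same identity $(k-1)M_k+(M_k-1)=F$ used slightly differently, and uniqueness is handled as in the paper via $0\le M_k\le F+1$.
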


Inspired by Theorem \ref{master_key}, we would like to characterize the solvability of the two equations in Theorem \ref{c_atmost1}. We see in Lemma \ref{l50} that $ax+by = M_k$ is solvable if and only if $\sigma(M_k)\ge 0$. However, to obtain the divisibility condition as in Theorem \ref{master_key}, we set 
\begin{equation}\label{e51}N_k\ :=\ \frac{k\rho(M_k) - \Theta + 1}{b}.\end{equation}
Our next result states that $ax+by=M_k$ is solvable if and only if $N_k=0$. 

\begin{thm}\label{trichotomy}
    Exactly one of the following occurs. 
    \begin{enumerate}
        \item If $k\rho(M_k) - \Theta + 1 = 0$, the equation $ax+by=M_k$ is solvable, with 
        \begin{equation}(x,y)\ =\ \left(\frac{\Theta-1}{k}, \ \frac{ab-b-a\Theta+1}{bk}\right),\end{equation} while $1+ax+by=M_k$ is not solvable. 
        \item If 
        \begin{equation}\rho(M_k)\ \ge\ \Theta \quad \mbox{ and } \quad \sigma(M_k) \ \ge \ \sigma(1),\end{equation}
        then $1+ax+by=M_k$ is solvable, with 
        \begin{equation}(x,y)\ =\ \left(\rho(M_k)-\Theta, \ \sigma(M_k)-\sigma(1)\right).\end{equation} Meanwhile, the equation $ax+by=M_k$ is not solvable.
        \item Otherwise, neither equation is solvable.
    \end{enumerate}
\end{thm}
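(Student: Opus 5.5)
The plan is to reduce both equations to the standard parametrization of solutions of a linear Diophantine equation. Since $\gcd(a,b)=1$, every integral solution of $ax+by=m$ has the form $(x,y)=(\rho(m)+bt,\ \sigma(m)-at)$ with $t\in\mathbb{Z}$. Nonnegativity of $x$ forces $t\ge 0$, because $0\le\rho(m)\le b-1$. The largest possible $y$ is then attained at $t=0$. Hence $ax+by=m$ is solvable if and only if $\sigma(m)\ge 0$, and in that case $(\rho(m),\sigma(m))$ is a nonnegative solution; this is the content of Lemma \ref{l50}, which I would reprove in one line if needed. Throughout I write $\rho:=\rho(M_k)$ and $\sigma:=\sigma(M_k)$. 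Mutual exclusivity of the two equations is supplied by Theorem \ref{c_atmost1}, so it remains to identify exactly when each equation is solvable.

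\textbf{Case (1).} I would show that $ax+by=M_k$ is solvable if and only if $N_k=0$.
\begin{itemize}
\item Multiply $a\rho+b\sigma=M_k$ by $k$ to get $ak\rho+bk\sigma=ab-a-b+1$. Reducing mod $b$ gives $a(k\rho+1)\equiv 1\pmod b$, so $k\rho\equiv\Theta-1\pmod b$. Thus $N_k$ is an integer.
\item Write $k\rho=\Theta-1+bN_k$ and substitute back. After dividing by $b$ this yields
\begin{equation*}
aN_k+k\sigma\ =\ a-1-\frac{a\Theta-1}{b}\ =\ a-1+\sigma(1)\ \in\ [0,a-1],
\end{equation*}
where the last inclusion uses \eqref{e55}.
\item From $\rho\ge 0$ and $\Theta\le b-1$ we get $N_k\ge(2-b)/b>-1$, so $N_k\ge 0$.
\item If $\sigma\ge 0$, then $aN_k\le a-1$ forces $N_k=0$.
\item Conversely, if $N_k=0$, the displayed identity gives $k\sigma=a-1+\sigma(1)\ge 0$, so the equation is solvable. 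The stated values $x=(\Theta-1)/k$ and $y=(ab-b-a\Theta+1)/(bk)$ are exactly $\rho$ and $\sigma$ read off from these identities.
\end{itemize}

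\textbf{Case (2).} The equation $1+ax+by=M_k$ is the same as $ax+by=M_k-1$. Since $a\Theta\equiv 1\pmod b$, we have $\rho(M_k-1)\equiv\rho-\Theta\pmod b$. This splits into two subcases.
\begin{itemize}
\item If $\rho\ge\Theta$, then $\rho(M_k-1)=\rho-\Theta$ and $\sigma(M_k-1)=\sigma-\sigma(1)$. Solvability is therefore equivalent to $\sigma\ge\sigma(1)$, with the stated solution.
\item If $\rho<\Theta$, then $\rho(M_k-1)=\rho-\Theta+b$ and $\sigma(M_k-1)=\sigma-\sigma(1)-a$. In this subcase I must show $\sigma-\sigma(1)<a$, so that the equation is not solvable.
\end{itemize}
Once both subcases are settled, the three listed alternatives are exhaustive. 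Theorem \ref{c_atmost1} then gives exclusivity and uniqueness, and case (3) is simply the complement of cases (1) and (2).

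The main obstacle is the inequality $\sigma-\sigma(1)<a$ in the subcase $\rho<\Theta$. My first attempt would reuse the identity $bk\sigma=ab-a-b+1-ak\rho$ together with $-b\sigma(1)=a\Theta-1$. This gives
\begin{equation*}
b\bigl(\sigma-\sigma(1)\bigr)\ =\ \frac{(a-1)(b-1)}{k}-a\rho+a\Theta-1.
\end{equation*}
The goal is to bound the right-hand side by $ab-1$ using $\rho<\Theta\le b-1$ and the congruence $k\rho\equiv\Theta-1\pmod b$ established in case (1). This bound is not automatic: it fails if one only uses $\rho\ge 0$ and $\Theta\le b-1$, so some input from $k\rho\equiv\Theta-1$ appears to be needed. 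The congruence implies $k\rho\ge\Theta-1$ unless $N_k<0$, and $N_k\ge 0$ was shown above, so $\Theta-\rho\le\Theta-(\Theta-1)/k$. If this does not close the bound, I would treat $k=1$ separately, using $M_1=F+1$, and for $k\ge 2$ use $M_k\le(a-1)(b-1)/2$ to gain the needed slack.
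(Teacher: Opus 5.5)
Everything except one step matches the paper. Your Case (1) is Theorem \ref{mform}, with the same identity \eqref{e52} and the same use of $N_k\ge 0$. Your subcase $\rho\ge\Theta$ is Theorem \ref{m-1form}. Exclusivity comes from Theorem \ref{c_atmost1}, as in the paper.

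The gap is the subcase $\rho<\Theta$. There you correctly reduce unsolvability of $1+ax+by=M_k$ to the inequality $\sigma-\sigma(1)<a$, but you never prove it. You stop at ``if this does not close the bound, I would \ldots''. The fallback (treat $k=1$ separately, and use $M_k\le (a-1)(b-1)/2$ for $k\ge 2$) is not an argument. This subcase is exactly what item (3) needs when $\rho<\Theta$ and $N_k\neq 0$, so exhaustiveness of the trichotomy is not yet established.

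The paper closes this step in Theorem \ref{wraparound} with a tool you already cite. Suppose $\sigma-\sigma(1)-a\ge 0$. Then $\sigma\ge a+\sigma(1)\ge 1$ by \eqref{e55}. So $(\rho,\sigma)$ is a nonnegative solution of $ax+by=M_k$, and $(\rho-\Theta+b,\ \sigma-\sigma(1)-a)$ is a nonnegative solution of $1+ax+by=M_k$. This contradicts Theorem \ref{c_atmost1}.

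A direct bound also follows immediately from your own identity $aN_k+k\sigma=a-1+\sigma(1)$. Since $N_k\ge 0$, we get $k\sigma\le a-1+\sigma(1)$. Using $0\le -\sigma(1)\le a-1$, this gives
\[
\sigma-\sigma(1)\ \le\ \frac{(a-1)+(k-1)(-\sigma(1))}{k}\ \le\ a-1,
\]
and this holds even without the assumption $\rho<\Theta$.

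Your first attempt also works if you carry it through. Use $k\rho\ge\Theta-1$ and $\Theta\le b-1$:
\[
b(\sigma-\sigma(1))\ =\ M_k+a(\Theta-\rho)-1\ \le\ ab-a-1+\frac{a+1-b}{k}\ \le\ ab-2\ <\ ab.
\]
No case split on $k$ is needed.
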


\begin{cor}\label{solexist}
The equation $ax+by= M_k$ is solvable if and only if
    \begin{equation}\label{e53}k \mbox{ divides }(\Theta-1) \qquad\text{and}\qquad bk\mbox{ divides } \left(ab-b-a\Theta+1\right).\end{equation}
    In that case, the unique nonnegative solution is
    \begin{equation}\left(\frac{\Theta-1}{k}, \ \frac{ab-b-a\Theta+1}{bk}\right).\end{equation}
\end{cor}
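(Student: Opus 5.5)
The plan is to read this off Theorem \ref{trichotomy}. By that theorem, $ax+by=M_k$ is solvable exactly in case (1), namely when $k\rho(M_k)=\Theta-1$, and then the solution is the displayed pair. Uniqueness of the solution comes from Theorem \ref{c_atmost1}. So it suffices to show that the condition $k\rho(M_k)=\Theta-1$ is equivalent to \eqref{e53}.

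\emph{Forward direction.} Assume $k\rho(M_k)=\Theta-1$. Then $k$ divides $\Theta-1$. Case (1) of Theorem \ref{trichotomy} gives $y=(ab-b-a\Theta+1)/(bk)$, and this is an integer because it is part of a solution, so $bk$ divides $ab-b-a\Theta+1$. To avoid relying on the displayed formula, one can also check directly: $y=\sigma(M_k)=(M_k-a\rho(M_k))/b$. Multiplying by $bk$ gives $bky=(a-1)(b-1)-a(\Theta-1)=ab-b-a\Theta+1$, which is the same divisibility.

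\emph{Converse.} Assume $k\mid(\Theta-1)$ and $bk\mid(ab-b-a\Theta+1)$. Put $x_0=(\Theta-1)/k$ and $y_0=(ab-b-a\Theta+1)/(bk)$, both integers.
\begin{itemize}
\item Nonnegativity: $x_0\ge 0$ since $\Theta\ge 1$. Also $y_0\ge 0$ since $a\Theta\le a(b-1)$ gives $ab-b-a\Theta+1\ge a-b+1$. That bound is not yet obviously nonnegative, so use $a\Theta\equiv 1\pmod b$ instead: write $a\Theta-1=-b\sigma(1)$ with $0\le-\sigma(1)\le a-1$ by \eqref{e55}. Then $ab-b-a\Theta+1=b(a-1+\sigma(1))\ge 0$.
\item Solution check: $kax_0+kby_0=a\Theta-a+ab-b-a\Theta+1=(a-1)(b-1)$. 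Dividing by $k$, we get $ax_0+by_0=M_k$.
\end{itemize}
So the equation is solvable directly, without passing back through $\rho$. (Equivalently, $0\le x_0\le b-1$ and $ax_0\equiv M_k\pmod b$ force $x_0=\rho(M_k)$, which is case (1).)

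The only point needing care is the nonnegativity of $y_0$, which is handled by rewriting it through $\sigma(1)$ and using \eqref{e55}. Everything else is routine algebra.
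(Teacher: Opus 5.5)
Your proof is correct and follows essentially the same route as the paper. The forward direction is read off from Theorem \ref{trichotomy}, and your direct computation $bk\,\sigma(M_k)=ab-b-a\Theta+1$ makes it self-contained. The converse is the same direct check, with nonnegativity of the second coordinate obtained by writing it as $(a-1+\sigma(1))/k$ and using \eqref{e55}, and uniqueness again comes from Theorem \ref{c_atmost1}.
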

\begin{proof}
    The forward direction follows from Theorem \ref{trichotomy}. We prove the backward direction. 
 Assume \eqref{e53}. Both $(\Theta-1)/k$ and 
 \begin{equation}\frac{ab-b-a\Theta+1}{bk}\ =\ \frac{a-1+\sigma(1)}{k}\end{equation}
 are nonnegative due to \eqref{e55}. We have
\begin{equation}
        a \cdot \frac{\Theta-1}{k}+b\cdot \frac{ab-b-a\Theta+1}{bk} \ =\ \frac{a\Theta-a+ab-b-a\Theta+1}{k}\ =\ M_k.
\end{equation}
Uniqueness follows from Theorem \ref{c_atmost1}.
\end{proof}

\begin{claim} Corollary \ref{solexist} implies Theorem \ref{master_key}.\end{claim}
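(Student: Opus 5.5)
The plan is to specialize Corollary \ref{solexist} to $k=2$ and check its two divisibility conditions by parity. Since $\gcd(a,b)=1$, $a$ and $b$ are not both even, so $(a-1)(b-1)$ is even and $k=2$ is admissible. Moreover $M_2=(a-1)(b-1)/2$, so $ax+by=M_2$ is exactly \eqref{ee1}. Hence $\Gamma=0$ if and only if
\begin{equation*}
2\mid(\Theta-1)\qquad\text{and}\qquad 2b\mid(ab-b-a\Theta+1).
\end{equation*}
As in the proof of Corollary \ref{solexist}, $(ab-b-a\Theta+1)/b=a-1+\sigma(1)$ is an integer, because $a\Theta\equiv 1\pmod b$. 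So the second condition says exactly that $a-1+\sigma(1)$ is even, where $\sigma(1)=(1-a\Theta)/b$.

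\emph{Case $a$ even.} Then $b$ is odd and $a-1$ is odd. Also $1-a\Theta$ is odd, and dividing it by the odd number $b$ gives an odd quotient, so $\sigma(1)$ is odd. Hence $a-1+\sigma(1)$ is always even, and the second condition holds automatically. So $\Gamma=0$ if and only if $\Theta$ is odd, which is part (2) of Theorem \ref{master_key}.

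\emph{Case $a$ odd.} Here I use symmetry. The hypotheses are symmetric in $a$ and $b$, and so are equations \eqref{ee1} and \eqref{ee2}, so $\Gamma(a,b)=\Gamma(b,a)$. Applying Corollary \ref{solexist} with the roles of $a$ and $b$ swapped, $\Gamma=0$ if and only if $\Theta'$ is odd and $b-1+\tau$ is even, where
\begin{equation*}
\tau\ :=\ \frac{1-b\Theta'}{a}
\end{equation*}
is an integer. Since $a$ is odd, $\tau$ has the same parity as $1-b\Theta'$. I claim that if $\Theta'$ is odd, then $b-1+\tau$ is automatically even:
\begin{itemize}
\item if $b$ is even, then $b-1$ is odd and $1-b\Theta'$ is odd, so the sum is even;
\item if $b$ is odd, then $b-1$ is even and $1-b\Theta'$ is even (as $b\Theta'$ is odd), so the sum is again even.
\end{itemize}
Thus $\Gamma=0$ if and only if $\Theta'$ is odd, which is part (1).

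The only delicate point is the symmetry step: one must check that the corollary, and the definition of $\Gamma$, apply verbatim with $a$ and $b$ interchanged, and that $\Theta(b,a)$ then plays the role of $\Theta$. Everything else is parity bookkeeping, using that dividing by an odd number preserves parity.
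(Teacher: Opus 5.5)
Your proof is correct and follows the paper's route: set $k=2$ in Corollary \ref{solexist}, use the $a\leftrightarrow b$ symmetry when $a$ is odd, and show by parity that the condition $2b\mid(ab-b-a\Theta+1)$ (resp.\ its analogue with $2a$ and $\Theta'$) holds automatically once $\Theta$ (resp.\ $\Theta'$) is odd. The only cosmetic difference is that you test the parity of the quotient $a-1+\sigma(1)$ (resp.\ $b-1+\tau$), whereas the paper checks evenness and divisibility by the odd number $b$ (resp.\ $a$) separately, e.g.\ via $ab-a-b\Theta'+1=(a-\Theta')b-(a-1)$, which avoids your subcase split on the parity of $b$.
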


\begin{proof} Let $k = 2$ in Corollary \ref{solexist}. We consider odd $a$ and even $a$ separately.

\bigskip

\noindent \textbf{Case 1:} $a$ is odd. Due to symmetry, \eqref{e53} can be replaced by 
    \begin{equation}\label{ed60}2 \mbox{ divides }(\Theta'-1) \qquad\text{and}\qquad 2a\mbox{ divides } \left(ab-a-b\Theta'+1\right).\end{equation}
    Note that when $\Theta'$ is odd, 
    \begin{equation}ab-a-b\Theta'+1\ =\ (a-\Theta')b - (a-1)\end{equation}
    is even. Furthermore, 
    \begin{equation}ab-a-b\Theta'+1 \ =\ a(b-1) - (b\Theta'-1)\end{equation}
    is divisible by $a$ due to the definition of $\Theta'$. 
    Therefore, \eqref{ed60} is equivalent to the condition that $\Theta'$ is odd.

\bigskip

\noindent \textbf{Case 2:} $a$ is even. We have \eqref{e53} becomes
    \begin{equation}\label{ed61}2 \mbox{ divides }(\Theta-1) \qquad\text{and}\qquad 2b\mbox{ divides } \left(ab-b-a\Theta+1\right).\end{equation}
    Since $a$ is even, $b$ must be odd, so $(ab-b-a\Theta+1)$ is even. Furthermore, by the definition of $\Theta$, $b$ divides $(ab-b-a\Theta+1)$. Hence, $2b$ divides $(ab-b-a\Theta+1)$. Therefore, \eqref{ed61} is equivalent to the condition that $\Theta$ is odd. 
\end{proof}

Inspired by earlier results \cite{CKKLMYY, C1, CGGJMS, Da, T1}, we apply the above framework to study the equations
\begin{equation}F_n x + F_{n+1}y \ =\ \frac{(F_n-1)(F_{n+1}-1)}{k}\end{equation}
and
\begin{equation} 1 + F_n x + F_{n+1}y\ =\ \frac{(F_n-1)(F_{n+1}-1)}{k}\end{equation}
and demonstrate several solvable and nonsolvable situations. Here $(F_n)_{n=-\infty}^\infty$ is the Fibonacci sequence given by $F_1 = F_2 = 1$ and $F_n = F_{n-1} + F_{n-2}$ for all $n\in\mathbb{Z}$. 

Finally, we compute the function $\Gamma$ and its periodicity in certain special cases including when $b \equiv \pm 1\mod a$, when $b$ is raised to higher powers, and when the parameters are squared. 
\section{Generalizing the right-hand side}

Theorem \ref{m_exactly1}  follows from a natural modification of the proof of Theorem \ref{orithm}. We then use Theorem \ref{m_exactly1} to prove Theorem \ref{c_atmost1} on the solvability dependency between the two equations
\begin{equation}
    ax+by \ =\ M_k\quad \mbox{ and }\quad
    1 + ax+by \ =\ M_k.
\end{equation}
Finally, we characterize their solvability stated in Theorem \ref{trichotomy}.

\begin{proof}[Proof of Theorem \ref{m_exactly1}]
Let $r_1,r_2\in\{0,1,\dots,b-1\}$ satisfy
\begin{equation}
ar_1\ \equiv\ m \mod b\quad\mbox{ and }\quad
ar_2\ \equiv\ F-m\mod b.
\end{equation}
Since $\gcd(a,b)=1$, these are uniquely determined.
Let
\begin{equation}
s_1\ =\ \frac{m-ar_1}{b}\quad \mbox{ and }\quad
s_2\ =\ \frac{F-m-ar_2}{b}.
\end{equation}
Then
\begin{equation}
m\ =\ ar_1+bs_1
\quad\mbox{ and }\quad
F-m\ =\ ar_2+bs_2.
\end{equation}
Adding these two equations gives
\begin{equation}\label{e10}
F\ =\ a(r_1+r_2)+b(s_1+s_2).
\end{equation}
Since $F \equiv -a\mod b$, \eqref{e10} implies that $a(r_1+r_2+1)\equiv0\mod b$.
It follows from $\gcd(a,b)=1$ that $r_1+r_2+1\equiv 0\mod b$.
Since $0\le r_1+r_2\le2b-2$,
the only possibility is
$r_1+r_2=b-1$.
Replacing $r_1 + r_2$ by $b-1$ in \eqref{e10}, we obtain
\begin{equation}ab-a-b\ =\
a(b-1)+b(s_1+s_2).\end{equation}
Thus, $s_1+s_2=-1$. Hence, exactly one of $s_1,s_2$ is nonnegative. Therefore, exactly one of
the equations
\begin{equation}ax+by\ =\ m\quad \mbox{ and }\quad ax+by\ =\ F-m\end{equation}
is solvable.

To prove uniqueness, assume that the equation $ax+by = m$ has two nonnegative solutions, called $(x_1, y_1)$ and $(x_2, y_2)$. Then 
$a|x_1-x_2| = b|y_1 - y_2|$, so $b$ divides $|x_1-x_2|$ because $\gcd(a,b) = 1$. For $i\in \{1,2\}$, we have $by_i\ge 0$, so 
\begin{equation}ax_i\ \le\ m\ \le\ ab-a-b+1,\quad \mbox{i.e.,}\quad x_i\ \le\ \frac{a-1}{a} \cdot(b-1)\ \le\ b-1.\end{equation}
Here we use the assumption $m\le F+1$ above. Hence, $|x_1-x_2|\le b-1$. Since $b$ divides $|x_1-x_2|$, together with $|x_1 - x_2| \le b-1$, we obtain $x_1 = x_2$, so $(x_1, y_1)=(x_2, y_2)$.

The uniqueness proof for $ax+by = F-m$ is similar and uses the assumption $m\ge -1$.
\end{proof}

\begin{cor}\label{k(k-1)cor}
Let $a,b\ge 2$ with $\gcd(a,b) = 1$. Suppose that $k$ divides $(a-1)(b-1)$. Then exactly one of equations
\begin{equation}
    ax+by \ =\ (k-1)M_k \quad \text{ and } \quad 1 + ax+by \ =\ M_k
\end{equation}
is solvable. Furthermore, the nonnegative solution is unique. 
\end{cor}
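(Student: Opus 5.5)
We will deduce the corollary directly from Theorem \ref{m_exactly1}, taking $m=(k-1)M_k$. The key computation is that the complementary right-hand side is
\begin{equation}
F-m\ =\ ab-a-b-(k-1)M_k\ =\ (a-1)(b-1)-1-(k-1)M_k\ =\ kM_k-1-(k-1)M_k\ =\ M_k-1.
\end{equation}
Thus the second equation $ax+by=F-m$ of Theorem \ref{m_exactly1} is precisely $1+ax+by=M_k$. Theorem \ref{m_exactly1} holds for every integer $m$, so we immediately get that exactly one of $ax+by=(k-1)M_k$ and $1+ax+by=M_k$ is solvable.

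For uniqueness, Theorem \ref{m_exactly1} asks that $-1\le m\le F+1$. Lower bound: $M_k\ge 0$ because $a,b\ge 2$, and $k\ge 1$, so $m=(k-1)M_k\ge 0\ge -1$. Upper bound: we need $(k-1)M_k\le F+1=(a-1)(b-1)=kM_k$. This is the inequality $-M_k\le 0$, which holds. Hence the uniqueness clause applies, and whichever equation is solvable has a unique nonnegative solution.

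Everything here is routine. The only step requiring care is the identity $F+1=(a-1)(b-1)=kM_k$, which turns the complement $F-m$ into $M_k-1$. I would also record the degenerate case $k=1$: there $m=0$, the first equation has the solution $(0,0)$, and the corollary is consistent with Theorem \ref{m_exactly1}, so no separate argument is needed.
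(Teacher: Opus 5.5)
Your proof is correct and is essentially the paper's argument. The paper applies Theorem \ref{m_exactly1} with $m = M_k-1$, so that $F-m=(k-1)M_k$; you take the complementary value $m=(k-1)M_k$. The two choices are equivalent, because both the conclusion and the bound $-1\le m\le F+1$ are unchanged under $m\leftrightarrow F-m$.
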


\begin{proof}
Let 
\begin{equation}F \ =\ ab-a-b\quad \mbox{ and }\quad m \ =\ M_k-1\end{equation}
in Theorem \ref{m_exactly1}. Then 
\begin{equation}-1\ \le\ m\ \le\ (a-1)(b-1) - 1\ =\ F.\end{equation}
Hence, the corollary follows from Theorem \ref{m_exactly1}.
\end{proof}

\begin{cor}\label{k(k-1)cor_2}
Let $a,b\ge 2$ with $\gcd(a,b) = 1$. Suppose that $k$ divides $(a-1)(b-1)$. Then exactly one of equations
\begin{equation}
    ax+by \ =\ M_k \quad \text{ and } \quad 1 + ax+by \ =\ (k-1)M_k
\end{equation}
is solvable. Furthermore, the nonnegative solution is unique. 
\end{cor}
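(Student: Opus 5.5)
This is the mirror image of Corollary \ref{k(k-1)cor}: we apply Theorem \ref{m_exactly1} with $F = ab-a-b$ and $m = M_k$. The complementary right-hand side is
\begin{equation}F - m \ =\ ab-a-b-M_k \ =\ (a-1)(b-1) - 1 - M_k \ =\ kM_k - M_k - 1 \ =\ (k-1)M_k - 1.\end{equation}
Hence the second equation of \eqref{ee3} reads $ax+by = (k-1)M_k - 1$, which is the equation $1+ax+by = (k-1)M_k$. Nonnegative solutions of the two forms correspond identically, so Theorem \ref{m_exactly1} says that exactly one of $ax+by=M_k$ and $1+ax+by=(k-1)M_k$ is solvable.

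For uniqueness we check the hypothesis $-1\le m\le F+1$ of Theorem \ref{m_exactly1}. Since $a,b\ge 2$ and $k\ge 1$, we have $M_k\ge 0\ge -1$. Also $M_k\le (a-1)(b-1) = F+1$. Both bounds hold, so the nonnegative solution is unique.

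The only point needing care is the algebraic identity $F = kM_k - 1$, which turns the complement into $(k-1)M_k - 1$. This follows directly from $(a-1)(b-1) = ab-a-b+1$, so there is no real obstacle. The case $k=1$ is consistent: the second equation becomes $1+ax+by=0$, which is never solvable, while $ax+by=(a-1)(b-1)$ is solvable, since the Frobenius number is $F$.
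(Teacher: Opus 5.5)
Your proposal is correct and is essentially the paper's proof: apply Theorem \ref{m_exactly1} with $m = M_k$, observe that $F - m = (k-1)M_k - 1$, and verify $0 \le M_k \le (a-1)(b-1) = F+1$ to get uniqueness. You also write out the complement computation and check the case $k=1$, both of which the paper leaves implicit.
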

\begin{proof}
   Let 
\begin{equation}F = ab-a-b\quad \mbox{ and }\quad m \ =\ M_k\end{equation}
in Theorem \ref{m_exactly1}. Then 
\begin{equation}0\ \le\ m\ \le\ (a-1)(b-1)\ =\ F+1.\end{equation}
Hence, the corollary follows from Theorem \ref{m_exactly1}.
\end{proof}

\begin{proof}[Proof of Theorem \ref{c_atmost1}]
    Let $m,n\in\mathbb{Z}_{\ge 0}$ be such that $am+bn = M_k$.
    Then $((k-1)m,(k-1)n)$ is a nonnegative solution for 
$ax+by = (k-1)M_k$. By Corollary \ref{k(k-1)cor}, the equation
$1+ax+by = M_k$
    is not solvable. Uniqueness is guaranteed by Corollaries \ref{k(k-1)cor} and \ref{k(k-1)cor_2}.
\end{proof}

We now determine which equation is solvable (if any) when $(a-1)(b-1)/2$ is replaced by $M_k$.

\begin{lem}\label{l50}
    For any integer $m\le (a-1)(b-1)$, the equation $ax+by=m$ is solvable if and only if $\sigma(m)\ge 0$. In that case, $(\rho(m),\sigma(m))$ is the unique nonnegative solution. 
\end{lem}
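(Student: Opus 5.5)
We prove both directions directly from the definitions of $\rho$ and $\sigma$, and then get uniqueness by the same divisibility argument used for uniqueness in Theorem \ref{m_exactly1}.

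\textbf{Backward direction.} Suppose $\sigma(m)\ge 0$. By definition $0\le\rho(m)\le b-1$ and
\begin{equation*}
a\rho(m)+b\sigma(m)\ =\ a\rho(m)+\bigl(m-a\rho(m)\bigr)\ =\ m,
\end{equation*}
so $(\rho(m),\sigma(m))$ is a nonnegative solution of $ax+by=m$.

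\textbf{Forward direction.} Suppose $(x,y)$ is a nonnegative solution of $ax+by=m$.
\begin{enumerate}
\item Reducing modulo $b$ gives $ax\equiv m\equiv a\rho(m)\pmod b$. Since $\gcd(a,b)=1$, we get $x\equiv\rho(m)\pmod b$.
\item Because $by\ge 0$ and $m\le (a-1)(b-1)$, we have
\begin{equation*}
ax\ \le\ m\ \le\ (a-1)(b-1),\quad\mbox{so}\quad x\ \le\ \frac{a-1}{a}(b-1)\ <\ b.
\end{equation*}
This is exactly the estimate in the proof of Theorem \ref{m_exactly1}, since $(a-1)(b-1)=F+1$.
\item Both $x$ and $\rho(m)$ lie in $\{0,\dots,b-1\}$ and are congruent modulo $b$, so $x=\rho(m)$.
\item Then $y=(m-a\rho(m))/b=\sigma(m)$. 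Since $y\ge 0$, we conclude $\sigma(m)\ge 0$.
\end{enumerate}

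\textbf{Uniqueness.} The forward argument shows that every nonnegative solution equals $(\rho(m),\sigma(m))$, so the solution is unique. Alternatively, uniqueness follows from Theorem \ref{m_exactly1} whenever $m\ge -1$. For $m<0$ there are no nonnegative solutions at all, and $\sigma(m)<0$ because $a\rho(m)\ge 0>m$, so the equivalence holds trivially.

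The only step needing care is the bound $x\le b-1$, which is where the hypothesis $m\le (a-1)(b-1)$ enters. Everything else is immediate from the definitions.
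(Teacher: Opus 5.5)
Your proof is correct and follows the paper's argument essentially verbatim: you pin down $x=\rho(m)$ from the congruence modulo $b$ together with the bound $ax\le m\le (a-1)(b-1)$, which forces $x\le b-1$. Your remark that uniqueness already follows from the forward direction is slightly more self-contained than the paper's appeal to Theorem \ref{m_exactly1}.
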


\begin{proof}
    If $\sigma(m)\ge 0$, then by definition of $\rho(m)$ and $\sigma(m)$, the equation $ax + by = m$ has the nonnegative solution $(\rho(m), \sigma(m))$.

    Conversely, suppose that $ax+by = m$ has a nonnegative solution $(p, q)$. Then $ap\equiv m\mod b$, and 
    \begin{equation}
    ap\ \le\ m\ \le\ ab - a-b+1,\mbox{ so }0\le p\ \le\ b - 1.
    \end{equation}
Hence, $p = \rho(m)$. Thus, we have
\begin{equation}\sigma(m) \ =\ \frac{m-a\rho(m)}{b}\ =\ \frac{m-ap}{b}\ =\ q\ \ge\ 0.\end{equation}

We prove uniqueness. If the equation $ax+by = m$ is solvable, then 
$0\le  m \le ab-a-b+1$. By Theorem \ref{m_exactly1}, the nonnegative solution is unique. 
\end{proof}

\begin{lem} \label{lbound}
    For each integer $k \ge 1$, $N_k$ is an integer in $[0,k-1]$.
\end{lem}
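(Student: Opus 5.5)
We show separately that $N_k$ is an integer and that $0\le N_k\le k-1$. Here $N_k=(k\rho(M_k)-\Theta+1)/b$, and $k$ divides $(a-1)(b-1)$ (implicit in the definition of $M_k$).

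\textbf{Integrality.} We check that $b$ divides $k\rho(M_k)-\Theta+1$, and it suffices to do so after multiplying by $a$, since $\gcd(a,b)=1$. By definition $a\rho(M_k)\equiv M_k \pmod b$, so $ak\rho(M_k)\equiv kM_k=(a-1)(b-1)\equiv -(a-1)\equiv 1-a \pmod b$. Also $a\Theta\equiv 1\pmod b$, so $a(\Theta-1)\equiv 1-a\pmod b$. Subtracting gives $a\bigl(k\rho(M_k)-\Theta+1\bigr)\equiv 0\pmod b$, hence $b\mid k\rho(M_k)-\Theta+1$ and $N_k\in\mathbb{Z}$.

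\textbf{Bounds.} We use $0\le\rho(M_k)\le b-1$ and $1\le\Theta\le b-1$. For the lower bound, $k\rho(M_k)-\Theta+1\ge 0-(b-1)+1=2-b>-b$, so $N_k>-1$; as $N_k$ is an integer, $N_k\ge 0$. For the upper bound, $k\rho(M_k)-\Theta+1\le k(b-1)-1+1=kb-k<kb$, so $N_k<k$, giving $N_k\le k-1$.

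The only step needing care is the integrality congruence, specifically the reduction $(a-1)(b-1)\equiv 1-a \pmod b$. The bounds follow directly from the ranges of $\rho$ and $\Theta$ together with integrality.
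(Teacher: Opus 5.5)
Your proposal is correct and essentially the paper's proof. Both derive the congruence $k\rho(M_k)\equiv \Theta-1 \pmod b$ and then bound $N_k$ using $0\le \rho(M_k)\le b-1$ and $1\le\Theta\le b-1$; the only difference is that you multiply by $a$ and cancel using $\gcd(a,b)=1$, whereas the paper multiplies by $\Theta$.
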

\begin{proof}
    Since $(a-1)(b-1)\equiv 1-a\mod b$, multiplying both sides by $\Theta$ and using $a\Theta\equiv 1\mod b$ give
    \begin{equation}\label{e50}\Theta(a-1)(b-1)\ \equiv\ \Theta(1-a)\ \equiv\ \Theta-1\mod b.\end{equation}
    By the definition of $\rho$, 
    \begin{equation}\frac{(a-1)(b-1)}{k}\ =\ M_k\ \equiv\ a\rho(M_k)\mod b,\end{equation}
    so
    \begin{equation}\label{e51'}\Theta (a-1)(b-1)\ \equiv\ k\rho(M_k)\mod b.\end{equation}
    By \eqref{e50} and \eqref{e51'}, 
    \begin{equation}k\rho(M_k)\ \equiv\ \Theta-1\mod b.\end{equation}
    Therefore, 
    \begin{equation}
    N_k \ = \ \frac{k \rho(M_k) - \Theta + 1}{b}
    \end{equation}
    is an integer.  Since $0\le \rho(M_k)\le b-1$ and $0\le \Theta-1\le b-2$, we get $-1<N_k<k$, so $N_k\in[0, k-1]$.
\end{proof}

\begin{thm}\label{mform}
    The equation $ax+by=M_k$ is solvable if and only if $k\rho(M_k) - \Theta + 1 = 0$. In that case, 
    the unique nonnegative solution is 
    \begin{equation}\left(\frac{\Theta-1}{k}, \ \frac{ab-b-a\Theta+1}{bk}\right).\end{equation}
\end{thm}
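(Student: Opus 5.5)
By Lemma \ref{l50} (applicable since $M_k\le (a-1)(b-1)$), the equation $ax+by=M_k$ is solvable if and only if $\sigma(M_k)\ge 0$, in which case $(\rho(M_k),\sigma(M_k))$ is the unique nonnegative solution. So the plan is to show that $\sigma(M_k)\ge 0$ forces $N_k=0$, and conversely. Lemma \ref{lbound} gives $N_k\in[0,k-1]$ and
\begin{equation*}
k\rho(M_k)\ =\ \Theta-1+bN_k.
\end{equation*}
Multiplying $M_k = a\rho(M_k)+b\sigma(M_k)$ by $k$ and substituting this identity yields
\begin{equation*}
(a-1)(b-1)\ =\ a(\Theta-1)+abN_k+bk\sigma(M_k).
\end{equation*}
Since $a\Theta-1=-b\sigma(1)$, the right-hand side equals $-a+1-b\sigma(1)+abN_k+bk\sigma(M_k)$. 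Subtracting $1-a$ from both sides and dividing by $b$ gives the key identity
\begin{equation*}
a-1\ =\ -\sigma(1)+aN_k+k\sigma(M_k).
\end{equation*}

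\textbf{Forward direction.} Suppose $\sigma(M_k)\ge 0$. By \eqref{e55} we have $-\sigma(1)\ge 0$. Then the key identity gives $aN_k\le a-1$, so $N_k=0$, i.e., $k\rho(M_k)-\Theta+1=0$.

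\textbf{Converse.} If $N_k=0$, the key identity becomes $k\sigma(M_k)=a-1+\sigma(1)$. This is nonnegative by \eqref{e55}, so $\sigma(M_k)\ge0$ and the equation is solvable.

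\textbf{Explicit solution.} In the solvable case, $\rho(M_k)=(\Theta-1)/k$. Also $\sigma(M_k)=(a-1+\sigma(1))/k$, and since $\sigma(1)=(1-a\Theta)/b$ this equals $(ab-b-a\Theta+1)/(bk)$. Uniqueness comes from Lemma \ref{l50}, or equally from Theorem \ref{c_atmost1}.

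The only step needing care is deriving the key identity cleanly. The sign bookkeeping with $\sigma(1)$ is where I would be most careful. Once the identity is in hand, both directions reduce to the bound $0\le -\sigma(1)\le a-1$ from \eqref{e55}.
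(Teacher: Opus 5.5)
Your proposal is correct and follows essentially the same route as the paper. Both derive the identity $a-1 = aN_k - \sigma(1) + k\sigma(M_k)$ from $k\bigl(a\rho(M_k)+b\sigma(M_k)\bigr)=(a-1)(b-1)$, then combine Lemma \ref{l50}, Lemma \ref{lbound}, and the bound \eqref{e55} to get both directions and the explicit solution.
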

\begin{proof}By definition,
$(\rho(M_k), \sigma(M_k))$ is a solution to $ax +by = M_k$.
Thus,
\begin{align}
(a-1)(b-1)&\ = \ ka\rho(M_k) + kb\sigma(M_k)\nonumber\\
&\ =\ a(N_kb + \Theta-1) + kb\sigma(M_k)\nonumber\\
&\ =\ abN_k - b\sigma(1)+1 - a + kb\sigma(M_k).
\end{align}
Therefore,
\begin{equation}ab-b\ =\ abN_k - b\sigma(1) + kb\sigma(M_k),\end{equation}
and dividing both sides by $b$ gives
\begin{equation}\label{e52}a-1\ =\ aN_k - \sigma(1) + k\sigma(M_k).\end{equation}

Assume that $ax+by = M_k$ is solvable. By Lemma \ref{l50}, $\sigma(M_k)\ge 0$, so combining \eqref{e52} with \eqref{e55} gives $a-1 \ge a N_k$, so
\begin{equation}
N_k \ \le \ \frac{a-1}{a} \ < \ 1.
\end{equation}
As $N_k \ge 0$ by Lemma \ref{lbound}, we necessarily have $N_k = 0$. Hence, $k\rho(M_k) - \Theta + 1 = 0$.

Assume that $k\rho(M_k) - \Theta + 1 = 0$. Then \eqref{e52} and \eqref{e55} give  
\begin{equation}
 \sigma(M_k) \ = \ \frac{a-1 + \sigma(1)}{k} \ \ge \ 0.
\end{equation}
 By Lemma \ref{l50}, the equation 
 $ax+by = M_k$ is solvable.

With $N_k = 0$ in \eqref{e51},
we obtain $\rho(M_k) = (\Theta-1)/k$.
Meanwhile, with $N_k = 0$ in \eqref{e52}, we obtain
\begin{equation}\sigma(M_k)\ =\ \frac{a-1+\sigma(1)}{k}\ =\ \frac{ab-b-a\Theta+1}{bk}.\end{equation}
\end{proof}

\begin{thm}\label{wraparound}
    If $\rho(M_k)<\Theta$, then $1+ax+by=M_k$ is not solvable.
\end{thm}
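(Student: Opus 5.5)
We argue by contradiction. Suppose $\rho(M_k)<\Theta$ and that $1+ax+by=M_k$ has a nonnegative solution $(p,q)$, so that $ap+bq=M_k-1$. The plan is to identify $p$ with a residue determined by $\rho(M_k)$ and $\Theta$, and to derive a contradiction from the wrap-around modulo $b$ forced by $\rho(M_k)<\Theta$.

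\textbf{Step 1: $p=\rho(M_k-1)$.} Since $M_k-1\le (a-1)(b-1)$, Lemma \ref{l50} applies to $m=M_k-1$. It shows that the nonnegative solution must satisfy $p=\rho(M_k-1)$, and in particular $0\le p\le b-1$. Indeed, $ap\le M_k-1\le ab-a-b$ gives $p\le b-1$, and $ap\equiv M_k-1\pmod b$ then pins down $p$.

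\textbf{Step 2: compute $\rho(M_k-1)$.} Because $\rho(\cdot)$ is additive modulo $b$, we have
\begin{equation*}
a(\rho(M_k)-\Theta)\ \equiv\ M_k-1\pmod b.
\end{equation*}
The hypothesis $\rho(M_k)<\Theta$ means $\rho(M_k)-\Theta\in[-(b-1),-1]$. Hence the representative in $\{0,\dots,b-1\}$ is
\begin{equation*}
\rho(M_k-1)\ =\ \rho(M_k)-\Theta+b .
\end{equation*}
Correspondingly,
\begin{equation*}
\sigma(M_k-1)\ =\ \frac{M_k-1-a(\rho(M_k)-\Theta+b)}{b}\ =\ \sigma(M_k)-\sigma(1)-a .
\end{equation*}
This uses $M_k=a\rho(M_k)+b\sigma(M_k)$ and $1=a\Theta+b\sigma(1)$.

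\textbf{Step 3: show $\sigma(M_k-1)<0$.} This is the main step. By Lemma \ref{l50}, solvability is equivalent to $\sigma(M_k-1)\ge 0$, that is, $\sigma(M_k)\ge a+\sigma(1)$. We use \eqref{e52}, which was derived without assuming solvability:
\begin{equation*}
k\sigma(M_k)\ =\ a-1+\sigma(1)-aN_k .
\end{equation*}
If $k\ge 1$ and $N_k\ge 0$, this gives
\begin{equation*}
\sigma(M_k)\ \le\ \frac{a-1+\sigma(1)}{k}\ \le\ a-1+\sigma(1)\ <\ a+\sigma(1),
\end{equation*}
where the middle inequality uses $a-1+\sigma(1)\ge 0$ from \eqref{e55}. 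Therefore $\sigma(M_k-1)\le -1<0$, which contradicts Step 1. The only delicate point is making sure each inequality points the right way: \eqref{e55} gives nonnegativity of $a-1+\sigma(1)$, and Lemma \ref{lbound} gives $N_k\ge 0$. Once this is done, the argument is a short chain of inequalities.

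As a sanity check, the formula $\rho(M_k-1)=\rho(M_k)-\Theta$ in the complementary case $\rho(M_k)\ge\Theta$ reproduces the solution $(\rho(M_k)-\Theta,\ \sigma(M_k)-\sigma(1))$ in part (2) of Theorem \ref{trichotomy}. This is consistent with the present statement being the wrap-around case of that theorem.
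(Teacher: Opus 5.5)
Your proof is correct. Steps 1 and 2 match the paper: it also derives $\rho(M_k-1)=\rho(M_k)-\Theta+b$ and $\sigma(M_k-1)=\sigma(M_k)-\sigma(1)-a$, and reduces solvability to $\sigma(M_k-1)\ge 0$ via Lemma~\ref{l50}.

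The routes part ways in Step 3. The paper runs the contradiction in the opposite direction. From $\sigma(M_k-1)\ge 0$ and \eqref{e55} it gets $\sigma(M_k)\ge \sigma(1)+a\ge 1$, so $ax+by=M_k$ would also be solvable, contradicting the mutual exclusivity in Theorem~\ref{c_atmost1}. You instead bound $\sigma(M_k)$ from above, using the unconditional identity \eqref{e52} together with $N_k\ge 0$ from Lemma~\ref{lbound}. This gives $\sigma(M_k)\le (a-1+\sigma(1))/k\le a-1+\sigma(1)$, and hence $\sigma(M_k-1)\le -1$ outright.

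What each buys: your version is self-contained. It never invokes Theorem~\ref{c_atmost1}, so it avoids the detour through Corollary~\ref{k(k-1)cor}, and it shows the general fact $\sigma(M_k)\le a-1+\sigma(1)$ for every $k$. The paper's version is shorter once mutual exclusivity is available, and it needs only the crude lower estimate $\sigma(1)+a\ge 1$.

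A small presentational point: the contradiction framing and Step 1 are unnecessary. Lemma~\ref{l50} already says solvability of $1+ax+by=M_k$ is equivalent to $\sigma(M_k-1)\ge 0$, so proving $\sigma(M_k-1)<0$ finishes the argument directly.
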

\begin{proof}
    Since
    \begin{equation}
    a \rho(M_k -1) \ \equiv \ M_k - 1 \ \equiv \ a \rho(M_k) - 1 \mod b,
    \end{equation}
    multiplying by \(\Theta \) gives
    \begin{equation} \label{e56}
    \rho (M_k - 1) \ \equiv \ \rho(M_k) - \Theta \mod b.    
    \end{equation}
    Then, because \(0 \le \rho (M_k) < \Theta \le b-1\), we know
    \begin{equation}
    \rho(M_k -1) \ =\ \rho(M_k) - \Theta + b.
    \end{equation}
    We also observe that
    \begin{align}
        \sigma(M_k - 1) \
        &= \ \frac{M_k - 1 - a \rho(M_k - 1)}{b} \nonumber\\
        &= \ \frac{M_k - 1 - a \rho(M_k) + a\Theta - ab}{b} \nonumber\\
        &= \ \frac{M_k - a \rho(M_k)}{b} - \left( \frac{1 - a\Theta}{b} \right) - a\nonumber \\
        &= \ \sigma(M_k) - \sigma(1) - a.
    \end{align}
    Suppose for contradiction that \(1 + ax + by = M_k\) is solvable. By Lemma \ref{l50},
    \begin{equation}
    \sigma(M_k) - \sigma(1) - a \ \ge\ 0.
    \end{equation}
    Combining with \eqref{e55} gives
    \begin{equation}
    \sigma(M_k) \ \ge \ \sigma(1) + a \ \ge \ 1,
    \end{equation}
    so \(ax + by = M_k\) is solvable by Lemma \ref{l50}. As this contradicts Theorem \ref{c_atmost1}, we conclude that \(1 + ax + by = M_k\) is not solvable.
\end{proof}

\begin{thm}\label{m-1form}
    Suppose $\rho(M_k)\ge\Theta$. Then the equation $1+ax+by=M_k$ is solvable if and only if
    \begin{equation} \sigma(M_k) \ \ge \ \sigma(1). \end{equation}
    In that case,  the unique nonnegative solution is
    \begin{equation}
    \left( \rho(M_k) - \Theta, \ \sigma(M_k) - \sigma(1) \right).
    \end{equation}
\end{thm}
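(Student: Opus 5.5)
The plan is to reduce the equation $1+ax+by=M_k$ to $ax+by=M_k-1$ and apply Lemma \ref{l50} with $m=M_k-1$. Since $M_k-1\le (a-1)(b-1)$, Lemma \ref{l50} says that $1+ax+by=M_k$ is solvable if and only if $\sigma(M_k-1)\ge 0$. In that case $(\rho(M_k-1),\sigma(M_k-1))$ is the unique nonnegative solution. So everything comes down to computing $\rho(M_k-1)$ and $\sigma(M_k-1)$ explicitly under the hypothesis $\rho(M_k)\ge\Theta$.

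First I reuse the congruence \eqref{e56} from the proof of Theorem \ref{wraparound}:
\begin{equation*}
\rho(M_k-1)\ \equiv\ \rho(M_k)-\Theta \mod b.
\end{equation*}
Now $\rho(M_k)\ge \Theta$ and $\rho(M_k)\le b-1$, so $0\le \rho(M_k)-\Theta\le b-1-\Theta\le b-2$. Hence $\rho(M_k)-\Theta$ already lies in $\{0,\dots,b-1\}$, and uniqueness of $\rho$ gives $\rho(M_k-1)=\rho(M_k)-\Theta$ exactly, with no wraparound by $b$. This is the case complementary to Theorem \ref{wraparound}.

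Next I compute $\sigma(M_k-1)$ directly, as in the proof of Theorem \ref{wraparound} but without the $-ab$ term:
\begin{equation*}
\sigma(M_k-1)\ =\ \frac{M_k-1-a\rho(M_k)+a\Theta}{b}\ =\ \frac{M_k-a\rho(M_k)}{b}-\frac{1-a\Theta}{b}\ =\ \sigma(M_k)-\sigma(1).
\end{equation*}
By Lemma \ref{l50}, $1+ax+by=M_k$ is therefore solvable if and only if $\sigma(M_k)\ge\sigma(1)$. In that case the unique nonnegative solution is $(\rho(M_k)-\Theta,\ \sigma(M_k)-\sigma(1))$.

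There is no serious obstacle; the only care needed is the range check showing that no multiple of $b$ must be added to $\rho(M_k)-\Theta$, together with confirming that Lemma \ref{l50} applies to $m=M_k-1$. Uniqueness can also be cited from Theorem \ref{c_atmost1} or Corollary \ref{k(k-1)cor}.
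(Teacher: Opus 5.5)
Your proposal is correct and follows the paper's proof: you use the congruence \eqref{e56} with the range check $0\le\rho(M_k)-\Theta\le b-1$ to get $\rho(M_k-1)=\rho(M_k)-\Theta$, compute $\sigma(M_k-1)=\sigma(M_k)-\sigma(1)$, and conclude with Lemma \ref{l50}. Your explicit check that Lemma \ref{l50} applies to $m=M_k-1\le (a-1)(b-1)$ is a small detail the paper leaves implicit.
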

\begin{proof}

    Since \(\rho(M_k) - \Theta \in [0,b-1] \), we note from \eqref{e56} that
    \begin{equation}
    \rho(M_k - 1) \ = \ \rho(M_k) - \Theta.
    \end{equation}
    Then,
    \begin{align}
        \sigma(M_k - 1) \
        &= \ \frac{(M_k - 1) - a \rho(M_k-1)}{b} \nonumber\\
        &= \ \frac{M_k - a \rho(M_k) - (1 - a\Theta)}{b} \nonumber\\
        &= \ \sigma(M_k) - \sigma(1).
    \end{align}
    By Lemma \ref{l50}, the equation $1+ax +by = M_k$ is solvable if and only if
    \begin{equation}
    \sigma(M_k) \ \ge \ \sigma(1),
    \end{equation}
    and the unique nonnegative solution is
    \begin{equation}
    \left( \rho(M_k) - \Theta, \ \sigma(M_k) - \sigma(1) \right).
    \end{equation}
\end{proof}

\begin{proof}[Proof of Theorem \ref{trichotomy}]
    Statement (1) follows from Theorems \ref{c_atmost1} and \ref{mform}. Statement (2) follows from Theorems \ref{c_atmost1} and \ref{m-1form}. Statement (3) is immediate from Theorems \ref{mform}, \ref{wraparound}, and \ref{m-1form}. 
\end{proof}

\section{When $(a,b) = (F_n, F_{n+1})$}
Inspired by previous work \cite{CKKLMYY, C1, CGGJMS} that found the unique solution for \eqref{ee1} and \eqref{ee2} when $(a,b)= (F^i_n,F^i_{n+1})$ with $i = 1, 2, 3$, and when $a$ and $b$ are consecutive terms of Fibonacci-like sequences, we are interested in the solvability of
\begin{equation}\label{ed40}F_nx + F_{n+1}y\ =\ \frac{(F_n-1)(F_{n+1}-1)}{k}
\end{equation}
and 
\begin{equation}\label{ed41} 1+F_nx + F_{n+1}y \ =\ \frac{(F_n-1)(F_{n+1}-1)}{k}.\end{equation}

\subsection{Instances of solvability}
We demonstrate that each of the following scenarios can happen: only \eqref{ed40} is solvable; only \eqref{ed41} is solvable; neither equation is solvable. 
\begin{prop} \label{yo}
Let $n\in\mathbb{N}$ be even and let $k\in \mathbb{N}$. If $F_{n-1} \equiv 1 \mod k$, then 
$k$ divides $(F_n-1)(F_{n+1}-1)$. Furthermore, the equation
\begin{equation}
    F_{n}x+F_{n+1}y \ = \ \frac{(F_n-1)(F_{n+1}-1)}{k}
\end{equation}
    has the unique nonnegative solution
\begin{equation}
    x \ = \ y \ = \ \frac{F_{n-1}-1}{k}.
\end{equation}
\end{prop}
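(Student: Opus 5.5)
The plan is to check directly that $x=y=(F_{n-1}-1)/k$ satisfies the equation, and then obtain uniqueness from Theorem \ref{c_atmost1}, which only requires that $k$ divides $(F_n-1)(F_{n+1}-1)$. Since $\gcd(F_n,F_{n+1})=1$, the hypotheses on $(a,b)$ hold once $F_n\ge 2$; the degenerate small cases (for example $n=2$, where $F_2=1$) can be handled by hand, or dispatched via Theorem \ref{m_exactly1}'s uniqueness argument, which does not really need $a\ge 2$.

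The central step is the identity
\begin{equation*}
(F_n-1)(F_{n+1}-1) \ =\ (F_n+F_{n+1})(F_{n-1}-1) \ =\ F_{n+2}(F_{n-1}-1) \quad \mbox{ for even } n.
\end{equation*}
To prove it, expand the left side as $F_nF_{n+1}-F_{n+2}+1$ and the right side as $F_{n+2}F_{n-1}-F_{n+2}$. It then suffices to show $F_{n+2}F_{n-1}-F_nF_{n+1}=1$ when $n$ is even. This follows from the d'Ocagne/Vajda-type identity $F_{n+2}F_{n-1}-F_nF_{n+1}=(-1)^n$. One derivation: write $F_{n+2}=F_{n+1}+F_n$ and $F_{n-1}=F_{n+1}-F_n$, so the product becomes $F_{n+1}^2-F_n^2-F_nF_{n+1} = F_{n+1}(F_{n+1}-F_n)-F_n^2 = F_{n+1}F_{n-1}-F_n^2$. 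By Cassini's identity this equals $(-1)^n$, which is $1$ for even $n$. This step is the only real content; I expect it to be the main (though mild) obstacle, mostly in keeping the sign conventions straight.

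Given the identity, divisibility is immediate: $F_{n-1}\equiv 1 \mod k$ means $k \mid F_{n-1}-1$, hence $k \mid F_{n+2}(F_{n-1}-1)=(F_n-1)(F_{n+1}-1)$. Setting $x=y=(F_{n-1}-1)/k$, which is a nonnegative integer since $F_{n-1}\ge 1$, gives
\begin{equation*}
F_nx+F_{n+1}y \ =\ \frac{F_{n+2}(F_{n-1}-1)}{k} \ =\ \frac{(F_n-1)(F_{n+1}-1)}{k},
\end{equation*}
so this is a nonnegative solution. Uniqueness then follows from Theorem \ref{c_atmost1}, or alternatively from Lemma \ref{l50}, since $M_k\le (a-1)(b-1)$.
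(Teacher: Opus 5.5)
Your proof is correct and follows essentially the same route as the paper: both use Cassini's identity to establish $(F_n-1)(F_{n+1}-1)=(F_n+F_{n+1})(F_{n-1}-1)$ for even $n$, and then read off $x=y=(F_{n-1}-1)/k$. You also justify uniqueness explicitly, via Theorem \ref{c_atmost1} with the degenerate case $n=2$ checked by hand, which fills in a step the paper leaves implicit.
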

\begin{proof}
The first statement follows immediately from the second statement. We prove the second statement. Using Cassini's identity, which  states that
$F_{n+1}F_{n-1}-F_n^2=(-1)^n$ (see \cite[Theorem 5.3]{Koshy2001}),
we have

\begin{align}
    \frac{(F_n-1)(F_{n+1}-1)}{k} &\ = \ \frac{F_nF_{n+1}-F_{n+1}-F_n+1}{k}\nonumber\\
    &\ =\ \frac{(F_{n+1}-F_{n-1})F_{n+1}-F_{n+1}-F_n+1}{k}\nonumber\\
    &\ =\ \frac{F_{n+1}^2 - (F_{n-1}F_{n+1}-1) - F_{n+1}-F_n}{k}\nonumber\\
    & \ = \ \frac{F_{n+1}^2- F_n^2-F_{n+1}-F_n}{k}\nonumber\\
    &\ = \ \frac{(F_{n+1}+ F_n)(F_{n+1}-F_n-1)}{k}\nonumber\\
    &\ = \  F_n\cdot\frac{F_{n-1}-1}{k} + F_{n+1}\cdot\frac{F_{n-1}-1}{k}.
\end{align}
\end{proof}

\begin{prop}\label{mo(n-2)n}
    Let $n\in \mathbb{N}$, and let $k\ge 3$. If  $F_{n} \equiv F_{n+1}\equiv 1 \mod k$, then 
    \begin{equation}
    F_{n}x+F_{n+1}y\ =\ \frac{(F_n-1)(F_{n+1}-1)}{k}
    \end{equation}
    has the unique nonnegative solution
    \begin{equation}
    x\ =\ \frac{F_{n}-1}{k} \quad \text{ and } \quad y\ =\ \frac{F_{n-2}-1}{k}.
    \end{equation}
\end{prop}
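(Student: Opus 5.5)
Since $F_n\equiv F_{n+1}\equiv 1 \bmod k$, we have $F_{n-1}=F_{n+1}-F_n\equiv 0$ and $F_{n-2}=F_n-F_{n-1}\equiv 1 \bmod k$. Hence $(F_n-1)/k$ and $(F_{n-2}-1)/k$ are integers, and $k$ divides $(F_n-1)(F_{n+1}-1)$.

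The plan has three steps: verify the algebraic identity
\begin{equation*}
(F_n-1)(F_{n+1}-1)\ =\ F_n(F_n-1)+F_{n+1}(F_{n-2}-1),
\end{equation*}
check that both coordinates are nonnegative, and invoke Theorem \ref{c_atmost1} for uniqueness.

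For the identity, expand both sides. The left side is $F_nF_{n+1}-F_n-F_{n+1}+1$, and the right side is $F_n^2-F_n+F_{n+1}F_{n-2}-F_{n+1}$. After cancelling $-F_n-F_{n+1}$, it suffices to show
\begin{equation*}
F_nF_{n+1}+1\ =\ F_n^2+F_{n+1}F_{n-2}.
\end{equation*}
Substituting $F_{n-2}=F_n-F_{n-1}$ gives $F_{n+1}F_{n-2}=F_nF_{n+1}-F_{n+1}F_{n-1}$, so the requirement becomes
\begin{equation*}
F_{n+1}F_{n-1}-F_n^2\ =\ -1,
\end{equation*}
which is Cassini's identity with $(-1)^n=-1$, i.e.\ with $n$ odd. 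So the identity holds exactly when $n$ is odd, and the key remaining step is to show that the congruence hypotheses with $k\ge 3$ force $n$ to be odd. I expect this parity argument to be the main obstacle.

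The idea is to apply Cassini's identity modulo $k$. Since $F_{n+1}\equiv 1$, $F_{n-1}\equiv 0$ and $F_n\equiv 1 \bmod k$,
\begin{equation*}
(-1)^n\ =\ F_{n+1}F_{n-1}-F_n^2\ \equiv\ 1\cdot 0-1\ =\ -1 \mod k.
\end{equation*}
If $n$ were even, this would give $1\equiv -1 \bmod k$, so $k\mid 2$, contradicting $k\ge 3$. Hence $n$ is odd, and this is precisely where the hypothesis $k\ge 3$ is used. Dividing the identity by $k$ then shows that $x=(F_n-1)/k$ and $y=(F_{n-2}-1)/k$ solve the equation.

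For nonnegativity, $F_n\ge 1$ gives $x\ge 0$. For $y$, we need $F_{n-2}\ge 1$. The hypothesis rules out the small-index cases: $F_{n-2}\equiv 1 \bmod k$ excludes $F_{n-2}=0$, which occurs only for $n=2$, and $n$ is odd in any case. For $n=1$ we get $F_{-1}=1$, which is fine. With $n\ge 1$ odd, $F_{n-2}\in\{1,1,2,\dots\}$, so $y\ge 0$.

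Uniqueness then follows from Theorem \ref{c_atmost1}, or equivalently from Lemma \ref{l50}, since the right-hand side is at most $(a-1)(b-1)$. Note that $\gcd(F_n,F_{n+1})=1$; the degenerate cases $F_n=1$ may have to be handled directly, or else excluded because $F_n\equiv1$ and $F_{n+1}\equiv 1$ with $k\ge3$ and the paper assumes $a,b\ge2$.
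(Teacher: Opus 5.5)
Your proposal is correct and follows the paper's route: first show $n$ is odd, then verify $(F_n-1)(F_{n+1}-1)=F_n(F_n-1)+F_{n+1}(F_{n-2}-1)$ via Cassini's identity. Your derivation of oddness from Cassini's identity modulo $k$ (using $F_{n-1}\equiv 0$) is more self-contained than the paper's appeal to the evenness of the Pisano period for $k\ge 3$, and you make explicit the nonnegativity and uniqueness checks that the paper leaves implicit. One small point: the degenerate case $n=1$ cannot be excluded by the hypotheses, since they hold for every $k$ because $F_1=F_2=1$. It is, however, trivial, because $x+y=0$ has only the solution $(0,0)$. For odd $n\ge 3$ you have $F_n\ge 2$, so Theorem \ref{c_atmost1} applies.
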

\begin{proof} 
Note that $F_1 = F_2 \equiv 1\mod k$, and since $k\ge 3$, the Pisano period\footnote{The \emph{Pisano period} modulo $k$ is the period of the Fibonacci sequence modulo $k$. Its existence follows from the pigeonhole principle applied to the finitely many pairs $(F_n,F_{n+1}) \mod k$; once such a pair repeats, the recurrence determines all subsequent terms periodically. See, for example, Wall~\cite{Wa}.} is even. Hence, the hypothesis implies that $n$ must be odd.
Using Cassini's identity, we have
\begin{align} 
    \frac{(F_n-1)(F_{n+1}-1)}{k} &\ = \  \frac{(F_n-1)(F_{n+1}-1)-F_n(F_n-1)+F_n(F_n-1)}{k}\nonumber\\
    &\ = \  \frac{(F_{n}-1)(F_{n-1}-1)}{k}+\frac{F_n-1}{k}F_n\nonumber\\
    &\ = \ \frac{F_{n}F_{n-1}-F_{n-1}-F_n+1}{k}+\frac{F_n-1}{k}F_n\nonumber\\
    &\ =\ \frac{(F_{n+1}-F_{n-1})F_{n-1}-F_{n-1}-F_n+1}{k} + \frac{F_n-1}{k}F_n\nonumber\\
    &\ = \ \frac{F_{n}^2-F_{n-1}^2-F_{n-1}-F_n}{k}+\frac{F_n-1}{k}F_n\nonumber\\
    & \ = \ \frac{(F_{n}-F_{n-1}-1)(F_{n-1}+F_n)}{k}+\frac{F_n-1}{k}F_n\nonumber\\
    &\ = \ F_n\cdot \frac{F_n-1}{k} + F_{n+1}\cdot \frac{F_{n-2}-1}{k}.
\end{align}
\end{proof}

In Propositions \ref{yo} and \ref{mo(n-2)n}, Equation \eqref{ed40} is solvable; the next proposition gives the values of $n$ such that \eqref{ed41} is solvable when $k = 5$. 
\begin{prop}\label{prop:k-five}
The Diophantine equation
\begin{equation}\label{eq:k-five}
1+F_nx+F_{n+1}y
\ =\ 
\frac{(F_n-1)(F_{n+1}-1)}{5}
\end{equation}
is solvable if and only if
\begin{equation}
n\equiv 7 \mod{20}
\qquad\text{or}\qquad
n\equiv 8 \mod{20}.
\end{equation}

More precisely, if $n\equiv7\mod{20}$, then
\begin{equation}
(x, y)\ =\
\left(\frac{3F_{n+1}-4F_n-1}{5}, \frac{4F_{n-1}-2F_n-1}{5}\right),
\end{equation}
whereas if $n\equiv8\mod{20}$, then
\begin{equation}
(x, y)\ =\
 \left(\frac{4F_n - 2F_{n+1}-1}{5}, \frac{3F_n-4F_{n-1}-1}{5}\right).
\end{equation}
\end{prop}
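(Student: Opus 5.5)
The plan has four steps: reduce to residues of $n$ modulo $20$, verify the two claimed solutions directly, rule out the remaining residues with the propositions already proved, and handle the leftover residues $18,19$ with Corollary \ref{k(k-1)cor}.

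\textbf{Reduction to residues.} The equation only makes sense when $5$ divides $(F_n-1)(F_{n+1}-1)$, so first I would pin down those $n$. The Fibonacci sequence modulo $5$ has Pisano period $20$. For $n=0,1,\dots,19$ the residues are
$$0,1,1,2,3,0,3,3,1,4,0,4,4,3,2,0,2,2,4,1.$$
So $F_n\equiv 1 \pmod 5$ exactly when $n\equiv 1,2,8,19 \pmod{20}$. Combining this with the condition $F_{n+1}\equiv 1$, the admissible classes are
$$n \bmod 20 \in \{0,1,2,7,8,18,19\}.$$
For the remaining classes there is nothing to prove, since the right-hand side is not an integer.

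\textbf{Sufficiency ($n\equiv 7,8 \pmod{20}$).} I would check the stated pairs directly, in three parts.
\begin{itemize}
\item \emph{Integrality.} For $n\equiv 7$ we have $(F_{n-1},F_n,F_{n+1})\equiv(3,3,1)$, and for $n\equiv 8$ we have $(3,1,4)$. In both cases each numerator $3F_{n+1}-4F_n-1$, $4F_{n-1}-2F_n-1$, etc.\ is $\equiv 0 \pmod 5$.
\item \emph{Nonnegativity.} This follows from growth estimates such as $3F_{n+1}-4F_n = 3F_{n-1}-F_n = 2F_{n-1}-F_{n-2}\ge 1$. The analogous rewritings handle the other coordinates, using $n\ge 7$.
\item \emph{The identity.} Expand
$$5+5F_nx+5F_{n+1}y \;=\; (F_n-1)(F_{n+1}-1),$$
rewrite everything in terms of $F_{n-1},F_n$, and apply Cassini's identity $F_{n+1}F_{n-1}-F_n^2=(-1)^n$. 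The parity of $n$ matters here: $n\equiv 7$ is odd and $n\equiv 8$ is even, which is why the two formulas differ.
\end{itemize}
Uniqueness then comes from Theorem \ref{c_atmost1}.

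\textbf{Necessity, first residues ($n\equiv 0,1,2 \pmod{20}$).} I would show that $F_nx+F_{n+1}y=M_5$ is solvable, so that Theorem \ref{c_atmost1} forbids the other equation.
\begin{itemize}
\item For $n\equiv 1$, we have $F_n\equiv F_{n+1}\equiv 1 \pmod 5$, so Proposition \ref{mo(n-2)n} applies with $k=5$.
\item For $n\equiv 0$ and $n\equiv 2$, $n$ is even and $F_{n-1}\equiv 1 \pmod 5$, since $F_{19}\equiv F_1\equiv 1$. So Proposition \ref{yo} applies.
\end{itemize}

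\textbf{Necessity, remaining residues ($n\equiv 18,19 \pmod{20}$).} These are the main obstacle, since neither proposition applies. Here $F_{n+1}\equiv 1$ and $F_n\equiv 4$ (for $n\equiv 18$), or $F_n\equiv 1$ and $F_{n+1}\equiv 2$ (for $n\equiv 19$).
\begin{itemize}
\item \emph{First attempt.} By Corollary \ref{k(k-1)cor}, it suffices to exhibit a nonnegative representation $F_nx+F_{n+1}y=4M_5$. Mimicking the Cassini manipulations of Propositions \ref{yo} and \ref{mo(n-2)n}, I would guess $x,y$ as small integer combinations of $F_{n-1},F_n,F_{n+1}$ divided by $5$, for instance of the form $(cF_{j}-d)/5$. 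I would then solve for the coefficients by matching the top-order terms and the constant.
\item \emph{Fallback.} If no clean closed form appears, I would use Theorem \ref{wraparound}/Theorem \ref{m-1form}. From Cassini, $\Theta(F_n,F_{n+1})$ equals $F_{n-1}$ or $F_{n+1}-F_{n-1}=F_n$, according to the parity of $n$. The residue $\rho(M_5)$ can be computed from $5\rho(M_5)\equiv \Theta-1 \pmod{F_{n+1}}$, as in Lemma \ref{lbound}: pick the $N_5\in[0,4]$ making $(N_5F_{n+1}+\Theta-1)/5$ integral. Then either $\rho(M_5)<\Theta$, or the required inequality $\sigma(M_5)\ge\sigma(1)$ fails.
\end{itemize}
Doing this bookkeeping with the right parity of $n$ in each class is where I expect the real work to be.
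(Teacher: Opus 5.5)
\textbf{Gap: necessity for $n\equiv 18,19\pmod{20}$ is not proved.}

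Everything you actually carry out is correct:
\begin{itemize}
\item the residues modulo $5$;
\item the list $\{0,1,2,7,8,18,19\}$ of classes in which $M_5$ is an integer;
\item the direct verification of both formulas (integrality, nonnegativity, and the Cassini computation);
\item the elimination of $n\equiv 0,1,2\pmod{20}$ through Propositions \ref{yo} and \ref{mo(n-2)n} together with Theorem \ref{c_atmost1}.
\end{itemize}
For $n\equiv 18,19\pmod{20}$, however, you only describe what you would try. Moreover, your data for $n\equiv 19$ is wrong: $F_{n+1}\equiv F_{20}\equiv F_0\equiv 0\pmod 5$, not $2$. This also breaks your fallback in that class. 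Since $n$ is odd there, $\Theta=F_n$, so $\Theta-1\equiv 0\pmod 5$; together with $5\mid F_{n+1}$, every $N_5\in[0,4]$ makes $(N_5F_{n+1}+\Theta-1)/5$ integral. Hence the congruence $5\rho(M_5)\equiv\Theta-1\pmod{F_{n+1}}$ does not determine $\rho(M_5)$.

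\textbf{How to close both classes.}

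For $n\equiv 19$, the correct residues $F_n\equiv 1$, $F_{n+1}\equiv 0\pmod 5$ are exactly the hypotheses of Proposition \ref{yoyo} with $k=5$. So neither equation is solvable. This is the proposition you overlooked when you wrote that none applies.

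For $n\equiv 18$ ($n$ even, $(F_{n-2},F_{n-1},F_n,F_{n+1})\equiv(2,2,4,1)$), your fallback works:
\begin{itemize}
\item Cassini gives $\Theta=F_{n-1}$ and $\sigma(1)=-F_{n-2}$.
\item Lemma \ref{lbound} and $5\mid N_5F_{n+1}+F_{n-1}-1$ force $N_5=4$, so $\rho(M_5)=(4F_{n+1}+F_{n-1}-1)/5\ge\Theta$.
\item Then \eqref{e52} gives $5\sigma(M_5)=-3F_n-1-F_{n-2}$.
\item So $\sigma(M_5)\ge\sigma(1)$ would require $4F_{n-2}\ge 3F_n+1$, which is impossible since $F_n\ge 2F_{n-2}$. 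Theorem \ref{m-1form} finishes.
\end{itemize}
Your first attempt also succeeds. For even $n$,
\[
4M_5=F_n\cdot\frac{5F_{n-1}+F_n-4}{5}+F_{n+1}\cdot\frac{3F_{n-1}-F_{n-2}-4}{5},
\]
and both coefficients are nonnegative integers when $n\equiv 18\pmod{20}$. So Corollary \ref{k(k-1)cor} applies.

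\textbf{Comparison with the paper.}

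Once completed, your argument is a legitimate alternative to the paper's proof. The paper simply specializes Theorem \ref{thm:admissible-k} to $k=5$, $r=2$: the scenario congruences force $(F_n,F_{n+1})\equiv(3,1)$ for odd $n$ and $(1,4)$ for even $n$. This handles all residues uniformly and reads off the solutions from \eqref{sol_2} and \eqref{sol_1}. Your route needs more case work. In exchange, it does not depend on that general theorem, whose odd-$n$ converse the paper omits, because you verify the solutions directly.
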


Proposition \ref{prop:k-five} is a corollary of a more comprehensive result by ChatGPT (GPT-5.6 Sol) \cite{GPT}, which 
correctly characterizes all values of $k$ that make \eqref{ed41} solvable. The authors verify and edit GPT-5.6 Sol's contributions which include Proposition \ref{prop:k-one}, Lemma \ref{lem:floor-fibonacci}, Theorem \ref{thm:admissible-k}, Corollary \ref{ce1}, Corollary \ref{cor:necessary-k}, and Remark \ref{re1} in the next subsection.

We end the current subsection with a situation when neither equation is solvable. To that end, we recall a nonexistence result of Chu \cite{C1}.

\begin{lem}\cite[Lemma 2.1]{C1}\label{chu}
    For $a, b\in\mathbb{N}$ with $\gcd(a,b) = 1$, the equation $ax+by = n$ is not solvable if there exist
    integers $r<b$ and $s < 0$ such that $ar+bs = n$.
\end{lem}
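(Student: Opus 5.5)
Suppose, for contradiction, that $ax+by=n$ has a nonnegative solution $(x,y)$, while also $ar+bs=n$ with integers $r<b$ and $s<0$. Subtracting gives
\begin{equation}
a(x-r)\ =\ b(s-y).
\end{equation}
Since $\gcd(a,b)=1$, $b$ divides $x-r$. Write $x = r + bt$ for some integer $t$. Then $at = s-y$, that is, $y = s - at$.

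The plan is to split on the sign of $t$ and get a contradiction in each case.

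\textbf{Case $t\le 0$.} Then $-at \ge 0$ is not useful on its own, so we use $x$ instead. We have $x = r+bt \le r < b$. We still need a contradiction, and the natural constraint is $y\ge 0$. Since $s<0$ and $t\le 0$, $y = s - at = s + a|t|$, and this could be nonnegative. So the contradiction must come from $x\ge 0$.

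\textbf{Case $t\ge 1$.} Then $y = s - at \le s - a < 0$, which contradicts $y\ge 0$.

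The remaining case $t\le 0$ is the crux. For $t=0$ we get $y=s<0$, a contradiction. For $t\le -1$ we get $x = r + bt \le r - b < 0$, using $r<b$, again a contradiction.

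So in every case one of $x,y$ is negative. Hence $t$ cannot exist, and the equation is not solvable. The only delicate point is organizing the cases: $t\ge 1$ forces $y<0$ via $s<0$, $t=0$ gives $y=s<0$, and $t\le -1$ forces $x<0$ via $r<b$. No earlier result of the paper is needed beyond $\gcd(a,b)=1$.
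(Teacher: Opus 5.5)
Your proof is correct and is the standard argument. The paper does not prove this lemma itself; it cites \cite{C1}. Its example $3x+5y=4$ illustrates exactly your parametrization $(x,y)=(r+bt,\,s-at)$: $t\ge 0$ forces $y\le s<0$, and $t\le -1$ forces $x\le r-b<0$.

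The only thing to fix is presentation. Your first ``Case $t\le 0$'' paragraph wrongly claims the contradiction there must come from $x\ge 0$, whereas for $t=0$ it actually comes from $y=s<0$. Drop that false start and split directly into $t\ge 0$ and $t\le -1$.
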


\begin{exa}\normalfont
    The equation $3x + 5y = 4$ has the solution $(x,y) = (3, -1)$. Note that all solutions take the form $(-5n-2, 3n+2)$ with $n\in \mathbb{Z}$, so the equation is not solvable. This is what Lemma \ref{chu} guarantees. 
\end{exa}

\begin{prop}\label{yoyo}
    Let $n\in \mathbb{N}$ and let $k\ge 3$. If $F_{n}\equiv1 \mod k$, and $F_{n+1}\equiv 0 \mod k$, then neither of the equations
    \begin{equation}\label{john}
F_nx+ F_{n+1}y \ = \ \frac{(F_n-1)(F_{n+1}-1)}{k} \quad \text{ and } \quad 1 + F_nx+ F_{n+1}y \ = \ \frac{(F_n-1)(F_{n+1}-1)}{k}
     \end{equation}
is solvable.
\end{prop}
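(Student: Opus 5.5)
We will work inside the framework of Theorem \ref{trichotomy}, with $a=F_n$, $b=F_{n+1}$ and $M_k=(a-1)(b-1)/k$. The plan is to compute $\Theta$ and $\rho(M_k)$ explicitly. We then show that $k\rho(M_k)\neq\Theta-1$, which rules out $ax+by=M_k$ by Theorem \ref{mform}, and that $\rho(M_k)<\Theta$, which rules out $1+ax+by=M_k$ by Theorem \ref{wraparound}. From the hypotheses we record $F_{n-1}=F_{n+1}-F_n\equiv -1\mod k$, and that $k\mid b$ while $k\mid a-1$.

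\textbf{Step 1: compute $\Theta$ and rewrite $M_k$.} By Cassini's identity, $F_n^2=F_{n+1}F_{n-1}-(-1)^n$.
\begin{itemize}
\item For $n$ odd this gives $a^2\equiv 1\mod b$, so $\Theta=F_n=a$.
\item For $n$ even it gives $a(b-a)\equiv 1\mod b$, so $\Theta=F_{n-1}=b-a$.
\end{itemize}
Since $(a-1)/k$ is an integer, we have $M_k=\frac{a-1}{k}\,b-\frac{a-1}{k}\equiv -\frac{a-1}{k}\mod b$. Multiplying by $\Theta$ gives $\rho(M_k)\equiv -\Theta\cdot\frac{a-1}{k}\mod b$. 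We cannot simply divide by $k$ modulo $b$, because $k\mid b$, so the product $\Theta(a-1)/k$ has to be reduced by hand using Cassini again.

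\textbf{Step 2 (main computation, $n$ odd).} Write $a(a-1)=bF_{n-1}+1-a$. Then
\[
\frac{a(a-1)}{k}=\frac{b}{k}F_{n-1}-\frac{a-1}{k}.
\]
Since $F_{n-1}\equiv k-1\mod k$, we get $\frac{b}{k}F_{n-1}\equiv\frac{b}{k}(k-1)\mod b$. Hence
\[
\rho(M_k)\equiv \frac{a-1}{k}+\frac{b}{k}\mod b.
\]
The right-hand side already lies in $[0,b-1]$, so $\rho(M_k)=(a+b-1)/k=(F_{n+2}-1)/k$.
\begin{itemize}
\item Then $k\rho(M_k)=a+b-1\neq a-1=\Theta-1$, so $ax+by=M_k$ is not solvable.
\item Since $b<2a$ (for $n\ge 2$) and $k\ge 3$, we get $a+b-1<3a\le ka$. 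Hence $\rho(M_k)<a=\Theta$, and Theorem \ref{wraparound} rules out the second equation.
\end{itemize}
Small $n$ can be checked directly.

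\textbf{Step 3 ($n$ even).} Here $\Theta-1=F_{n-1}-1\equiv -2\not\equiv 0\mod k$ because $k\ge 3$. So $k\nmid(\Theta-1)$, and Corollary \ref{solexist} (or Theorem \ref{mform}) immediately rules out $ax+by=M_k$. For the second equation, we compute $\rho(M_k)\equiv -(b-a)\frac{a-1}{k}\mod b$ in the same way as in Step 2. Using $a(a-1)=bF_{n-1}-1-a$ in this parity, we reduce to an explicit value in $[0,b-1]$, expected to be of the form $(c_1a+c_2b+c_3)/k$ with small constants. We then compare it with $\Theta=b-a$. If $\rho(M_k)<\Theta$, Theorem \ref{wraparound} finishes. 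Otherwise, we use Theorem \ref{m-1form} and check that $\sigma(M_k)<\sigma(1)$ via the identity \eqref{e52}. Carrying out this reduction cleanly in the even case is where I expect the main difficulty. As a fallback for either parity, we can exhibit an explicit representation $M_k-1=ar+bs$ with $r<b$ and $s<0$ and apply Lemma \ref{chu}.
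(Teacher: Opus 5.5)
Your Step 2 is correct. For odd $n$ you obtain $\Theta=F_n$ and $\rho(M_k)=(F_{n+2}-1)/k$. Hence $k\rho(M_k)=(\Theta-1)+F_{n+1}\neq\Theta-1$ and $\rho(M_k)<\Theta$, so Theorems \ref{mform} and \ref{wraparound} rule out both equations. No small cases need checking, since the hypotheses exclude $n\le 4$ and so $a,b\ge 2$ as Section 2 requires. Also, $b<2a$ fails at $n=2$, but $b\le 2a$ is all you use.

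The gap is Step 3. You never settle the second equation for even $n$, and the reduction you sketch cannot produce an explicit value, because that case is empty. For even $n$, Cassini's identity reads $F_n^2=F_{n+1}F_{n-1}-1$. Reducing modulo $k$ with $F_n\equiv 1$ and $F_{n+1}\equiv 0$ gives $1\equiv -1\pmod{k}$, i.e.\ $k\mid 2$, contradicting $k\ge 3$. You would have hit this at once: in your identity $a(a-1)=bF_{n-1}-1-a$, both $a(a-1)/k$ and $bF_{n-1}/k$ are integers, which forces $k\mid a+1$, yet $a+1\equiv 2\pmod{k}$. The paper records the same fact via the evenness of the Pisano period for $k\ge 3$: $(F_n,F_{n+1})\equiv(F_{-1},F_0)\pmod{k}$ forces $n\equiv -1$ modulo that period. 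With this observation you can delete Step 3 and its fallback, and Step 2 alone proves the proposition.

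Once repaired, your argument differs from the paper's only in packaging. Both rest on the same representation $M_k=\frac{F_{n+2}-1}{k}F_n-\frac{F_{n-1}+1}{k}F_{n+1}$, i.e.\ $(\rho(M_k),\sigma(M_k))$ with $\sigma(M_k)<0$.

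The paper verifies this representation by Cassini algebra and applies Lemma \ref{chu}. It then shifts by $(F_{n-1},-F_{n-2})$, using $F_{n-1}F_n-F_{n-2}F_{n+1}=-1$, to get a representation of $M_k-1$ with $r<F_{n+1}$ and $s<0$, and applies Lemma \ref{chu} again.

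You derive $\rho(M_k)$ by modular reduction. You then let Theorem \ref{wraparound} perform that same shift (note $b-\Theta=F_{n-1}$), combined with Theorem \ref{c_atmost1}. Your route reuses the Section 2 machinery and avoids the generalized Cassini identity; the paper's is more self-contained.
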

\begin{proof}
Since $F_{-1}\equiv 1\mod k$ and $F_{0}\equiv 0\mod k$, and the fact that the Pisano period modulo $k$ is even, we know that $n$ must be odd. 

    We show the nonexistence of a nonnegative solution to each equation by finding a solution $(r,s)$ where $r<F_{n+1}$ and $s<0$. Then by Lemma \ref{chu}, the claim holds.

    By Cassini's identity, we have
    \begin{align}
    \frac{(F_{n+1}-1)(F_{n}-1)}{k} & \ = \ \frac{F_{n+1}F_{n}-F_{n+1}-F_{n}+1}{k}\nonumber\\
    & \ = \ \frac{F_{n+1}F_{n}-F_{n+1}-F_{n}+(F_{n}^2-F_{n-1}F_{n+1})}{k}\nonumber\\
    & \ = \ \frac{F_{n+2}F_{n}-F_{n+1}-F_{n}-F_{n-1}F_{n+1}}{k}\nonumber\\
    & \ = \ \frac{F_{n+2}-1}{k}F_{n}  +  \frac{-(F_{n-1}+1)}{k}F_{n+1}.
\end{align}
Note that $F_{n+2} = F_{n} + F_{n+1}\equiv 1\mod k$, so $(F_{n+2}-1)/k$ is an integer. Similarly,
$F_{n-1} = F_{n+1} - F_n \equiv - 1\mod k$, so $-(F_{n-1} + 1)/k$ is a negative integer. 
Moreover, 
\begin{equation}\frac{F_{n+2}-1}{k}\ \le\ \frac{F_{n}+F_{n+1}-1}{3}\ <\ F_{n+1}.\end{equation}
Therefore, the equation $F_nx+ F_{n+1}y  = (F_n-1)(F_{n+1}-1)/k$ is not solvable.

Note that  
\begin{align}
F_{n-1}F_n - F_{n-2}F_{n+1}&\ =\ F_{n-1}F_n - F_{n-2}(F_{n-1} + F_n)\nonumber\\
&\ =\ F_nF_{n-3} - F_{n-2}F_{n-1}\ =\ (-1)^{n+1}F_{-1}F_{-2}\ = \ -1,
\end{align}
where the next-to-last equality is due to the generalized Cassini's identity $F_nF_{n+i+j}-F_{n+i}F_{n+j} = (-1)^{n+1} F_iF_j$. Hence, an integral solution to $1 + F_nx + F_{n+1}y = (F_n-1)(F_{n+1}-1)/k$ is 
\begin{equation}\left(\frac{F_{n+2}-1}{k}+F_{n-1}, -\frac{F_{n-1}+1}{k}-F_{n-2}\right).\end{equation}
We have
\begin{align}
\left(\frac{F_{n+2}-1}{k}+F_{n-1}\right) - F_{n+1}&\ \le\ \frac{F_{n+2}-1}{3} - F_n\nonumber\\
&\ =\ \frac{F_{n+2}-1-3F_n}{3}\nonumber\\
&\ =\ \frac{F_{n+1}-1-2F_n}{3}\nonumber\\
&\ =\ \frac{F_{n-1}-1-F_n}{3}\ <\ 0.
\end{align}
By Lemma \ref{chu}, the equation $1 + F_n x + F_{n+1}y = (F_n-1)(F_{n+1}-1)/2$ is also unsolvable.  
\end{proof}

We have for every $m\ge 1$, 
\begin{equation}\label{le1}
F_{2m-1}\ =\ F_m^2 + F_{m-1}^2, \mbox{ so } F_{2m-1}\ \equiv\ F_{m-1}^2 \mod F_m
\end{equation}
(see \cite[Pg.~ 25, Identity (11)]{Vajda1989}).
Moreover, by Cassini's identity, 
\begin{equation}F_{m+1}F_{m-1} - F_m^2 \ =\ (-1)^m, \mbox{ so } (F_{m-1}+F_m)F_{m-1}\ \equiv\ (-1)^m \mod F_m.\end{equation}
Hence, 
\begin{equation}\label{le2} F_{m-1}^2 \ \equiv\ (-1)^m \mod F_m.\end{equation}
From \eqref{le1} and \eqref{le2}, we have $F_{2m-1}\ \equiv\ (-1)^m \mod F_m$.
Choosing $m = 2u$, with $u\ge 1$, we obtain 
\begin{equation}\label{le3}F_{4u-1}\ \equiv\ 1\mod F_{2u}.\end{equation}

We now apply Proposition \ref{yoyo} to determine the solvability of the system
\begin{equation}\begin{cases}F_{4u-1} x + F_{4u}y \ =\ (F_{4u-1}-1)(F_{4u}-1)/F_{2u},\\ 1 + F_{4u-1}x + F_{4u}y\ =\ (F_{4u-1}-1)(F_{4u}-1)/F_{2u}.\end{cases}\end{equation}

\begin{cor}
 The only value of $u\in \mathbb{N}$ that makes the system
 \begin{equation}\begin{cases}F_{4u-1}x + F_{4u} y \ =\ (F_{4u-1}-1)(F_{4u}-1)/F_{2u},\\ 1 + F_{4u-1}x + F_{4u} y \ =\ (F_{4u-1}-1)(F_{4u}-1)/F_{2u}.\end{cases}\end{equation}
 solvable is $u = 1$.
\end{cor}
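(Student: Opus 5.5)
The plan is to reduce everything to Proposition \ref{yoyo} for $u\ge 2$, and to handle $u=1$ by direct computation. Here I read ``the system is solvable'' as ``at least one of the two equations is solvable,'' which is how Proposition \ref{yoyo} is phrased.

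\textbf{Case $u\ge 2$.} Set $n=4u-1$ and $k=F_{2u}$ in Proposition \ref{yoyo}. Three hypotheses need checking.
\begin{enumerate}
\item $k\ge 3$: since $u\ge 2$, we have $F_{2u}\ge F_4=3$.
\item $F_n\equiv 1 \mod k$: this is exactly \eqref{le3}, i.e.\ $F_{4u-1}\equiv 1\mod F_{2u}$.
\item $F_{n+1}\equiv 0\mod k$: this is $F_{2u}\mid F_{4u}$. It follows from the standard fact that $F_m\mid F_{mj}$, or directly from $F_{4u}=F_{2u}(F_{2u-1}+F_{2u+1})$ (the identity $F_{2m}=F_mL_m$).
\end{enumerate}
The third point also gives $F_{2u}\mid (F_{4u-1}-1)(F_{4u}-1)$, so the right-hand side of the system is an integer and the system is well defined. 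Proposition \ref{yoyo} then says neither equation is solvable, so no $u\ge 2$ works.

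\textbf{Case $u=1$.} Here $(F_3,F_4)=(2,3)$ and $F_2=1$, so the right-hand side is $(2-1)(3-1)/1=2$. The first equation $2x+3y=2$ has the nonnegative solution $(x,y)=(1,0)$, so the system is solvable. Consistent with Theorem \ref{c_atmost1}, the second equation $1+2x+3y=2$ has no nonnegative solution.

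\textbf{Expected obstacle.} The only point not already recorded in the paper is the divisibility $F_{2u}\mid F_{4u}$. I will cite it as a standard Fibonacci divisibility property (e.g.\ from \cite{Koshy2001}) or justify it with the one-line identity $F_{2m}=F_m(F_{m-1}+F_{m+1})$.

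A second point to state explicitly is why $u=1$ escapes Proposition \ref{yoyo}: there $k=1<3$, so the proposition does not apply.
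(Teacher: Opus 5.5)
Your proposal is correct and follows the paper's proof essentially step for step. You check $u=1$ directly via the solution $(1,0)$ of $2x+3y=2$. For $u\ge 2$ you verify $F_{2u}\ge 3$, $F_{4u-1}\equiv 1\mod F_{2u}$ from \eqref{le3}, and $F_{2u}\mid F_{4u}$, then invoke Proposition \ref{yoyo}. The only cosmetic difference is that you justify $F_{2u}\mid F_{4u}$ via $F_{2m}=F_m(F_{m-1}+F_{m+1})$, whereas the paper simply appeals to $2u\mid 4u$.
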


\begin{proof}
    When $u = 1$, the system becomes
    \begin{equation}\begin{cases}2x + 3 y \ =\ 2,\\ 1 + 2x + 3 y \ =\ 2.\end{cases}\end{equation}
    Then the equation $2x+3y = 2$ has the solution $(1,0)$. 

    For $u \ge 2$, we have $F_{2u}\ge 3$. Furthermore, $F_{4u-1}\equiv 1\mod F_{2u}$ by \eqref{le3}, and $F_{4u} \equiv 0\mod F_{2u}$ because $2u$ divides $4u$. Hence, the hypotheses of Proposition \ref{yoyo} are satisfied. Therefore, the system is not solvable, while $1 + 2x + 3y = 2$ has no solutions.
\end{proof}

\subsection{Admissibility of $k$}
We are interested in the question: for which $k\ge 1$, does there exist an $n\ge 1$ such that \eqref{ed40} or \eqref{ed41}
is solvable? 

First, we consider \eqref{ed40}. If $n=1$ or $n=2$, then $(F_n-1)(F_{n+1}-1) = 0$,
so $x=y=0$ gives a solution of \eqref{ed40}. Hence, every positive integer $k$ makes \eqref{ed40} solvable for some $n\in\mathbb{N}$. We can therefore be more ambitious by asking for infinitely many $n\ge 1$ with solution $x = y > 0$. Still, every integer $k$ satisfies this stronger condition, as stated in the next theorem.

\begin{thm}\label{thm:no-one-all-k}
For every positive integer $k$, there exist infinitely many
$n\ge 3$ for which \eqref{ed40} is solvable with
$x = y > 0$.

More precisely, if $n$ is even and $F_{n-1}\equiv1\mod{k}$,
then
\begin{equation}\label{eq:no-one-xy}
x\ =\ y\ =\ \frac{F_{n-1}-1}{k}
\end{equation}
is a solution of \eqref{ed40}.
\end{thm}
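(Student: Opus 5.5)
The "more precisely" part is exactly Proposition \ref{yo}, so the plan is to reduce the theorem to that proposition plus two facts: periodicity of Fibonacci numbers modulo $k$, and positivity of the resulting $x$.

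\textbf{Case $k=1$.} The condition $F_{n-1}\equiv 1 \pmod 1$ holds trivially. Take any even $n\ge 4$. Then $F_{n-1}\ge 2$, so $x=y=F_{n-1}-1>0$, and Proposition \ref{yo} gives the solution.

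\textbf{Case $k\ge 2$.} First I produce infinitely many even $n$ with $F_{n-1}\equiv 1\pmod k$.

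\begin{itemize}
\item Let $\pi=\pi(k)$ be the Pisano period modulo $k$, which exists as in the footnote to Proposition \ref{mo(n-2)n}. Since $(F_1,F_2)\equiv(1,1)$, we have $F_{1+j\pi}\equiv 1\pmod k$ for every $j\ge 0$.
\item Set $n=2+j\pi$ with $j$ even. Then $n$ is even, whatever the parity of $\pi$, and $F_{n-1}=F_{1+j\pi}\equiv 1\pmod k$.
\item This gives infinitely many even $n\ge 3$; for instance $j=2,4,6,\dots$ gives $n=2+2\pi,\,2+4\pi,\dots$. Alternatively, one could use that $\pi$ is even for $k\ge 3$, as the paper does, but choosing $j$ even avoids needing that fact.
\end{itemize}

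\textbf{Applying Proposition \ref{yo}.} For each such $n$, Proposition \ref{yo} shows that $k$ divides $(F_n-1)(F_{n+1}-1)$ and that $x=y=(F_{n-1}-1)/k$ is the unique nonnegative solution of \eqref{ed40}.

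\textbf{Positivity.} It remains to show $x>0$, i.e. $F_{n-1}\neq 1$.

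\begin{itemize}
\item For $n\ge 4$, the index $n-1\ge 3$, so $F_{n-1}\ge 2$.
\item Since $F_{n-1}\equiv 1\pmod k$ and $F_{n-1}>1$, we get $F_{n-1}-1\ge k$, hence $x\ge 1$.
\item Our choices satisfy $n\ge 2+2\pi\ge 4$, because $\pi\ge 1$.
\end{itemize}

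This completes the plan. No step is a real obstacle: the only points needing care are the parity of $n$ and excluding the trivial case $F_{n-1}=1$, and taking $j$ even with $j\ge 2$ handles both.
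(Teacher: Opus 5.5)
Your proposal is correct and takes the paper's route. The paper's proof just points to Proposition \ref{yo}, and you fill in what it leaves implicit: infinitely many even $n$ with $F_{n-1}\equiv 1 \mod k$, where taking $n=2+j\pi$ with $j$ even also covers an odd Pisano period such as $\pi(2)=3$, and positivity of $x$, which follows from $F_{n-1}\ge 2$ for $n\ge 4$. The step $F_{1+j\pi}\equiv F_1$ needs the Fibonacci sequence modulo $k$ to be purely periodic, not just eventually periodic as the footnote's argument gives; this holds because the recurrence also runs backwards via $F_{m-1}=F_{m+1}-F_m$.
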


\begin{proof}
See the proof of Proposition \ref{yo}. 
\end{proof}

\begin{defi}\normalfont
A positive integer $k$ is called \emph{admissible} if there exist
$n\ge 1$ and $x,y\in\mathbb Z_{\ge 0}$ satisfying
\eqref{ed41}. 
\end{defi}
The situation changes dramatically when we consider \eqref{ed41}. It turns out that not every $k$ is admissible. 
Subsequent work in this subsection is due to GPT-5.6 Sol \cite{GPT}. The authors have verified that the work is correct and have edited the exposition.

\begin{prop}\label{prop:k-one}
The integer $k=1$ is not admissible.
\end{prop}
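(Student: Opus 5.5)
The plan is to reduce the case $k=1$ to the classical fact that the Frobenius number $F=ab-a-b$ is not representable, and to obtain that fact from Theorem \ref{m_exactly1} rather than quoting it separately. With $k=1$ we have $M_1=(a-1)(b-1)=ab-a-b+1=F+1$. So equation \eqref{ed41} with $(a,b)=(F_n,F_{n+1})$ becomes
\begin{equation*}
F_nx+F_{n+1}y\ =\ F_nF_{n+1}-F_n-F_{n+1}.
\end{equation*}
Admissibility of $k=1$ therefore amounts to asking whether the Frobenius number of the pair $(F_n,F_{n+1})$ is representable with nonnegative coefficients for some $n$.

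First I dispose of the degenerate indices. For $n=1$ and $n=2$ we have $F_n=1$, so the right-hand side $(F_n-1)(F_{n+1}-1)$ equals $0$. Equation \eqref{ed41} then reads $1+F_nx+F_{n+1}y=0$. Its left-hand side is at least $1$ for nonnegative $x,y$, so there is no nonnegative solution.

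For $n\ge 3$ we have $a=F_n\ge 2$ and $b=F_{n+1}\ge 3$. Consecutive Fibonacci numbers are coprime, since $\gcd(F_n,F_{n+1})=\gcd(F_{n-1},F_n)=\cdots=\gcd(F_1,F_2)=1$. The hypotheses of Theorem \ref{m_exactly1} therefore hold, and I apply it with $m=0$. It says exactly one of $ax+by=0$ and $ax+by=F$ is solvable. The first is solvable by $(x,y)=(0,0)$, so $ax+by=F$ is not, and hence \eqref{ed41} has no nonnegative solution for this $n$.

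Combining the two cases, no $n\ge 1$ makes \eqref{ed41} solvable when $k=1$, so $k=1$ is not admissible. There is no real obstacle here. The only points needing care are recognising that $M_1-1=F$, and handling the indices $n\le 2$, where $a=1$ and Theorem \ref{m_exactly1} does not apply.
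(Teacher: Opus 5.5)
Your proof is correct, and it differs from the paper's only in how you handle $n\ge 3$; the treatment of $n=1,2$ is the same.

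\textbf{The paper's route.} It argues directly, in Sylvester's classical way. It rewrites the equation as $F_n(x+1)+F_{n+1}(y+1)=F_nF_{n+1}$. Coprimality then gives $F_{n+1}\mid x+1$ and $F_n\mid y+1$. Hence the left side is at least $2F_nF_{n+1}$, a contradiction.

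\textbf{Your route.} You observe that $M_1-1=F$ and invoke the complement principle of Theorem \ref{m_exactly1} with $m=0$. This is exactly Corollary \ref{k(k-1)cor} at $k=1$, where $(k-1)M_k=0$.

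\textbf{Comparison.} Your argument is shorter given the machinery of Section 2. It also makes clear that the non-admissibility of $k=1$ is just the trivial instance of the duality between $m$ and $F-m$. The paper's argument is self-contained and does not depend on Theorem \ref{m_exactly1}. That theorem's proof tacitly uses the fact that $ax+by=m$ is unsolvable whenever $\sigma(m)<0$, which is the content of Lemma \ref{chu}.

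Neither argument uses anything about Fibonacci numbers beyond $\gcd(F_n,F_{n+1})=1$ and $F_n\ge 2$ for $n\ge 3$. So both show that $1+ax+by=(a-1)(b-1)$ is unsolvable for every coprime pair $a,b\ge 2$.
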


\begin{proof}
For $n=1,2$, we have $(F_n-1)(F_{n+1}-1)=0$,
whereas $1+F_nx+F_{n+1}y\ge 1$. Hence, no solution exists in these cases.

Suppose now that $n\ge 3$ and that \eqref{ed41} holds with
$k=1$. Then 
\begin{equation}
1+F_nx+F_{n+1}y
\ =\
(F_n-1)(F_{n+1}-1),\end{equation}
and therefore,
\begin{equation}\label{ed3}
F_n(x+1)+F_{n+1}(y+1)
\ =\
F_nF_{n+1}.
\end{equation}
Since $\gcd(F_n,F_{n+1})=1$, reducing \eqref{ed3} modulo $F_n$ gives
$F_n \mid (y+1)$,
while reducing modulo $F_{n+1}$ gives
$F_{n+1} \mid (x+1)$.
Since $x,y\ge 0$, it follows that
$y+1 \ge F_n$ and 
$x+1 \ge F_{n+1}$.
Consequently,
\begin{equation} F_n(x+1)+F_{n+1}(y+1)\ \ge\ 
F_nF_{n+1}+F_{n+1}F_n,\end{equation}
contradicting \eqref{ed3}.
Thus, $k=1$ is not admissible.
\end{proof}

In view of Proposition~\ref{prop:k-one}, it remains to characterize
the admissible integers $k\ge 2$. Let
$\varphi= (1+\sqrt{5})/2$
be the golden ratio.

\begin{lem}\label{lem:floor-fibonacci}
Let $n\ge 2$ and let $d$ be an integer  satisfying
$1\le d<F_{n+1}$.
Then
\begin{equation}
\left\lfloor d\frac{F_n}{F_{n+1}}\right\rfloor
\ =\
\left\lfloor\frac{d}{\varphi}\right\rfloor.
\end{equation}
\end{lem}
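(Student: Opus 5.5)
The plan is to compare $dF_n/F_{n+1}$ with $d/\varphi$ and show that no integer lies between them or equals the larger one. Binet's formula gives $F_n/F_{n+1}\to 1/\varphi$, with error
\begin{equation}
\frac{F_n}{F_{n+1}}-\frac{1}{\varphi}\ =\ \frac{\varphi F_n - F_{n+1}}{\varphi F_{n+1}}\ =\ \frac{-\psi^{n}}{\varphi F_{n+1}},
\end{equation}
where $\psi=(1-\sqrt5)/2$. This uses the identity $F_{n+1}-\varphi F_n=\psi^n$. Hence
\begin{equation}
\left|\frac{dF_n}{F_{n+1}}-\frac{d}{\varphi}\right|\ =\ \frac{d\,\varphi^{-n}}{\varphi F_{n+1}}\ <\ \varphi^{-n-1},
\end{equation}
using $d<F_{n+1}$ and $|\psi|=\varphi^{-1}$.

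It then suffices to show that no integer $j$ lies in the closed interval between $dF_n/F_{n+1}$ and $d/\varphi$, except possibly $j=dF_n/F_{n+1}$ itself when that value is at most $d/\varphi$. Two distances need to be bounded from below.

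\begin{enumerate}
\item The distance from $d/\varphi$ to the nearest integer. Here I would use the best-approximation property of the convergents $F_{m}/F_{m+1}$ of $1/\varphi$. For $1\le d<F_{n+1}$ and any integer $j$, the standard (Lagrange) best approximation theorem gives $|d/\varphi - j|\ge |F_{n}/\varphi - F_{n-1}|=\varphi^{-n}$, since $F_{n+1}$ is the next convergent denominator. The identity $F_n\varphi^{-1}-F_{n-1}=(-1)^{n-1}\varphi^{-n}$ follows from Binet's formula.
\item The distance from $dF_n/F_{n+1}$ to the nearest integer when that value is not an integer. This is at least $1/F_{n+1}$, because the numerator $dF_n-jF_{n+1}$ is a nonzero integer.
\end{enumerate}

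Now suppose some integer $j$ lay weakly between the two numbers. Then $|d/\varphi-j|\le |dF_n/F_{n+1}-d/\varphi|<\varphi^{-n-1}<\varphi^{-n}$, which contradicts item (1). Since $\varphi$ is irrational, $d/\varphi$ is never an integer. The remaining case is $dF_n/F_{n+1}=j$ exactly. Because $\gcd(F_n,F_{n+1})=1$, this forces $F_{n+1}\mid d$, which is impossible for $1\le d<F_{n+1}$. Hence both floors equal the same integer.

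The main obstacle is justifying the bound $|d/\varphi-j|\ge\varphi^{-n}$ for all $d<F_{n+1}$, that is, the best-approximation property of continued fraction convergents of $[0;1,1,1,\dots]$. I would try to cite it as the classical theorem that if $q<q_{m+1}$ then $|q\alpha-p|\ge|q_m\alpha-p_m|$. If a self-contained argument is preferred, one can instead use the lattice/determinant argument: write $(d,j)$ as an integer combination of $(F_n,F_{n-1})$ and $(F_{n+1},F_n)$ (unimodular by Cassini), and observe that the coefficients must have opposite signs. Small cases such as $n=2$, with $F_{n+1}=2$ and $d=1$, can be checked directly.
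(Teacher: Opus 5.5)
Your proof is correct, but it takes a different route from the paper: you get the separation from the distance of the irrational number $d/\varphi$ to the integers, while the paper gets it from the rational number $dF_n/F_{n+1}$.

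Both arguments start from the same gap estimate $|dF_n/F_{n+1}-d/\varphi| = d\varphi^{-n-1}/F_{n+1}<\varphi^{-n-1}$. The paper then argues as follows. Because $\gcd(F_n,F_{n+1})=1$ and $1\le d<F_{n+1}$, the number $dF_n/F_{n+1}$ is a non-integer with denominator $F_{n+1}$, so it is at distance at least $1/F_{n+1}$ from every integer. Binet's formula gives $\varphi^{-n-1}<1/F_{n+1}$, so $d/\varphi$ lies in the same open unit interval.

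You instead bound $|d/\varphi-j|\ge\varphi^{-n}$ for every integer $j$, using the best-approximation property of the convergents $F_m/F_{m+1}$ of $1/\varphi$. Your unimodular-lattice sketch is the standard proof of that property and works for $n\ge 2$. One small refinement: the coefficients need not have strictly opposite signs. If the coefficient of $(F_{n+1},F_n)$ vanishes, then $d=uF_n$ and the bound holds directly. The other coefficient cannot vanish, since $d<F_{n+1}$. Since the gap is below $\varphi^{-n-1}<\varphi^{-n}$, no integer lies in the closed interval between the two numbers, and that alone finishes the proof. So your item (2) and the closing ``remaining case'' are never used.

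What your route gives is a stronger fact: $d/\varphi$ stays at least $\varphi^{-n}$ from $\mathbb{Z}$ for all $d<F_{n+1}$, and non-integrality of $dF_n/F_{n+1}$ follows automatically. The cost is importing the best-approximation theorem. The obstacle you flagged can in fact be avoided entirely. Your item (2), together with $d\varphi^{-n-1}/F_{n+1}<1/F_{n+1}$ (which holds because $d<F_{n+1}<\varphi^{n+1}$), closes the argument at once, and that is exactly the paper's proof.
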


\begin{proof}
We first claim that 
\begin{equation}\label{eq:fib-alpha}
\varphi^{-1}F_{n+1}-F_n
\ =\
(-1)^n\varphi^{-n-1}, \mbox{ for all } n\ge 1.
\end{equation}
We prove \eqref{eq:fib-alpha} by induction. For $n = 1$, the identity holds because $\varphi^2-\varphi-1 = 0$.
For each $n\ge 1$, we have
\begin{align}
\varphi^{-1}F_{n+2}-F_{n+1}&\ =\ \varphi^{-1}(F_{n+1}+F_n)-F_{n+1}\nonumber\\
&\ =\ (\varphi^{-1}-1)F_{n+1} + \varphi^{-1}F_n\nonumber\\
&\ =\ -\varphi^{-2} F_{n+1} + \varphi^{-1}F_n\nonumber\\
&\ =\ -\varphi^{-1}(\varphi^{-1}F_{n+1} - F_n).
\end{align}
Thus, \eqref{eq:fib-alpha} follows by induction.

By \eqref{eq:fib-alpha},
\begin{equation}\label{ed4}\left|d\frac{F_n}{F_{n+1}}-\frac{d}{\varphi}\right|
\ =\
\frac{d}{F_{n+1}}
\left|F_n-\frac{F_{n+1}}{\varphi}\right|\\
\ =\
\frac{d\varphi^{-n-1}}{F_{n+1}} \ <\ \varphi^{-n-1}\ <\ \frac{1}{F_{n+1}},\end{equation}
where the last inequality is due to Binet's formula:
$F_n=(\varphi^n-\varphi^{-n})/\sqrt{5}$ (see \cite[Theorem 5.6]{Koshy2001}).
Write 
\begin{equation}d\frac{F_n}{F_{n+1}}\ =\ q + \frac{r}{F_{n+1}},\end{equation}
for some integers $q\ge 0$ and $r\in\{1,2,\ldots,F_{n+1}-1\}$, which is possible because $dF_n/F_{n+1}$ is not an integer.
Consequently,
\begin{equation}
q+\frac1{F_{n+1}}
\ \le\ d\frac{F_n}{F_{n+1}}\ \le\ 
q+1-\frac1{F_{n+1}}.
\end{equation}
Combining this with \eqref{ed4} gives
\begin{equation}
q\ <\ \frac{d}{\varphi}\ <\ q+1.
\end{equation}
Hence,
\begin{equation}
\left\lfloor \frac{d}{\varphi}\right\rfloor\ =\ q\ =\ \left\lfloor d\frac{F_n}{F_{n+1}}\right\rfloor.
\end{equation}
\end{proof}

We are now ready to characterize all admissible positive integers $k\ge 2$.

\begin{thm}\label{thm:admissible-k}
Let $k\ge 2$, and define
\begin{equation}
r\ =\ r(k)\ :=\
\left\lfloor\frac{k-1}{\varphi}\right\rfloor.
\end{equation}
Then $k$ is admissible if and only if at least one of the following
scenarios occurs.

\begin{enumerate}
    \item There exists $n\ge 1$ such that
    \begin{equation}
    k\ =\ (F_n-1)(F_{n+1}-1).
    \end{equation}
    In this case, the unique solution is $(x,y) = (0,0)$.

    \item There exists an even integer $n\ge 2$ such that 
    \begin{align}
    &k\ <\ F_{n+1},\nonumber\\
    &F_n+rF_{n+1}
    \ \equiv\ -1 \mod k,\mbox{ and }
    \label{eq:even-cong-1}\\
    &rF_n+F_{n+1}
    \ \equiv\ 1 \mod k.
    \label{eq:even-cong-2}
    \end{align}
    In this case, the unique solution is 
    \begin{equation}\label{sol_1}(x,y) \ =\ \left(F_n - \frac{F_n + rF_{n+1}+1}{k}, \frac{rF_n + F_{n+1}-1}{k} - F_{n-1}\right).\end{equation}
    \item There exists an odd integer $n\ge 1$ such that
    \begin{align}
    &k\ <\ F_{n+1},\nonumber\\
        &F_n+(r+1)F_{n+1}
    \ \equiv\ 1 \mod k
    \label{eq:odd-cong-2}, \mbox{ and }\\
    &(r-1)F_n+F_{n+1}
    \ \equiv\ -1 \mod k.
    \label{eq:odd-cong-1}
    \end{align}
    In this case, the unique solution is 
    \begin{equation}\label{sol_2}(x,y) \ =\ \left(\frac{\left(r+1 \right) F_{n+1} +F_n -1}{k} - F_n, F_{n-1} - \frac{F_{n+1} + (r-1)F_n+1}{k} \right).\end{equation}
\end{enumerate}
\end{thm}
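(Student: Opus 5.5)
The plan is to fix $n$, write $a=F_n$, $b=F_{n+1}$, and transform \eqref{ed41} into a lattice-point condition that depends only on $k$ and the parity of $n$. The two degenerate cases come first.

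For $n=1,2$ we have $M_k=0$, so nothing is solvable. If $k=(a-1)(b-1)$, then $M_k=1$ and $(0,0)$ is the solution; this is scenario (1).

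Otherwise, I will multiply \eqref{ed41} by $k$ and rewrite it, exactly as in the proof of Proposition \ref{prop:k-one}, as
\begin{equation*}
a(kx+1)+b(ky+1)+(k-1)ab\ =\ ab+\ldots,
\end{equation*}
or more usefully
\begin{equation*}
a(kx+k-1)+b(ky+k-1)\ =\ (k-1)ab+\ldots .
\end{equation*}
The cleanest form is obtained by setting $X=kx+1$ and $Y=ky+1$, which gives
\begin{equation*}
aX+bY\ =\ ab-(k-1)(a+b)+\ldots .
\end{equation*}
I will do this bookkeeping carefully. The goal is a representation $aX+bY=$ (explicit quantity), with $X\equiv Y\equiv 1 \mod k$ and $X,Y\ge 1$.

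The key reduction uses the size constraint. From $1+ax+by=M_k<ab/k$ we get $x<b/k$ and $y<a/k$. The general integer solution of $aX+bY=c$ is a one-parameter family. I will parametrize it by the quotient $d=$ (some multiple of $X$ and $Y$ reduced mod $b$). Concretely, writing $ax+by=M_k-1$ with $x=\rho(M_k-1)$ (Lemma \ref{l50}), solvability is equivalent to $\sigma(M_k-1)\ge 0$, that is, $a\rho(M_k-1)\le M_k-1$.

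Multiplying by $k$, the congruence $k a\rho\equiv (a-1)(b-1)-k \mod b$ determines $k\rho$ modulo $b$. So $k\rho(M_k-1)=b\,t+c_0$, where $c_0$ is a fixed residue expressible through $\Theta=\Theta(F_n,F_{n+1})=\pm F_{n-1}$ or $F_{n+1}-F_{n-1}$ according to parity; this is where the even/odd split enters. The integer $t$ ranges over $[0,k-1]$, similar to Lemma \ref{lbound}.

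The inequality $\sigma\ge 0$ then becomes $at\cdot\frac{b}{k}+\ldots\le\frac{ab}{k}-\ldots$. After clearing denominators it becomes a condition of the form $t\le d\,F_n/F_{n+1}$ plus an integral correction, with $d=k-1$ or so. By Lemma \ref{lem:floor-fibonacci}, valid since $d<F_{n+1}$ (this is why $k<F_{n+1}$ appears), $\lfloor dF_n/F_{n+1}\rfloor=\lfloor d/\varphi\rfloor=r$. This pins down $t$: it is $r$ in the even case and $r+1$ (with the shifted sign) in the odd case. The congruences \eqref{eq:even-cong-1}--\eqref{eq:even-cong-2} and \eqref{eq:odd-cong-2}--\eqref{eq:odd-cong-1} express precisely that $k\mid kx$ and $k\mid ky$ for the resulting candidates, that is, integrality of $x$ and $y$.

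Substituting $t$ back produces the formulas \eqref{sol_1} and \eqref{sol_2}. Checking them against \eqref{ed41} is a direct Cassini computation: $F_{n-1}F_{n+1}-F_n^2=(-1)^n$, which I would verify by expanding. Uniqueness follows from Theorem \ref{c_atmost1}.

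The main obstacle is the middle step. I have to show that nonnegativity of both coordinates forces exactly one value of $t$, and to handle the borderline: $k\ge F_{n+1}$ must be shown to give no solution except scenario (1). For this I would use $x<b/k$, which forces $x=0$ when $k\ge b$, and then argue directly that $by=M_k-1$ with $M_k<a$ forces $M_k=1$. The remaining checks, that the floors land strictly inside the interval, rely on the strict inequality in Lemma \ref{lem:floor-fibonacci} ($dF_n/F_{n+1}$ is never an integer).
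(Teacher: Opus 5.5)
Your outline follows the paper's architecture: multiply by $k$, confine $kx+1$ to a window of length $F_{n+1}$, reduce modulo $F_{n+1}$ via Cassini, convert the floor with Lemma~\ref{lem:floor-fibonacci}, read the mod-$k$ congruences as integrality, and verify the converse directly. Your treatment of $n=1,2$ and of $k\ge F_{n+1}$ is correct. However, the step that carries all the content, determining $kx+1$ exactly, is never done, and what you sketch in its place does not work as written. The identity you defer as ``bookkeeping'' is
\[
F_n(kx+1)+F_{n+1}(ky+1)\ =\ F_nF_{n+1}-(k-1),
\]
not $ab-(k-1)(a+b)+\cdots$. More importantly, your mechanism for producing $r$ contradicts your own normalization. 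If $c_0\in[0,b-1]$ is the residue of $k\rho(M_k-1)$ and $t\ge 0$, then the bound $kx<b$ you already derived forces $t=0$, so $t$ is never $r$ or $r+1$. The floor does not come from an inequality on the quotient $t$ (and an upper bound alone would not pin anything down); it comes from computing the residue itself.

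Concretely, reducing the identity modulo $F_{n+1}$ and using $F_n^2\equiv(-1)^{n+1}\pmod{F_{n+1}}$ gives $kx+1\equiv(-1)^n(k-1)F_n\pmod{F_{n+1}}$. Since $0<kx+1<F_{n+1}$, $kx+1$ is the least positive residue. Because $1\le k-1<F_{n+1}$ makes $(k-1)F_n/F_{n+1}$ a non-integer with floor $r$, this gives $kx+1=(k-1)F_n-rF_{n+1}$ for even $n$ and $kx+1=(r+1)F_{n+1}-(k-1)F_n$ for odd $n$. Substituting back and using Cassini once more gives $ky+1=(r+1)F_n-(k-1)F_{n-1}$, respectively $(k-1)F_{n-1}-rF_n$. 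Reducing modulo $k$ yields exactly the stated congruences. So the forward direction needs only the window on $x$, not $\sigma(M_k-1)\ge 0$ or nonnegativity of both coordinates, which was the obstacle you were worried about.

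For the converse you still owe nonnegativity of the candidates in \eqref{sol_1} and \eqref{sol_2}. From $rF_{n+1}<(k-1)F_n<(r+1)F_{n+1}$ and Cassini, one coordinate is visibly nonnegative and the other exceeds $-1$; it is therefore $\ge 0$ because the congruences make it an integer. Your remark about the floors landing ``strictly inside the interval'' points at this but does not carry it out.
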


\begin{proof}
Assume that $k\ge 2$ is admissible; that is,
\begin{equation}\label{ed400}1 + F_n x + F_{n+1}y\ =\ \frac{(F_n-1)(F_{n+1}-1)}{k}\end{equation}
for some $x, y\in \mathbb{Z}_{\ge 0}$ and $n\ge 1$. 
Multiplying \eqref{ed400} by $k$ and rearranging give
\begin{equation}\label{ed5}
F_n(kx+1)+F_{n+1}(ky+1)\ =\ F_nF_{n+1}-(k-1).
\end{equation}
Then
\begin{equation}
0\ <\ kx+1\ <\ F_{n+1}\quad\mbox{ and }\quad 0\ <\ ky + 1\ <\ F_n.
\end{equation}

\bigskip

\noindent \textbf{Case 1:} $k\ge F_{n+1}$. Since
$0< kx + 1< F_{n+1}\le k$, we must have $x=0$. Similarly, $y = 0$.  Then \eqref{ed5} becomes
\begin{equation}
F_n+F_{n+1}\ =\ F_nF_{n+1}-(k-1).
\end{equation}
Thus,
\begin{equation}
k\ =\ F_nF_{n+1}-F_n-F_{n+1}+1\ =\ (F_n-1)(F_{n+1}-1),
\end{equation}
which is the first scenario.

\bigskip

\noindent \textbf{Case 2:} $k < F_{n+1}$. Since $1\le k-1 < F_{n+1}$, 
Lemma~\ref{lem:floor-fibonacci} yields
\begin{equation}
\left\lfloor (k-1)\frac{F_n}{F_{n+1}}\right\rfloor
\ =\
\left\lfloor\frac{k-1}{\varphi}\right\rfloor.
\end{equation}

\bigskip 
Case 2.1: $n$ is even. Reducing \eqref{ed5} modulo $F_{n+1}$
gives
\begin{equation}F_n(kx+1)\ \equiv\ -(k-1) \mod F_{n+1}.\end{equation}
By Cassini's identity, $F_n^2 \equiv -1\mod F_{n+1}$,
and therefore,
\begin{equation}
kx+1\ \equiv\ (k-1)F_n\mod F_{n+1}.
\end{equation}
Since
$0<kx+1<F_{n+1}$, we use Lemma \ref{lem:floor-fibonacci} to obtain
\begin{equation}\label{ed6}
kx+1\ =\ (k-1)F_n-\left\lfloor(k-1)\frac{F_n}{F_{n+1}}\right\rfloor F_{n+1}\ =\ (k-1)F_n - r F_{n+1}.
\end{equation}
Substituting this into \eqref{ed5} gives
\begin{equation}(k-1)F^2_n-r F_n F_{n+1}+F_{n+1}(ky+1)\ =\ F_nF_{n+1}-(k-1).\end{equation}
Rearranging and substituting $F^2_n + 1 = F_{n-1}F_{n+1}$ gives
\begin{equation}F_{n+1}(ky+1)\ =\ F_nF_{n+1} + r F_n F_{n+1} - (k-1)F_{n-1}F_{n+1}.\end{equation}
Hence,  
\begin{equation}\label{ed7}
ky+1\ =\ \left(r+1\right)F_n-(k-1)F_{n-1}.
\end{equation}
Reducing \eqref{ed6} and \eqref{ed7} modulo $k$ yields
\begin{equation}F_n + r F_{n+1}\ \equiv\ -1\mod k\quad\mbox{ and }\quad rF_n + F_{n+1}\ \equiv\ 1  \mod k.\end{equation}

\bigskip

Case 2.2: $n$ is odd. Reducing \eqref{ed5} modulo $F_{n+1}$ gives
\begin{equation}
F_n(kx+1)\ \equiv\ -(k-1)\mod F_{n+1}.
\end{equation}
By Cassini's identity, $F_n^2 \equiv 1\mod F_{n+1}$.
Therefore,
\begin{equation}kx+1\ \equiv\ -(k-1)F_n\mod F_{n+1},\end{equation}
which, along with $0< kx+1 < F_{n+1}$ and Lemma \ref{lem:floor-fibonacci}, yields
\begin{equation}\label{ed402}kx+ 1 \ =\ \left(\left\lfloor(k-1)\frac{F_n}{F_{n+1}}\right\rfloor+1 \right) F_{n+1} - (k-1)F_n\ =\ \left(r+1 \right) F_{n+1} - (k-1)F_n.\end{equation}
Substituting this into \eqref{ed5}, we obtain
\begin{equation}\left(r+1 \right) F_nF_{n+1} - (k-1)F^2_n+F_{n+1}(ky+1)\ =\ F_nF_{n+1}-(k-1).\end{equation}
Rearranging and using $F_n^2 - 1 = F_{n-1}F_{n+1}$ give
\begin{equation}F_{n+1}(ky+1)\ =\ F_nF_{n+1} + F_{n-1}F_{n+1}(k-1) - \left(r+1 \right) F_nF_{n+1},\end{equation}
so
\begin{equation}\label{ed403}ky + 1\ =\ F_{n-1}(k-1) - rF_n.\end{equation}
We reduce \eqref{ed402} and \eqref{ed403} modulo $k$ to get
\begin{equation}(r+1)F_{n+1}+F_n\ \equiv\  1\mod k\quad \mbox{ and }\quad rF_n+F_{n-1}\ \equiv\ -1\mod k;\end{equation}
that is,
\begin{equation}(r+1)F_{n+1}+F_n\ \equiv\  1\mod k\quad \mbox{ and }\quad (r-1)F_n+F_{n+1}\ \equiv\ -1\mod k.\end{equation}

Conversely, if the first scenario occurs, then 
\begin{equation}\frac{(F_n-1)(F_{n+1}-1)}{k}\ =\ \frac{(F_n-1)(F_{n+1}-1)}{(F_n-1)(F_{n+1}-1)}\ =\ 1,\end{equation}
so the equation $1 + F_n x + F_{n+1} y  = 1$
has the solution $(x,y) = (0,0)$. 

If the second scenario occurs, define the nonnegative integers 
\begin{equation}u\ :=\ F_n - \frac{F_n + rF_{n+1}+1}{k}\quad \mbox{ and }\quad v\ :=\ \frac{rF_n + F_{n+1}-1}{k} - F_{n-1}.\end{equation}
We have
\begin{align}
    &1 + F_n u + F_{n+1} v\nonumber\\
    \ =\ &1 + F_n \left(F_n -\frac{F_n + rF_{n+1}+1}{k}\right) + F_{n+1} \left(\frac{rF_n + F_{n+1}-1}{k}-F_{n-1}\right)\nonumber\\
    \ = \ &\frac{-F_n^2 - F_n + F^2_{n+1} - F_{n+1}}{k}\nonumber\\
    \ =\ &\frac{F_n F_{n+1}-F_n - F_{n+1} + 1}{k}\ =\ \frac{(F_n-1)(F_{n+1}-1)}{k}.
\end{align}

Furthermore,
\begin{align}
F_n + rF_{n+1} + 1&\ =\ F_n + \left\lfloor (k-1)\frac{F_n}{F_{n+1}}\right\rfloor F_{n+1} + 1\nonumber\\
&\ \le\ F_n + (k-1)F_n \ =\ kF_n,
\end{align}
so $u$ is nonnegative. Similarly,
\begin{align}
\frac{rF_n + F_{n+1}-1}{k} - F_{n-1}&\ =\ \frac{ \left\lfloor (k-1)\frac{F_n}{F_{n+1}}\right\rfloor F_{n+1}F_n + F_{n+1}^2 - F_{n+1} - kF_{n-1}F_{n+1}}{F_{n+1} k}\nonumber\\
&\ \ge\ \frac{(k-1)F_n^2 - F_nF_{n+1} + F_{n+1}^2 - F_{n+1} - kF_{n-1}F_{n+1}}{F_{n+1} k}\nonumber\\
&\ =\ \frac{k(F_n^2-F_{n-1}F_{n+1}) - (F_n^2 - F_{n-1}F_{n+1})  - F_{n+1}}{F_{n+1} k}\nonumber\\
&\ =\ \frac{-k + 1 - F_{n+1}}{F_{n+1} k}\nonumber\\
&\ =\ -\frac{1}{F_{n+1}} - \frac{1}{k} + \frac{1}{F_{n+1}k}\nonumber\\
&\ >\ -\frac{1}{F_{3}} - \frac{1}{2}\ =\ -1,
\end{align}
so $v$ is also nonnegative. 

The proof for the third scenario is similar, so we omit it. 
\end{proof}

\begin{cor}\label{ce1}
The integer $k=4$ is not admissible.
\end{cor}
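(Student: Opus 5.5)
The plan is to apply Theorem~\ref{thm:admissible-k} with $k=4$ and show that none of its three scenarios can occur. First we compute $r=r(4)=\lfloor 3/\varphi\rfloor$. Since $\varphi\approx 1.618$, we have $3/\varphi\approx 1.854$, so $r=1$. The three scenarios can then be ruled out separately, each by a short finite check.

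\textbf{Scenario (1).} The quantity $(F_n-1)(F_{n+1}-1)$ is nondecreasing in $n$, because $F_n\ge 1$ and the Fibonacci sequence is nondecreasing. Its values are $0$ for $n=1,2$, then $1\cdot 2=2$ for $n=3$, and $2\cdot 4=8$ for $n=4$. From $n=4$ on it is at least $8$. So it never equals $4$.

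\textbf{Scenario (2).} With $r=1$, the congruences \eqref{eq:even-cong-1} and \eqref{eq:even-cong-2} read
\begin{equation*}
F_n+F_{n+1}\ \equiv\ -1 \mod 4 \quad\mbox{ and }\quad F_n+F_{n+1}\ \equiv\ 1\mod 4 .
\end{equation*}
Together they force $-1\equiv 1\mod 4$, which is false. So this scenario is impossible for every even $n$, and we do not even need the condition $k<F_{n+1}$.

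\textbf{Scenario (3).} With $r=1$, congruence \eqref{eq:odd-cong-1} becomes $F_{n+1}\equiv -1\equiv 3\mod 4$. Substituting this into \eqref{eq:odd-cong-2} gives
\begin{equation*}
F_n\ \equiv\ 1-2F_{n+1}\ \equiv\ 1-6\ \equiv\ 3\mod 4 .
\end{equation*}
So we would need two consecutive Fibonacci numbers that are both $\equiv 3\mod 4$. The Fibonacci sequence modulo $4$ is periodic with period $6$:
\begin{equation*}
0,1,1,2,3,1,\ 0,1,1,2,3,1,\ \dots
\end{equation*}
In one period the only residue equal to $3$ is followed by $1$. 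Hence no pair $(F_n,F_{n+1})\equiv(3,3)\mod 4$ exists, and this scenario is impossible as well.

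Since none of the three scenarios of Theorem~\ref{thm:admissible-k} occurs, $k=4$ is not admissible. The only step needing any care is the periodicity check modulo $4$, and it is a direct finite computation.
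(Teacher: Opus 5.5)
Your proposal is correct and follows the paper's proof essentially step for step. You compute $r=1$ and rule out scenario (1) directly. In scenario (2) the two congruences contradict each other. In scenario (3) you reduce to two consecutive Fibonacci numbers both congruent to $3$ modulo $4$, which the period-$6$ residue pattern excludes. Your one addition is the monotonicity argument for scenario (1), a detail the paper simply asserts.
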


\begin{proof}
Suppose that \(k=4\). Then
$$
r\ =\ \left\lfloor \frac{k-1}{\varphi}\right\rfloor
\ =\ \left\lfloor \frac{3}{\varphi}\right\rfloor\ =\ 1.
$$
We show that none of the three scenarios in
Theorem~\ref{thm:admissible-k} can hold.

First, there is no $n\ge 1$ such that
$4=(F_n-1)(F_{n+1}-1)$.

Next, suppose that \(n\) is even. Since \(r=1\), we have
\eqref{eq:even-cong-1} and \eqref{eq:even-cong-2} become
\[
F_n+F_{n+1}\ \equiv\ -1\mod 4\quad \mbox{ and }\quad
F_n+F_{n+1}\ \equiv\ 1\mod 4,
\]
which cannot happen simultaneously.

Finally, suppose that \(n\) is odd. Then
\eqref{eq:odd-cong-2} and \eqref{eq:odd-cong-1} become
$$F_n+2F_{n+1}\ \equiv\ 1\mod4\quad\mbox{ and }\quad
F_{n+1}\ \equiv\ -1\mod4.$$
Hence,
$$
F_{n}\ \equiv\ 3\mod4
\quad\text{ and }\quad
F_{n+1}\ \equiv\ 3\mod4.
$$
However, the Fibonacci sequence modulo \(4\) is periodic with period
\(6\):
\[
0,1,1,2,3,1,0,1,1,2,3,1,\ldots,
\]
so two consecutive Fibonacci numbers are never both congruent to
$3\mod4$. 

Thus none of the scenarios in Theorem~\ref{thm:admissible-k} holds. Therefore, $4$ is not admissible.
\end{proof}

The characterization in Theorem~\ref{thm:admissible-k} yields a useful necessary condition
depending only on $k$.

\begin{cor}\label{cor:necessary-k}
Let $k\ge 2$ be admissible, and put
\begin{equation}
r\ =\ \left\lfloor\frac{k-1}{\varphi}\right\rfloor.
\end{equation}
If $1 + F_n x+ F_{n+1} y = (F_n-1)(F_{n+1}-1)/k$ is solvable with a solution $(x,y)\neq (0,0)$, then
\begin{equation}\label{eq:necessary-general}
k\mid r(r-2)(r+1)^2.
\end{equation}
\end{cor}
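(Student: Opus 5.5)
The plan is to reduce to the congruence conditions of Theorem~\ref{thm:admissible-k}. From them we get, modulo $k$, explicit multiples of $F_n$ and $F_{n+1}$. Substituting these into Cassini's identity leaves a congruence involving only $r$.

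\textbf{Step 1 (excluding the first scenario).} Suppose \eqref{ed41} has a solution $(x,y)\neq(0,0)$. In the proof of Theorem~\ref{thm:admissible-k}, Case~1 ($k\ge F_{n+1}$) forces $x=y=0$. Hence we are in Case~2, i.e. $k<F_{n+1}$. The proof there shows that the congruences \eqref{eq:even-cong-1}--\eqref{eq:even-cong-2} hold if $n$ is even, and \eqref{eq:odd-cong-2}--\eqref{eq:odd-cong-1} hold if $n$ is odd.

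\textbf{Step 2 (even $n$).} Eliminate variables in the linear system $F_n+rF_{n+1}\equiv -1$, $rF_n+F_{n+1}\equiv 1 \pmod k$, without dividing. Taking the first congruence minus $r$ times the second, and $r$ times the first minus the second, gives
\[
Y:=(r^2-1)F_n\equiv r+1,\qquad X:=(r^2-1)F_{n+1}\equiv -(r+1)\pmod k .
\]
For even $n$, Cassini's identity with $F_{n-1}=F_{n+1}-F_n$ reads $F_{n+1}^2-F_nF_{n+1}-F_n^2=1$. Multiplying by $(r^2-1)^2$ gives $X^2-XY-Y^2=(r^2-1)^2$. Substituting the residues, the left side is $(r+1)^2$. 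Hence
\[
k\mid (r+1)^2\bigl((r-1)^2-1\bigr)=r(r-2)(r+1)^2 .
\]

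\textbf{Step 3 (odd $n$).} Apply the same elimination to $F_n+(r+1)F_{n+1}\equiv 1$ and $(r-1)F_n+F_{n+1}\equiv -1 \pmod k$. This gives
\[
Y:=(2-r^2)F_n\equiv r+2,\qquad X:=(2-r^2)F_{n+1}\equiv -r\pmod k .
\]
For odd $n$, Cassini gives $F_{n+1}^2-F_nF_{n+1}-F_n^2=-1$, so $X^2-XY-Y^2=-(2-r^2)^2$. The left side is congruent to $r^2-2r-4$. Therefore
\[
k\mid r^4-4r^2+4+r^2-2r-4=r^4-3r^2-2r=r(r-2)(r+1)^2,
\]
where we use $(r+1)^2(r-2)=r^3-3r-2$.

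\textbf{Main difficulty.} The obstacle is that the determinants $1-r^2$ and $2-r^2$ of these linear systems need not be invertible modulo $k$. That is why the plan never inverts them and instead multiplies Cassini's identity by the square of the determinant. The only real work is checking the two polynomial identities, which are routine.
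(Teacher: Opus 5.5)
Your proof is correct. Step 1 spells out what the paper leaves implicit: a nonzero solution at a given $n$ forces $k<F_{n+1}$, so the scenario-2 or scenario-3 congruences hold for that same $n$.

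\textbf{Odd $n$.} Your Step 3 is exactly the paper's argument. Eliminate, without dividing, to get $(r^2-2)F_n\equiv-(r+2)$ and $(r^2-2)F_{n+1}\equiv r \mod k$. Then substitute into Cassini's identity multiplied by $(r^2-2)^2$.

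\textbf{Even $n$.} Here the paper takes a different route. It writes down an explicit polynomial combination of the two congruences and Cassini's identity showing $F_n-F_{n+1}\equiv r^2-2 \mod k$. The difference of the two congruences gives $(r-1)(F_n-F_{n+1})\equiv 2 \mod k$. Combining these yields $(r-1)(r^2-2)\equiv 2$, i.e.\ $k\mid r(r-2)(r+1)$. This is stronger than the stated divisibility, without the square.

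\textbf{Comparison.} Your treatment is uniform in the parity of $n$: eliminate without inverting the determinant ($1-r^2$ or $2-r^2$), then multiply Cassini by its square. This is more systematic than the paper's ad hoc identity. The cost is the extra factor $r+1$ in the even case, which the statement allows anyway.
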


\begin{proof}
Suppose that $1 + F_n x+ F_{n+1} y = (F_n-1)(F_{n+1}-1)/k$ is solvable with a solution $(x,y)\neq (0,0)$.  Hence, either the second or the third scenario in Theorem \ref{thm:admissible-k} occurs.

\bigskip

\noindent \textbf{Case 1:} The second scenario in Theorem \ref{thm:admissible-k} occurs. By
\eqref{eq:even-cong-1} and \eqref{eq:even-cong-2},
\begin{equation}\label{ed406}
F_n+rF_{n+1}\ \equiv\ -1\mod k
\quad \mbox{ and }\quad 
rF_n+F_{n+1}\ \equiv\ 1\mod k.
\end{equation}
For even $n$, Cassini's identity can be written as
\begin{equation}\label{ed407}
F_{n+1}^2-F_nF_{n+1}-F_n^2\ =\ 1.
\end{equation}
By \eqref{ed406} and \eqref{ed407}, we have
\begin{align}
    F_n-F_{n+1}-r^2 + 2\ =\ 
&(1-F_n+rF_n-rF_{n+1})(F_n+rF_{n+1}+1)\nonumber\\
&+(-F_n+F_{n+1}+rF_n)(rF_n+F_{n+1}-1)\nonumber\\
&+(r^2-1)(F_{n+1}^2-F_nF_{n+1}-F_n^2-1)\ \equiv\ 0\mod k,
\end{align}
so 
\begin{equation}\label{ed410}
    F_n - F_{n+1} \ \equiv\ r^2 - 2\mod k.
\end{equation}
Meanwhile, \eqref{ed406} gives
\begin{equation}\label{ed411}
    (r-1)(F_n-F_{n+1})\ \equiv\ 2\mod k.
\end{equation}
Due to \eqref{ed410} and \eqref{ed411}, 
\begin{equation}(r-1)(r^2-2)\ \equiv\ 2\mod k,\end{equation}
which implies \eqref{eq:necessary-general}.

\bigskip

\noindent \textbf{Case 2:} The third scenario in Theorem \ref{thm:admissible-k} occurs. Then
\begin{equation}\label{ed413}
(r-1)F_n+F_{n+1}\ \equiv\ -1\mod k
\end{equation}
and
\begin{equation}\label{ed414}
F_n+(r+1)F_{n+1}\ \equiv\ 1\mod k.
\end{equation}
Multiplying both sides of \eqref{ed413} and \eqref{ed414} by $r+1$ and $r-1$, respectively, we obtain
\begin{equation}\label{ed415}
 (r^2-1)F_n + (r+1)F_{n+1}\ \equiv\ -(r+1)\mod k   
\end{equation}
and
\begin{equation}\label{ed416}
 (r-1)F_n + (r^2-1)F_{n+1}\ \equiv\ r-1\mod k.  
\end{equation}
Subtracting \eqref{ed414} and \eqref{ed415} side by side gives
\begin{equation}\label{ed418}
    (r^2-2)F_n\ \equiv\ -(r + 2)\mod k.
\end{equation}
Subtracting \eqref{ed413} and \eqref{ed416} side by side gives
\begin{equation}\label{ed419}
    (r^2-2)F_{n+1}\ \equiv\ r\mod k.
\end{equation}
By Cassini's identity,
\begin{equation}
F_{n+1}^2-F_nF_{n+1}-F_n^2\ =\ -1,
\end{equation}
so 
\begin{equation}
    ((r^2-2) F_{n+1})^2 - ((r^2-2)F_n)((r^2-2)F_{n+1}) - ((r^2-2)F_n)^2\ =\ -(r^2-2)^2.
\end{equation}
Reducing modulo $k$ and using \eqref{ed418} and \eqref{ed419}, we get
\begin{equation}
    r^2 + r(r+2) - (r+2)^2\ \equiv\ -(r^2-2)^2\mod k. 
\end{equation}
Hence, $k$ must divide $r(r-2)(r+1)^2$.
\end{proof}

\begin{rek}\normalfont
For each fixed $k$, the Fibonacci sequence modulo $k$ is periodic.
Consequently, admissibility of any fixed positive integer $k$ can
be decided by a finite computation.
\end{rek}

\begin{rek}\label{re1}\normalfont
The necessary condition in Corollary~\ref{cor:necessary-k} is not sufficient.
Indeed, for \(k=4\),
\begin{equation}
r\ =\ \left\lfloor \frac{3}{\varphi}\right\rfloor\ =\ 1,
\end{equation}
and hence,
\begin{equation}
r(r-2)(r+1)^2\ =\ -4,
\end{equation}
so
\begin{equation}
4\mid r(r-2)(r+1)^2.
\end{equation}
Thus \(k=4\) satisfies the necessary condition. However, by Corollary \ref{ce1}, $k=4$ is not admissible. Therefore,
\begin{equation}k\mid r(r-2)(r+1)^2\end{equation}
is not sufficient for admissibility.
\end{rek}

\begin{cor}
    The integer $k = 3$ is not admissible.
\end{cor}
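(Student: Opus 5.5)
We will show that none of the three scenarios of Theorem \ref{thm:admissible-k} can occur for $k=3$. First, $r=\lfloor 2/\varphi\rfloor = 1$, since $1<2/\varphi\approx 1.236<2$.

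\textbf{Scenario (1).} We check that $3$ is never of the form $(F_n-1)(F_{n+1}-1)$. For $n=1,2$ the product is $0$. For $n=3$ it is $1\cdot 2=2$, and for $n=4$ it is $2\cdot 4=8$. For $n\ge 3$ the product is strictly increasing in $n$, so it skips $3$.

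\textbf{Scenario (2).} Here $n$ is even. With $r=1$, congruences \eqref{eq:even-cong-1} and \eqref{eq:even-cong-2} become
\[
F_n+F_{n+1}\ \equiv\ -1 \mod 3 \quad\text{and}\quad F_n+F_{n+1}\ \equiv\ 1 \mod 3.
\]
These are incompatible, because $-1\not\equiv 1 \mod 3$. This is exactly the argument of Corollary \ref{ce1}.

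\textbf{Scenario (3).} Here $n$ is odd. With $r=1$, congruences \eqref{eq:odd-cong-2} and \eqref{eq:odd-cong-1} become
\[
F_n+2F_{n+1}\ \equiv\ 1 \mod 3 \quad\text{and}\quad F_{n+1}\ \equiv\ -1\ \equiv\ 2 \mod 3.
\]
Substituting the second into the first gives $F_n\equiv 1-4\equiv 0 \mod 3$. So we need $(F_n,F_{n+1})\equiv(0,2) \mod 3$ with $n$ odd. The Fibonacci sequence modulo $3$ has Pisano period $8$:
\[
F_0,F_1,\dots,F_7\ \equiv\ 0,1,1,2,0,2,2,1.
\]
The pair $(0,2)$ occurs only at index $n\equiv 4 \mod 8$, which is even. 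Hence no odd $n$ works.

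As a consistency check, Corollary \ref{cor:necessary-k} gives $r(r-2)(r+1)^2=-4$, and $3\nmid -4$. So the two nonzero-solution scenarios are already excluded, which agrees with the case analysis above.

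The argument requires no real work beyond this finite check. The only points needing care are the evaluation of $r$, the correct listing of the period modulo $3$, and matching the parity of $n$ with the scenario. Since all three scenarios fail, Theorem \ref{thm:admissible-k} shows that $k=3$ is not admissible.
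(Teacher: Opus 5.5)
Your proof is correct, but it handles scenarios (2) and (3) differently from the paper. The paper never examines the congruences of Theorem~\ref{thm:admissible-k} for $k=3$. It invokes Corollary~\ref{cor:necessary-k}: with $r=1$ one gets $r(r-2)(r+1)^2=-4$, and $3\nmid -4$, so there is no solution $(x,y)\neq(0,0)$. It then rules out $(0,0)$ because $3$ is not of the form $(F_n-1)(F_{n+1}-1)$. What you call a ``consistency check'' is therefore the paper's entire argument for the nonzero case, and you could have stopped there.

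Your main route mirrors the proof of Corollary~\ref{ce1}. It uses only the forward direction of Theorem~\ref{thm:admissible-k}, not the algebraic elimination behind Corollary~\ref{cor:necessary-k}. The cost is a small computation with the Fibonacci sequence modulo $3$, which can be shortened in two ways:
\begin{itemize}
\item $F_n\equiv 0 \pmod 3$ already forces $4\mid n$, since $3=F_4$.
\item Equivalently, $(F_n,F_{n+1})\equiv(0,2)$ gives $F_{n+1}^2-F_nF_{n+1}-F_n^2\equiv 1 \pmod 3$, which contradicts Cassini's identity for odd $n$. This is exactly the mechanism inside Corollary~\ref{cor:necessary-k}.
\end{itemize}
Your treatment of scenario (1), with the explicit monotonicity argument, is more complete than the paper's one-line assertion.
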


\begin{proof}
    Applying Corollary \ref{cor:necessary-k} with $k = 3$, we have $r = 1$, so 
    $k$ does not divide $r(r-2)(r+1)^2$. Hence, \eqref{ed41} is not solvable with a solution $(x,y) \neq (0,0)$. 
However, $(x,y) = (0,0)$ cannot be a solution because there is no $n\ge 1$ such that
    \begin{equation}(F_n-1)(F_{n+1}-1)\ =\ 3.\end{equation}
\end{proof}

\begin{proof}[Proof of Proposition \ref{prop:k-five}]
For $k=5$, we have $r = 2$ in Theorem \ref{thm:admissible-k}.
The first scenario in Theorem \ref{thm:admissible-k} cannot happen when $k = 5$.
We apply the characterization of admissibility according
to the parity of $n$.

\bigskip

\noindent \textbf{Case 1:} $n$ is odd. The corresponding congruences are
\begin{equation}
F_n+F_{n+1}\ \equiv\ 4\mod5\label{eq:k5-odd-1}
\end{equation}
and 
\begin{equation}
F_n+3F_{n+1}\ \equiv\ 1\mod5. \label{eq:k5-odd-2}
\end{equation}
Subtracting \eqref{eq:k5-odd-1} from
\eqref{eq:k5-odd-2} gives $F_{n+1} \equiv 1\mod5$.
Substituting into \eqref{eq:k5-odd-1}, we obtain $F_n\ \equiv\ 3\mod5$.
Thus, $(F_n,F_{n+1})\ \equiv\ (3,1)\mod5$.

The Fibonacci sequence modulo $5$ has Pisano period $20$, and
\begin{equation}
(F_7,F_8)\ =\ (13,21)\ \equiv\ (3,1)\mod5.
\end{equation}
Therefore $(F_n,F_{n+1})\equiv(3,1)\mod5$
if and only if
$n\equiv7\mod{20}$. By \eqref{sol_2}, the solution is 
\begin{equation}\left(\frac{3F_{n+1}-4F_n-1}{5}, \frac{4F_{n-1}-2F_n-1}{5}\right).\end{equation}

\bigskip

\noindent \textbf{Case 2:} $n$ is even. The corresponding congruences are
\begin{equation}
F_n+2F_{n+1}\ \equiv\ 4\mod5 \label{eq:k5-even-1}
\end{equation}
and
\begin{equation}
2F_n+F_{n+1}\ \equiv\ 1\mod5. \label{eq:k5-even-2}
\end{equation}
Multiplying \eqref{eq:k5-even-1} by $2$ and subtracting
\eqref{eq:k5-even-2}, we obtain $3F_{n+1} \equiv 2\mod5$, so 
$F_{n+1}\equiv4\mod5$.
Substituting into \eqref{eq:k5-even-1} gives
$F_n\equiv1\mod5$.
Thus, $(F_n,F_{n+1})\equiv(1,4)\mod5$.

Since
\begin{equation}
(F_8,F_9)\ =\ (21,34)\ \equiv\ (1,4)\mod5,
\end{equation}
and the Pisano period modulo $5$ is $20$, we have $(F_n, F_{n+1})\equiv (1,4)\mod 5$ if and only if
$n\equiv8\mod{20}$. By \eqref{sol_1}, the solution is 
\begin{equation}
    \left(\frac{4F_n - 2F_{n+1}-1}{5}, \frac{3F_n-4F_{n-1}-1}{5}\right).
\end{equation}

\end{proof}


\section{Periodicity and calculation for $\Gamma$}

\subsection{Periodicity of $(\Gamma(a, b^i))_{i=1}^\infty$}

Let
$\mbox{ord}_{a}(b)$ be the multiplicative order of $b$ modulo $a$. 

\begin{thm}\label{pow-per}
    Let $a,b\ge 2$ with $\gcd(a,b)=1$ and odd $a$. Then $(\Gamma(a,b^i))_{i= 1}^\infty$ is periodic with the period dividing $\mbox{ord}_a(b)$.
\end{thm}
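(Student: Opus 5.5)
We reduce everything to Theorem \ref{master_key}(1), which applies because $a$ is odd. For each $i\ge 1$ we have $\gcd(a,b^i)=1$ and $b^i\ge 2$, so part (1) gives
\[
\Gamma(a,b^i)=0 \quad\text{if and only if}\quad \Theta(b^i,a)\text{ is odd.}
\]
Here $\Theta(b^i,a)$ is the unique inverse of $b^i$ modulo $a$ lying in $\{1,\dots,a-1\}$. Thus $\Gamma(a,b^i)$ depends only on the residue of $b^i$ modulo $a$.

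The plan is as follows.

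\begin{enumerate}
\item Set $d=\mbox{ord}_a(b)$, which exists since $\gcd(a,b)=1$.
\item For any $i\ge1$ we have $b^{i+d}=b^i b^d\equiv b^i \pmod a$.
\item Inverses modulo $a$ are unique, so $\Theta(b^{i+d},a)$ and $\Theta(b^i,a)$ are congruent modulo $a$.
\item Both lie in $\{1,\dots,a-1\}$, so they are equal, and in particular they have the same parity.
\item By the displayed equivalence, $\Gamma(a,b^{i+d})=\Gamma(a,b^i)$ for all $i$.
\end{enumerate}

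Hence the sequence is periodic with $d$ as a period. The minimal period divides any period, because the set of periods of a purely periodic sequence is closed under taking gcd's (standard). So the minimal period divides $\mbox{ord}_a(b)$.

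There is no real obstacle here. The only points to check are that Theorem \ref{master_key} is applied with the roles set correctly: the first argument $a$ is odd, and $\Theta'=\Theta(b^i,a)$ is the inverse of the second argument modulo the first. We should also note that $\Theta(b^i,a)$ is well defined even when $a$ is small. If $a=1$ is excluded (the hypothesis is $a\ge 2$), the range $1\le\Theta'\le a-1$ is nonempty.
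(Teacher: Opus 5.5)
Your proof is correct and follows the paper's argument essentially verbatim. Like the paper, you use Theorem~\ref{master_key}(1) to reduce $\Gamma(a,b^i)$ to the parity of $\Theta(b^i,a)$, then use $b^{i+d}\equiv b^i \pmod a$ with $d=\mbox{ord}_a(b)$ to conclude that this inverse, and hence $\Gamma$, repeats with period $d$. Your remark that the minimal period divides every period, because periods are closed under differences and hence under gcds, makes explicit a step the paper leaves implicit.
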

 
\begin{proof}
Since $\gcd(a,b) = 1$ and $a,b\ge 2$, neither $a|b^i$ nor $b^i|a$. By Theorem \ref{master_key},
$\Gamma(a, b^i)  = 0$ if and only if $\Theta(b^i, a)$ is odd. 

Let $i\equiv j\mod \mbox{ord}_a(b)$. By definition, $b^{\mbox{ord}_a(b)}\equiv 1 \mod a$, so $b^i\equiv b^{j}\mod a$. Hence, 
    $\Theta(b^i,a)  = \Theta(b^{j},a)$ and thus, $\Gamma(a,b^i)=\Gamma(a,b^j)$.
\end{proof}

\begin{cor}
    Given an odd positive integer $n$ with $n\not\equiv 2\mod 3$, the sequence $(\Gamma(F_{n+1}, F_{n}^i))_{i=1}^\infty$ is either constant or has period $2$. 
\end{cor}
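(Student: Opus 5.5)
We apply Theorem \ref{pow-per} with $a = F_{n+1}$ and $b = F_n$. Two hypotheses need checking: $F_{n+1}$ must be odd, and $\gcd(F_n,F_{n+1}) = 1$ with $F_n, F_{n+1} \ge 2$. The gcd condition is the standard coprimality of consecutive Fibonacci numbers. For parity, recall that $F_m$ is even if and only if $3 \mid m$. So $F_{n+1}$ is odd exactly when $n+1 \not\equiv 0 \pmod 3$, that is, when $n \not\equiv 2 \pmod 3$, which is the hypothesis.

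The small cases need separate treatment. If $n = 1$, then $F_n = 1$, and $(\Gamma(F_2, F_1^i))_i = (\Gamma(1,1))_i$ is constant (trivially, by the remark after Theorem \ref{orithm}). If $n$ is odd with $n \ge 3$, both $F_n$ and $F_{n+1}$ are at least $2$. Theorem \ref{pow-per} then shows that $(\Gamma(F_{n+1}, F_n^i))_{i \ge 1}$ is periodic with period dividing $\mathrm{ord}_{F_{n+1}}(F_n)$.

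The main step is to show that $\mathrm{ord}_{F_{n+1}}(F_n)$ divides $4$, and in fact divides $2$ for odd $n$. Cassini's identity gives $F_{n+1}F_{n-1} - F_n^2 = (-1)^n$, so
\[
F_n^2 \equiv -(-1)^n = (-1)^{n+1} \pmod{F_{n+1}}.
\]
For odd $n$ this reads $F_n^2 \equiv 1 \pmod{F_{n+1}}$. Hence $\mathrm{ord}_{F_{n+1}}(F_n) \in \{1, 2\}$. This is exactly where the oddness of $n$ is used; for even $n$ the order would be $4$.

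Since the minimal period divides $\mathrm{ord}_{F_{n+1}}(F_n) \in \{1,2\}$, it is either $1$, in which case the sequence is constant, or $2$. This proves the corollary.

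The only potential subtlety is the degenerate case $n = 1$ and making sure Theorem \ref{pow-per} applies with $b = F_n \ge 2$. Both are immediate once separated out, so I expect no real obstacle beyond invoking Cassini's identity correctly.
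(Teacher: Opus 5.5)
Your proposal is correct and follows the paper's proof essentially step for step. You treat $n=1$ separately, apply Theorem~\ref{pow-per} with $a=F_{n+1}$ (odd because $n\not\equiv 2 \pmod 3$) and $b=F_n$, and use Cassini's identity to get $F_n^2\equiv 1 \pmod{F_{n+1}}$, so the period divides $2$; the paper states the order is exactly $2$, which also holds since $1<F_n<F_{n+1}$ for $n\ge 3$, but your weaker ``order divides $2$'' already suffices.
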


\begin{proof}
    If $n = 1$, then $(\Gamma(F_{n+1}, F_{n}^i))_{i=1}^\infty$ is constantly $0$.  Assume that $n\ge 3$. Let $a = F_{n+1}$ and $b = F_{n}$ in Theorem \ref{pow-per}. Since $n\not\equiv 2\mod 3$, we know that $F_{n+1}$ is odd. Furthermore, from Cassini's identity, $F_{n+1}F_{n-1} - F_n^2 = (-1)^n$, we obtain 
    \begin{equation}F_n^2 \ =\ F_{n+1}F_{n-1} + (-1)^{n+1}\ \equiv\ 1\mod F_{n+1},\end{equation}
    so $\mbox{ord}_{F_{n+1}}(F_n) = 2$. 
\end{proof}

\begin{exa}\normalfont
    The sequence $(\Gamma(F_4, F_{3}^i))_{i=1}^\infty = (\Gamma(3, 2^i))_{i=1}^\infty$ is
    $1, 0, 1, 0, \ldots$. Meanwhile, the sequence $(\Gamma(F_8, F_7^i))_{i=1}^\infty$ is constantly $0$. 
\end{exa}

\subsection{When $b\equiv \pm 1\mod a$}
\begin{thm}\label{ca+1}
    For $a, c\in \mathbb{N}$,
    \begin{equation}\Gamma(a, ca+1) \ =\ 
    \begin{cases} 
        1, & \text{if } a \text{ is even and } c \text{ is odd}, \\ 
        0, & \text{otherwise.} 
    \end{cases}\end{equation}
\end{thm}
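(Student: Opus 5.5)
We will prove the formula for $\Gamma(a,ca+1)$ with Theorem \ref{master_key}, splitting on the parity of $a$; the only work is to compute the relevant modular inverse. First we dispose of the degenerate case $a=1$. Then $(a-1)(b-1)/2=0$, so \eqref{ee1} has the solution $(0,0)$ and $\Gamma=0$; this agrees with the formula, since $a=1$ is odd. From now on assume $a\ge 2$, so $b=ca+1\ge 3$ and $\gcd(a,b)=1$, and Theorem \ref{master_key} applies.

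\textbf{Case $a$ odd.} By Theorem \ref{master_key}(1), $\Gamma=0$ if and only if $\Theta'=\Theta(b,a)$ is odd. Since $b=ca+1\equiv 1\mod a$, the inverse of $b$ modulo $a$ is $1$, so $\Theta'=1$. This is odd, hence $\Gamma=0$, as claimed.

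\textbf{Case $a$ even.} By Theorem \ref{master_key}(2), $\Gamma=0$ if and only if $\Theta=\Theta(a,b)$ is odd. We need $\Theta\in[1,b-1]$ with $a\Theta\equiv 1\mod b$. From $ca\equiv -1\mod b$ we get $a(-c)\equiv 1\mod b$, so
\begin{equation}
\Theta\ \equiv\ -c\ \equiv\ b-c\ =\ ca+1-c \mod b.
\end{equation}
The value $ca+1-c$ lies in $[1,b-1]$, because $c\ge 1$ and $a\ge 2$ give $ca-c+1\ge 1$. Therefore $\Theta=c(a-1)+1$. Since $a-1$ is odd, this has the parity of $c+1$: $\Theta$ is odd exactly when $c$ is even. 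Hence $\Gamma=0$ when $c$ is even and $\Gamma=1$ when $c$ is odd, which is the stated formula.

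No step here is a real obstacle. The one point needing care is confirming that the representative $b-c$ lies in the range $[1,b-1]$ required by the definition of $\Theta$, together with the edge case $a=1$ handled at the start.
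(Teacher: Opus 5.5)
Your proof is correct and follows essentially the same route as the paper. Both handle $a=1$ separately, then apply Theorem \ref{master_key}, using $\Theta'=1$ for odd $a$ and $\Theta=b-c=c(a-1)+1$ for even $a$; your range check on $b-c$ is the paper's observation that $0<c<b$.
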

 
\begin{proof}
If $a = 1$, then $\Gamma(a, ca+1) = \Gamma(1, c+1) = 0$ because $1\cdot 0 + (c+1)\cdot 0 = (1-1)(c+1-1)/2$. Assume that $a\ge 2$.
    Let $b = ca+1$. Since $\gcd(a,b) = 1$ and $b\equiv 1\mod a$, 
    we have $\Theta'\ =\ 1$,
    which is odd. 

    \bigskip
    
    \noindent \textbf{Case 1:} If $a$ is odd, Theorem~\ref{master_key} gives $\Gamma=0$.

    \bigskip
    
    \noindent \textbf{Case 2:} Suppose that $a$ is even. Multiplying both sides of $ac\equiv-1\mod b$ by $\Theta$ yields
    \begin{equation}
    c\ \equiv\ -\Theta\mod b,\quad \mbox{i.e., } \quad \Theta\ \equiv\ b-c \mod b. 
    \end{equation}
    It follows from $0<c<b$ that
    \begin{equation}
    \Theta\ =\ b-c\ =\ ca+1-c\ =\ c(a-1)+1.
    \end{equation}
    Since $a$ is even, $a-1$ is odd, so $\Theta$
    is odd if and only if $c$ is even. Therefore, by Theorem \ref{master_key}, $\Gamma=0$ if and only if $c$ is even.
\end{proof}

\begin{cor}\label{c1} For odd $n\ge 3$, 
    \begin{equation}\label{e60}\Gamma(F_n^2, F_{n+1}) \ =\ \begin{cases} 
        1, & \text{if } n\equiv 2\mod 3, \\ 
        0, & \text{otherwise.} 
    \end{cases}\end{equation}
\end{cor}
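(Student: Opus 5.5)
The plan is to reduce to Theorem \ref{ca+1} by swapping the roles of the two arguments. Equations \eqref{ee1} and \eqref{ee2} are symmetric in $a$ and $b$: interchanging $a$ and $b$ just interchanges $x$ and $y$. Hence $\Gamma(a,b)=\Gamma(b,a)$ for all coprime $a,b$. In particular,
\[
\Gamma(F_n^2,F_{n+1})\ =\ \Gamma(F_{n+1},F_n^2),
\]
and we aim to write $F_n^2$ in the form $cF_{n+1}+1$.

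Cassini's identity gives $F_{n+1}F_{n-1}-F_n^2=(-1)^n$. Since $n$ is odd, this reads $F_n^2=F_{n-1}F_{n+1}+1$. So with
\[
a\ =\ F_{n+1},\qquad c\ =\ F_{n-1},
\]
we have $F_n^2=ca+1$. Both $a$ and $c$ lie in $\mathbb{N}$ because $n\ge 3$ gives $F_{n-1}\ge 1$. Theorem \ref{ca+1} then yields that $\Gamma(F_{n+1},F_n^2)=1$ exactly when $F_{n+1}$ is even and $F_{n-1}$ is odd, and $0$ otherwise. Coprimality is automatic, since $\gcd(F_n,F_{n+1})=1$.

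It remains to translate the parity conditions, using the standard fact that $F_m$ is even if and only if $3\mid m$. This follows from the Fibonacci sequence modulo $2$ having period $3$: $1,1,0,\dots$. Then $F_{n+1}$ is even iff $n\equiv 2\mod 3$. In that case $n-1\equiv 1\mod 3$, so $F_{n-1}$ is automatically odd. Hence $\Gamma=1$ iff $n\equiv 2\mod 3$, which is \eqref{e60}.

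There is no real obstacle here. The only point needing care is the symmetry step $\Gamma(a,b)=\Gamma(b,a)$, so that Theorem \ref{ca+1} applies even though there the modulus is the first argument. The sign in Cassini's identity also has to be checked, and it is correct precisely because $n$ is odd.
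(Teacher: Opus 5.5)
Your proof is correct and is essentially the paper's argument: take $a=F_{n+1}$ and $c=F_{n-1}$ in Theorem~\ref{ca+1}, use Cassini's identity for odd $n$ to get $ca+1=F_n^2$, and translate the parity conditions via $2\mid F_m \iff 3\mid m$. The only difference is that you state the symmetry $\Gamma(a,b)=\Gamma(b,a)$ explicitly, whereas the paper uses it silently when it identifies $\Gamma(F_{n+1},F_n^2)$ with $\Gamma(F_n^2,F_{n+1})$.
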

\begin{proof}
    Let $a = F_{n+1}$ and $c = F_{n-1}$ in Theorem \ref{ca+1}. Since $n$ is odd,  Cassini's identity gives
$ca+1 = F_{n-1}F_{n+1} + 1 = F_n^2$.  Then \eqref{e60} follows from Theorem \ref{ca+1} because $2|F_{n+1}$ if and only if $n\equiv 2\mod 3$, in which case, 
$2\nmid F_{n-1}$.
\end{proof}

\begin{thm}\label{ca-1}
    For $a\ge 2$ and $c\ge 2$,
    \begin{equation}
    \Gamma(a, ca-1) \ = \ \begin{cases}
        0, & \text{if } a \text{ is even and } c \text{ is odd}, \\
        1, & \text{otherwise}.
    \end{cases}
\end{equation}
\end{thm}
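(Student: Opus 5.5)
Set $b = ca-1$. Since $b\equiv -1 \mod a$, we have $\gcd(a,b)=1$, and $b\ge 2a-1\ge 3$, so Theorem \ref{master_key} applies. The plan parallels the proof of Theorem \ref{ca+1}: compute $\Theta' = \Theta(b,a)$ and $\Theta=\Theta(a,b)$ explicitly, then read off their parities.

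\textbf{Case 1: $a$ odd.} Since $b\equiv -1\mod a$, the inverse of $b$ modulo $a$ is $a-1$, so $\Theta' = a-1$. This lies in $[1,a-1]$ because $a\ge 2$. As $a$ is odd, $\Theta'$ is even, and Theorem \ref{master_key}(1) gives $\Gamma = 1$. This agrees with the claim, since ``$a$ even and $c$ odd'' fails.

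\textbf{Case 2: $a$ even.} We need $\Theta$. From $ca = b+1\equiv 1 \mod b$, we get $\Theta \equiv c \mod b$. Also $1\le c \le ca-2 = b-1$, which holds because $a\ge 2$ and $c\ge 2$ give $ca-c\ge c\ge 2$. Hence $\Theta = c$ exactly. By Theorem \ref{master_key}(2), $\Gamma=0$ if and only if $c$ is odd, which is precisely the claimed formula.

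There is no real obstacle here. The only points needing care are the range checks showing the residues $a-1$ and $c$ are the canonical representatives in $[1,a-1]$ and $[1,b-1]$; this is where the hypotheses $a,c\ge 2$ are used. One should also note that $b$'s parity plays no role in Case 1, since only the parity of $\Theta'$ matters there.
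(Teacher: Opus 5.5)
Your proposal is correct and matches the paper's proof essentially step for step. For odd $a$, $\Theta'=a-1$ is even, so Theorem \ref{master_key}(1) gives $\Gamma=1$. For even $a$, $ac\equiv 1 \mod b$ together with $c<b$ gives $\Theta=c$, so Theorem \ref{master_key}(2) gives $\Gamma=0$ exactly when $c$ is odd.
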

 
\begin{proof}
Let $b = ca-1$.
Since $\gcd(a,b) = 1$ and $b\equiv -1\mod a$, we have $\Theta' =a-1$.

\bigskip
    
\noindent \textbf{Case 1:} If $a$ is odd, then $a-1$ is even, so Theorem~\ref{master_key} gives $\Gamma=1$.

\bigskip
    
\noindent \textbf{Case 2:} Suppose that $a$ is even. Since $ac\equiv1\mod b$, the multiplicative inverse of $a$ modulo $b$ is represented by $c$. Furthermore,
    \begin{equation}
    b-c\ =\ c(a-1)-1>0,
    \end{equation}
    so $2\le c<b$. Hence, we have $\Theta=c$.
    By Theorem \ref{master_key}, $\Gamma=0$ if and only if $c$ is odd.
\end{proof}

\begin{cor}\label{c2} For even $n\ge 4$, 
\begin{equation}\label{e1}\Gamma(F_{n+1}, F_n^2) \ =\ \begin{cases}0, &\mbox{ if }n \equiv 2\mod 3,\\ 1,&\mbox{otherwise}.\end{cases}\end{equation}
\end{cor}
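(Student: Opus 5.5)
This mirrors the proof of Corollary \ref{c1}, now using Theorem \ref{ca-1}. We take $a = F_{n+1}$ and $c = F_{n-1}$. For even $n$, Cassini's identity $F_{n+1}F_{n-1}-F_n^2 = (-1)^n = 1$ gives
\begin{equation}ca-1\ =\ F_{n-1}F_{n+1}-1\ =\ F_n^2.\end{equation}
Hence $\Gamma(F_{n+1},F_n^2) = \Gamma(a, ca-1)$.

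Next we check the hypotheses of Theorem \ref{ca-1}, which needs $a\ge 2$ and $c\ge 2$. Since $n\ge 4$, we have $F_{n+1}\ge F_5 = 5$ and $F_{n-1}\ge F_3 = 2$, so both hold. This lower bound is exactly why the statement requires $n\ge 4$ rather than $n\ge 2$: at $n=2$ we would get $c = F_1 = 1$.

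It remains to read off the parity conditions. Theorem \ref{ca-1} gives $\Gamma = 0$ exactly when $a = F_{n+1}$ is even and $c = F_{n-1}$ is odd, and $\Gamma = 1$ otherwise. We use the standard fact that $2 \mid F_m$ if and only if $3\mid m$.
\begin{itemize}
\item If $n\equiv 2\mod 3$, then $n+1\equiv 0\mod 3$, so $F_{n+1}$ is even. Also $n-1\equiv 1\mod 3$, so $F_{n-1}$ is odd. Thus $\Gamma=0$.
\item If $n\not\equiv 2\mod 3$, then $3\nmid n+1$, so $F_{n+1}$ is odd and $\Gamma = 1$.
\end{itemize}
Together these give \eqref{e1}.

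No step is a real obstacle; the proof is a direct substitution. The only points that need care are the sign in Cassini's identity for even $n$, which must give $ca-1$ and not $ca+1$, and confirming $c\ge 2$.
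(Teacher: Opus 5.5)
Your proposal is correct and matches the paper's proof: substitute $a=F_{n+1}$ and $c=F_{n-1}$ into Theorem \ref{ca-1}, use Cassini's identity for even $n$ to get $ca-1=F_n^2$, and read off the parities from $2\mid F_m \iff 3\mid m$. You also check explicitly that $a\ge 2$ and $c=F_{n-1}\ge 2$, which explains the restriction $n\ge 4$; the paper leaves this hypothesis check implicit.
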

\begin{proof}
Let $a = F_{n+1}$ and $c = F_{n-1}$ in Theorem \ref{ca-1}. Since $n$ is even,  Cassini's identity gives
$ca-1 = F_{n-1}F_{n+1} - 1 = F_n^2$.  Then \eqref{e1} follows from Theorem \ref{ca-1} because $2|F_{n+1}$ if and only if $n\equiv 2\mod 3$, in which case, 
$2\nmid F_{n-1}$.
\end{proof}

\subsection{A relation between $\Gamma(a, a+i)$ and $\Gamma(a^2, (a+i)^2)$, with $i\in \{1, 2\}$}

We obtain initial results concerning a problem of Chu, Miller, and Tresch \cite[Problem 1.3]{CMT}, which asks for a relation between $\Gamma(a,b)$ and $\Gamma(a^2,b^2)$. In particular, Theorems \ref{thm_square_1} and \ref{thm_square_2} address the cases $b=a+1$ and $b=a+2$, respectively. At present, however, we have not identified a convenient way to determine $\Gamma(a^2,b^2)$ from $\Gamma(a,b)$.

\begin{thm}\label{thm_square_1}
For every integer $a \ge 2$, we have
\begin{equation}\Gamma(a, a+1)\ \neq\ \Gamma(a^2, (a+1)^2).\end{equation}
\end{thm}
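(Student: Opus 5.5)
Our plan is to compute both sides explicitly: $\Gamma(a,a+1)$ comes from Theorem \ref{ca+1}, and $\Gamma(a^2,(a+1)^2)$ from Theorem \ref{master_key}. Since $\gcd(a^2,(a+1)^2)=1$ and both entries are at least $2$, Theorem \ref{master_key} applies to the squared pair. For the unsquared pair, Theorem \ref{ca+1} with $c=1$ gives $\Gamma(a,a+1)=1$ if $a$ is even and $\Gamma(a,a+1)=0$ if $a$ is odd. It then suffices to show $\Gamma(a^2,(a+1)^2)=0$ for even $a$ and $=1$ for odd $a$. The only real work is finding the relevant modular inverses in closed form, which we do using the nilpotent tricks $a^2\equiv 0$ and $(a+1)^2\equiv 0$.

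\textbf{Odd $a$.} Here $a^2$ is odd, so by Theorem \ref{master_key}(1) we need the parity of $\Theta((a+1)^2,a^2)$, the inverse of $(a+1)^2$ modulo $a^2$. Since
\begin{equation}
(a+1)(1-a)\ =\ 1-a^2\ \equiv\ 1 \mod a^2,
\end{equation}
the inverse of $(a+1)^2$ is $(1-a)^2=(a-1)^2$. Because $a\ge 3$, we have $1\le (a-1)^2\le a^2-1$, so this is exactly $\Theta((a+1)^2,a^2)$. As $a$ is odd, $(a-1)^2$ is even, hence $\Gamma(a^2,(a+1)^2)=1\neq 0=\Gamma(a,a+1)$.

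\textbf{Even $a$.} Now $a^2$ is even, so by Theorem \ref{master_key}(2) we need the parity of $\Theta(a^2,(a+1)^2)$. Write $B=a+1$, so that $a=B-1$ and $B^2\equiv 0$ modulo $(a+1)^2$. Then
\begin{equation}
(B-1)(-1-B)\ =\ 1-B^2\ \equiv\ 1 \mod B^2,
\end{equation}
so $a^{-1}\equiv -(a+2)$ and $a^{-2}\equiv (a+2)^2$. Moreover,
\begin{equation}
(a+2)^2\ =\ (a+1)^2+2(a+1)+1\ \equiv\ 2a+3 \mod (a+1)^2.
\end{equation}
Since $a\ge 2$ gives $1\le 2a+3<(a+1)^2$, we get $\Theta=2a+3$, which is odd. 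Hence $\Gamma(a^2,(a+1)^2)=0\neq 1=\Gamma(a,a+1)$.

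The main step to get right is confirming that these explicit residues lie in the normalized range $[1,\text{modulus}-1]$, since the parity criterion of Theorem \ref{master_key} depends on the actual representative and not just the residue class. That check reduces to the elementary inequalities $(a-1)^2<a^2$ and $2a+3<(a+1)^2$ for $a\ge 2$.
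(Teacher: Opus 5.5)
Your proof is correct, but it takes a different route from the paper. The paper computes no modular inverses. For each parity of $a$ it writes down explicit nonnegative solutions: for instance $(x,y)=\bigl(\frac{a^2-3}{2},\,a-1\bigr)$ for $1+a^2x+(a+1)^2y=\frac{(a^2-1)((a+1)^2-1)}{2}$ when $a$ is odd, and $(x,y)=\bigl(a+1,\,\frac{a(a-2)}{2}\bigr)$ for $a^2x+(a+1)^2y=\frac{(a^2-1)((a+1)^2-1)}{2}$ when $a$ is even. It does the same for the pair $(a,a+1)$, then concludes from the ``exactly one'' part of Theorem~\ref{orithm}.

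You instead obtain $\Gamma(a,a+1)$ from Theorem~\ref{ca+1} with $c=1$, and $\Gamma(a^2,(a+1)^2)$ from the parity criterion of Theorem~\ref{master_key}. You use $(1+a)(1-a)=1-a^2$ to get the inverses $(a-1)^2$ modulo $a^2$ and $2a+3$ modulo $(a+1)^2$ in closed form. Your range checks are exactly what is needed, since in both cases the modulus is odd and the parity depends on the normalized representative.

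The paper's argument is more self-contained, using only Theorem~\ref{orithm} and polynomial identities, and it produces the unique solutions, but it requires guessing them. Yours requires no guessing, explains where the parity flip comes from, and gives a systematic recipe (invert, normalize, read off parity) that can be tried on other pairs. The cost is that it relies on the external characterization in Theorem~\ref{master_key}. The two approaches agree: Corollary~\ref{solexist} with $k=2$ and $\Theta=2a+3$ returns exactly the paper's witness $\bigl(a+1,\frac{a(a-2)}{2}\bigr)$ for even $a$.
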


\begin{proof}
Since $\gcd(a,a+1)=\gcd(a^2, (a+1)^2)=1$, Theorem \ref{orithm} applies.

First, we consider the equations $\begin{cases}ax + (a+1)y = (a-1)a/2, \\ 1 + ax + (a+1)y = (a-1)a/2.\end{cases}$

\bigskip

If $a$ is odd, then $(a-1)/2$ is a nonnegative integer, and
\begin{equation}
a\cdot \frac{a-1}{2}+(a+1)\cdot 0 \ =\ \frac{(a-1)a}{2},
\end{equation}
so $\Gamma(a,a+1)=0$.

\bigskip

If $a$ is even, then $(a-2)/2$ is a nonnegative integer because $a\ge 2$, and
\begin{equation}
1+a\cdot 0+(a+1)\cdot\frac{a-2}{2}
\ =\ \frac{a^2-a}{2}
\ =\ \frac{(a-1)a}{2},
\end{equation}
so $\Gamma(a,a+1)=1$.

\bigskip

Next, we consider the equations $\begin{cases}a^2 x + (a+1)^2 y  = (a^2-1)((a+1)^2-1)/2,\\ 1 + a^2 x + (a+1)^2 y  = (a^2-1)((a+1)^2-1)/2.\end{cases}$
Note that
\begin{equation}\frac{(a^2-1)((a+1)^2-1)}{2}\ =\ \frac{(a-1)(a+1)a(a+2)}{2}.\end{equation}

\bigskip

If $a$ is odd, then $a\ge 3$ because $a\ge 2$. We have
\begin{align}
1+a^2\cdot\frac{a^2-3}{2}+(a+1)^2\cdot (a-1)&\ =\ \frac{a^4+2a^3-a^2-2a}{2}\nonumber\\
&\ =\ \frac{(a-1)(a+1)a(a+2)}{2},
\end{align} 
so $\Gamma(a^2, (a+1)^2) = 1$.

\bigskip

If $a$ is even, then
\begin{align}
a^2\cdot (a+1) + (a+1)^2\cdot\frac{a(a-2)}{2}&\ =\ a(a+1)\left(a+\frac{(a+1)(a-2)}{2}\right)\nonumber\\
&\ =\ \frac{(a-1)(a+1)a(a+2)}{2}.
\end{align}
Hence, $\Gamma(a^2,(a+1)^2)=0$.
\end{proof}

\begin{thm}\label{thm_square_2}
Let $a\ge 3$ be odd. Then
\begin{equation}\begin{cases}
    \Gamma(a,a+2) \ =\ 0\ \mbox{ and }\ \Gamma(a^2,(a+2)^2)\ =\ 0, &\mbox{ if }a\equiv 1 \mod 8,\\
    \Gamma(a,a+2) \ =\ 1\ \mbox{ and }\ \Gamma(a^2, (a+2)^2)\ =\  1, &\mbox{ if }a\equiv 3\mod 8,\\
    \Gamma(a, a+2) \ =\ 0\ \mbox{ and }\ \Gamma(a^2, (a+2)^2) \ =\ 1, &\mbox{ if }a\equiv 5\mod 8,\\
    \Gamma(a, a+2) \ =\ 1\ \mbox{ and }\ \Gamma(a^2, (a+2)^2) \ =\ 0, &\mbox{ if }a\equiv 7\mod 8.
\end{cases}\end{equation}
\end{thm}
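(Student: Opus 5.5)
By Theorem \ref{master_key}, since $a$ is odd (and so is $a^2$), we have $\Gamma(a,b)=0$ if and only if $\Theta(b,a)$ is odd, and likewise $\Gamma(a^2,(a+2)^2)=0$ if and only if $\Theta((a+2)^2,a^2)$ is odd. The plan is therefore to compute both inverses explicitly as integers in the correct range and read off their parity according to $a \bmod 8$. Note that $\gcd(a,a+2)=1$ because $a$ is odd, and $a\ge 3$, so the hypotheses $a,b\ge 2$ of Theorem \ref{master_key} hold in both cases.

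\textbf{First step.} Since $a+2\equiv 2 \mod a$, the inverse $\Theta(a+2,a)$ is the inverse of $2$ modulo $a$. This is $(a+1)/2$, which lies in $[1,a-1]$. It is odd exactly when $a\equiv 1\mod 4$. Hence
\begin{equation*}
\Gamma(a,a+2)\ =\ \begin{cases}0,&\mbox{ if } a\equiv 1,5 \mod 8,\\ 1,&\mbox{ if } a\equiv 3,7\mod 8.\end{cases}
\end{equation*}

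\textbf{Second step.} Modulo $a^2$ we have $(a+2)^2=a^2+4a+4\equiv 4(a+1)$. Because $(1+a)(1-a)=1-a^2\equiv 1\mod a^2$, the inverse of $a+1$ is $1-a$. So I seek the unique $t\in[1,a^2-1]$ with
\begin{equation*}
4t\ \equiv\ 1-a\mod a^2 .
\end{equation*}
I write $t=(1-a+ja^2)/4$ with an integer $j$. Divisibility by $4$, together with $a^2\equiv 1\mod 4$, forces $j\equiv a-1\mod 4$. Positivity of $t$ (recall $1-a<0$) together with $t<a^2$ forces $1\le j\le 4$. This gives two explicit formulas:
\begin{equation*}
t\ =\ a^2+\frac{1-a}{4}\quad\mbox{if } a\equiv 1\mod 4,\qquad t\ =\ \frac{2a^2-a+1}{4}\quad\mbox{if } a\equiv 3\mod 4.
\end{equation*}

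\textbf{Parity.} For $a\equiv 1\mod 4$, the number $a^2$ is odd, so $t$ is odd iff $(1-a)/4$ is even, i.e.\ iff $a\equiv 1\mod 8$. Thus $\Gamma(a^2,(a+2)^2)=0$ for $a\equiv 1\mod 8$ and $=1$ for $a\equiv 5 \mod 8$. For $a\equiv 3\mod 4$, I determine $2a^2-a+1 \bmod 8$ using $a\bmod 8$. When $a\equiv 3\mod 8$, the number $a^2$ is $\equiv 9 \mod 16$, so $2a^2\equiv 18\mod 32$. Writing $a=3+8s$ then gives $2a^2-a+1\equiv 16 \mod 32$ (up to checking the $s$-dependent term), so $t$ is even and $\Gamma=1$. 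When $a\equiv 7\mod 8$, a similar computation, or the test value $a=7$ giving $t=23$, yields $t$ odd and $\Gamma=0$. Combining with the first step gives the four lines of the theorem.

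The main obstacle is this last parity bookkeeping for $t=(2a^2-a+1)/4$. One needs the numerator modulo $8$ after dividing by $4$, which requires $2a^2-a+1$ modulo $32$. I would handle it by substituting $a=8s+3$ and $a=8s+7$ and expanding directly: the $s$-dependent terms are multiples of $32$, so only the constant terms $16$ and $92$ matter. Since $16/4=4$ is even and $92/4=23$ is odd, this confirms $t$ even for $a\equiv 3$ and odd for $a\equiv 7 \mod 8$.
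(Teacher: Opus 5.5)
Your approach is correct and its conclusions are right, but the final parity step contains a false arithmetic claim that needs a one-line repair.

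\textbf{The slip.} For $a=8s+3$ you have $2a^2-a+1=128s^2+88s+16$, and for $a=8s+7$ you have $128s^2+216s+92$. The terms $88s$ and $216s$ are \emph{not} multiples of $32$. For example, $a=11$ gives $2a^2-a+1=232\equiv 8\mod 32$, not $16$. You also don't need them to be, because you only need the parity of $t$, not $t$ modulo $8$. That parity is determined by $4t=2a^2-a+1$ modulo $8$. Since $a^2\equiv 1\mod 8$ for odd $a$, you get $2a^2-a+1\equiv 3-a\mod 8$. This is $0$ when $a\equiv 3\mod 8$ and $4$ when $a\equiv 7\mod 8$, so $t$ is even, respectively odd, as you claimed. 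Explicitly, $t=32s^2+22s+4$ and $t=32s^2+54s+23$. With this fix the proof is complete. The test value $a=7$ should not be part of the argument. Your first step and the case $a\equiv 1\mod 4$ are fine as written.

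\textbf{Comparison with the paper.} The paper never computes an inverse. For each residue class of $a$ modulo $8$ it writes down an explicit nonnegative solution and checks it by polynomial algebra. For instance, for $a=8n+3$ it uses $1+a^2(16n^2+19n+5)+(a+2)^2(16n^2+13n+2)=(a-1)(a+1)^2(a+3)/2$. Theorem \ref{orithm} then rules out the other equation.
\begin{itemize}
\item The paper's route is self-contained given Theorem \ref{orithm} and yields the unique solutions themselves. However, the eight solution formulas must be found by other means, and checking them does not explain why the answer depends on $a$ modulo $8$.
\item Your route through Theorem \ref{master_key} is systematic. The modulus $4$ for $\Gamma(a,a+2)$ comes from inverting $2$ modulo $a$. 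The modulus $8$ for the squares comes from inverting $4(a+1)$ modulo $a^2$ and dividing by $4$. It also adapts readily to pairs $(a,a+c)$ with other small $c$.
\item The cost of your route is reliance on the cited parity characterization, which the paper recovers from Corollary \ref{solexist}, and the loss of the explicit solutions.
\end{itemize}
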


\begin{proof}
Since $\gcd(a,a+2)=\gcd(a^2,(a+2)^2)=1$, Theorem \ref{orithm} applies. Consider the equations
$
\begin{cases}
ax+(a+2)y =   (a-1)(a+1)/2,\\
1+ax+(a+2)y =   (a-1)(a+1)/2.
\end{cases}
$

\bigskip

If $a=8n+1$, then
\begin{equation}(8n+1)\cdot 2n+(8n+3)\cdot 2n
\ =\ 32n^2+8n
\ =\ \frac{8n(8n+2)}2
\ =\ \frac{(a-1)(a+1)}2,\end{equation}
so $\Gamma(a,a+2)=0$.

\bigskip

If $a=8n+3$, then
\begin{align}
1+(8n+3)\cdot (2n+1)+(8n+5)\cdot 2n
&\ =\ 32n^2+24n+4\nonumber\\
&\ =\ \frac{(8n+2)(8n+4)}2
\ =\ \frac{(a-1)(a+1)}2,
\end{align}
so $\Gamma(a,a+2)=1$.

\bigskip
 
If $a=8n+5$, then
\begin{align}
(8n+5)\cdot (2n+1)+(8n+7)\cdot (2n+1)
&\ =\ 32n^2+40n+12\nonumber\\
&\ =\ \frac{(8n+4)(8n+6)}2
\ = \ \frac{(a-1)(a+1)}2,
\end{align}
so $\Gamma(a,a+2)=0$.

\bigskip

If $a=8n+7$, then
\begin{align}
1+(8n+7)\cdot (2n+2)+(8n+9)\cdot (2n+1)
&\ =\ 32n^2+56n+24\nonumber\\
&\ =\ \frac{(8n+6)(8n+8)}2\nonumber\\
&\ =\ \frac{(a-1)(a+1)}2,
\end{align}
so $\Gamma(a,a+2)=1$.

\bigskip

Next, we consider the equations 
$\begin{cases}
a^2x+(a+2)^2y = (a^2-1)((a+2)^2-1)/2,\\
1+a^2x+(a+2)^2y = (a^2-1)((a+2)^2-1)/2.
\end{cases}$
Note that
\begin{equation}
\frac{(a^2-1)((a+2)^2-1)}2
\ =\
\frac{(a-1)(a+1)^2(a+3)}2.
\end{equation}

\bigskip

If $a=8n+1$, then
\begin{align}
a^2\cdot n+(a+2)^2\cdot (32n^2+7n)
 &\ =\ 64n(4n+1)^2(2n+1)\nonumber\\
 &\ =\ \frac{8n(8n+2)^2(8n+4)}{2}\nonumber\\
 &\ =\ \frac{(a-1)(a+1)^2(a+3)}{2},
\end{align}
so $\Gamma(a^2,(a+2)^2)=0$.

\bigskip

If $a=8n+3$, then
\begin{align}
&1+a^2\cdot (16n^2+19n+5)+(a+2)^2\cdot (16n^2+13n+2)\nonumber\\
 &\ =\ 32(4n+1)(2n+1)^2(4n+3)\nonumber\\
 &\ =\ \frac{(8n+2)(8n+4)^2(8n+6)}{2}\nonumber\\
 &\ =\ \frac{(a-1)(a+1)^2(a+3)}{2},
\end{align}
so $\Gamma(a^2,(a+2)^2)=1$.

\bigskip

If $a=8n+5$, then
\begin{align}
   & 1+a^2\cdot (32n^2+55n+23)+(a+2)^2\cdot n\nonumber\\
 &\ =\ 64(2n+1)(4n+3)^2(n+1)\nonumber\\
 &\ =\ \frac{(8n+4)(8n+6)^2(8n+8)}{2}\nonumber\\
 &\ =\ \frac{(a-1)(a+1)^2(a+3)}{2},
\end{align}
so $\Gamma(a^2,(a+2)^2)=1$.

\bigskip

If $a=8n+7$, then
\begin{align}
&a^2\cdot(16n^2+37n+21)+(a+2)^2\cdot(16n^2+27n+11)\nonumber\\
 &\ =\ 128(4n+3)(n+1)^2(4n+5)\nonumber\\
 &\ =\ \frac{(8n+6)(8n+8)^2(8n+10)}{2}\nonumber\\
 &\ =\ \frac{(a-1)(a+1)^2(a+3)}{2},
\end{align}
so $\Gamma(a^2,(a+2)^2)=0$.
\end{proof}


\ \\
\end{document}